\theoremstyle{plain}
\newtheorem{theorem}{Theorem}[section]
\newtheorem*{theorem*}{Theorem}
\newtheorem{definition}[theorem]{Definition}
\newtheorem{lemma}[theorem]{Lemma}
\newtheorem{corollary}[theorem]{Corollary}
\newtheorem{proposition}[theorem]{Proposition}
\newtheorem{fact}[theorem]{Fact}
\theoremstyle{remark}
\newtheorem{remark}[theorem]{Remark}
\newtheorem{notation}[theorem]{Notation}
\newtheorem{recall}[theorem]{Reminder}
\newtheorem{question}[theorem]{Question}
\newcommand{\A}{\mathcal A}
\newcommand{\C}{\mathcal C}
\newcommand{\comp}{\mathbb C}
\newcommand{\F}{\mathcal F}
\newcommand{\I}{\mathcal I}
\newcommand{\U}{\mathcal U}
\newcommand{\eps}{\varepsilon}
\newcommand{\ph}{\varphi}
\newcommand{\reel}{\mathbb R}
\newcommand{\nat}{\mathbb N}
\newcommand{\band}{{\rm band\,}}
\renewcommand{\Re}{\mathrm {Re\,}}
\renewcommand{\Im}{\mathrm {Im\,}}
\newcommand{\To}{\mathop{\longrightarrow}}
\begin{document}

\title[New axiomatizable classes of Banach spaces]{ New axiomatizable classes of Banach spaces via disjointness-preserving isometries}
\author[Y. Raynaud]{Yves Raynaud}
\address{\noindent  Institut de Math\'ematiques de Jussieu-Paris Rive Gauche, CNRS, UPMC (Univ.Paris 06), and Univ. Paris-Diderot, 4 place Jussieu, F-75252 Paris Cedex 05, France}
\email{\texttt{yves.raynaud@upmc.fr}}

\begin{abstract} Let $\C$ be an axiomatizable class of order continuous real or complex Banach lattices, that is, this class is closed under isometric vector lattice isomorphisms and ultraproducts, and the complementary class is closed under ultrapowers. We show that if linear isometric embeddings of members of $\C$ in their ultrapowers preserve disjointness, the class $\C^\mathcal B$ of Banach spaces obtained by forgetting the Banach lattice structure is still axiomatizable. Moreover if $\C$ coincides with its ``script class'' $\mathcal{SC}$, so does $\C^\mathcal B$ with $\mathcal{SC}^\mathcal B$. This allows us to give new examples of axiomatizable classes of Banach spaces, namely certain Musielak-Orlicz spaces, Nakano spaces, and mixed norm spaces. 
\end{abstract}

\date{\today.}
\subjclass[2010]{Primary: 46B42, 46E30;
Secondary: 03C65, 46M07.}
\bigskip
\maketitle
  
In this article we investigate some instances where  the axiomatizability of a given class $\C$ of real or complex Banach lattices implies the axiomatizability of the class $\C^\mathcal B$ of underlying Banach spaces. Here we deal with the following definition of   axiomatizability: a class of Banach spaces (resp. Banach lattices) is axiomatizable if it is closed  under surjective linear isometries (resp. isometric vector lattice isomorphisms), ultraproducts and  ultraroots. By ``ultraroot'' of a Banach space $X$ we mean a  Banach space  $Y$ such that $X$ is (linearly isometric to) an ultrapower of $Y$; and we have the analogous definition for Banach lattices (but using isometric vector lattice isomorphisms).  Note that to say that a class is closed under ultraroots is equivalent to saying that the complementary class is closed under ultrapowers. 

Of course the term ``axiomatizable'' refers to the possibility of characterizing the class under consideration by a list of axioms, but giving a precise meaning to this possibility requires controlling in a precise manner the logical form of the axioms. A theorem in first order logic asserts that a class of structures (sets equipped with distinguished relations and functions) is the class of models of a theory, i.e. is characterized by a list of sentences in the appropriate language,  iff it is closed under isomorphisms, ultraproducts, and ultraroots. In the various theories of metric structures the notion of ultraproduct that revealed itself to be fruitful is different from the notion used in pure logic, and the first order logic to which they are relevant is also specific. Two equivalent versions of this logic exist at the present time, namely Henson's logic of positive bounded formulas and approximate satisfaction \cite{HI} and the more recent continuous logic for metric structures \cite{BBHU} where the set of truth values is the segment $[0,1]$.

In this question of transferring axiomatizability from the Banach lattice setting to the Banach space one, there is no problem with closure by ultraproducts, which transfer clearly; the issue is in closure under ultraroots. We exhibit in this article a sufficient condition ensuring that a class of order continuous Banach lattices that is axiomatizable in the Banach lattice language remains axiomatizable in the Banach space language, namely that every member of this class has the following property: any linear isometry into any of its ultrapowers preserves disjointness -- we call this condition ``property DPIU'' (disjointness preserving isometries into ultrapowers). This is the  content of Theorem \ref{axiomatizability1} below.

The content of the paper is the following. Section \ref{section:Preliminaries} consists of preliminaries on Banach lattices and their ultraproducts, with reminders  about complex Banach lattices, monotone convergence properties,  and  the structure of disjointness preserving isometries (in the context of most general Banach lattices). In Section \ref{section:sublattices-u.s.c.} we define the ``sublattices up to a sign change'' as the image of a closed vector sublattice by a modulus preserving linear map. It is well known that the closed vector sublattices of a given Banach lattice are characterized as the closed linear subspaces containing the modulus of each of their elements. Similarly it turns out that the ``sublattices up to a sign change'' of a given order continuous Banach lattice may be characterized by the fact that they are preserved by the action of a certain homogeneous binary function, introduced in \cite{L}, that we call the Lacey function.  The bounded linear maps that preserve disjointness are exactly those preserving the Lacey function. This function and its invariance properties are introduced in the context of most general Banach lattices, order continuity is used only when analyzing the subspaces preserving the Lacey function. In section \ref{section:sublattices-u.s.c.} the cases of real and complex Banach lattices are treated separately, since the real case is quite a bit simpler. In the following sections however no distinction is made between the two cases, except in certain proofs which refer to results of papers in which only the real case is considered. Note also that the axiomatizability of a class of Banach lattices and that of the class of their respective  complexified Banach lattices are equivalent (see Proposition \ref{axiom-complex-vs-real-BL} below).

In section \ref{section:UBL} we then give the main result (Theorem \ref{L-inverse ultraprod}). Let us say that a Banach space $X$ is paved by a family $\mathcal F=(X_i)_{i\in I}$ of its subspaces (or $\F$ is a paving of $X$) if for every $\eps>0$, every finite subset of $X$ lies in the $\eps$-enlargement of some $X_i$, $i\in I$. Equivalently, there exists an ultrafilter $\U$ on $I$ such that  for every $x\in X$, $d(x,X_i)$ converges to zero with respect to $\U$ (such an ultrafilter will be called ``adapted to $\F$''). This is a nice condition for getting a natural embedding of $X$ in the ultraproduct $\prod_{i,\U} X_i$. Note that our concept of pavings is similar to previous ones in the literature (see e.g. \cite{JLS}), but it is more general, since we do not assume that  subspaces in our pavings are finite dimensional, nor that they form an increasing chain or even a directed set.  Now, Theorem \ref{L-inverse ultraprod} tells that if  the ultrafilter $\U$ is adapted to the paving $(X_i)_{i\in I}$ and if the ultraproduct $\prod_{i,\U} X_i$  is linearly isometric to an order continuous Banach lattice $L$ satisfying DPIU, then the space $X$ itself is linearly isometric to a  sublattice of $L$, that is moreover contractively complemented whenever $L$ does not contain $c_0$. In particular a Banach space  ultraroot of an order continuous Banach lattice $L$ with DPIU is linearly isometric to a sublattice of $L$, which turns out to be  a Banach lattice ultraroot of $L$. This gives the theorem of transfer of axiomatizability announced  above.

Theorem \ref{L-inverse ultraprod} has another application. Given a class $\C$ of Banach spaces,  we say that a Banach space $X$ is $\C$-pavable iff for every $\eps>0$ there is a paving of $X$ consisting of $(1+\eps)$-isomorphic copies of members of $\C$. There is an analogous definition of $\C$-pavable Banach lattices when $\C$ is a class of Banach lattices. Then we prove that if $\C$ is a class of order continuous Banach lattices closed under ultraproducts and contractive projections, the class of $\C$-pavable Banach lattices coincides with $\C$. If moreover all members of $\C$ have DPIU, then $\C^\mathcal B$, the class of Banach spaces linearly isometric with a member of $\mathcal C$, coincides with the class of $\C^\mathcal B$-pavable Banach spaces.  

In section \ref{script-C}  we give an application of Theorem \ref{L-inverse ultraprod} to ``script-$\mathcal C$-classes''. If $\mathcal C$ is a class of Banach lattices (resp. spaces) consider the class $\mathcal{SC}$ consisting of Banach lattices (resp. spaces) which are paved almost isometrically by finite dimensional members of   $\mathcal C$ (``script-$\mathcal C$-spaces''). This is an evident generalization of the concept of  $\mathcal L_p$-spaces (in fact, $\mathcal L_{p,1+}$-spaces); another kind of generalization may be found in \cite{Ri}. Assume that the class $\mathcal{C}$ consists of order continuous Banach lattices with the DPIU property, and is closed under ultraproducts. Then if the equation $\mathcal{SC}=\mathcal C$ is verified, it transfers  to the class $\mathcal C^\mathcal B$ of underlying Banach spaces. In the particular case where $\C$ is the class  of $L_p$-spaces, this was proved by H.E. Lacey in the seminar notes \cite{L}, which inspired directly the present work. Lacey's proof relies on the fact that linear isometric embeddings between $L_p$ spaces which are close to be isometric are also close to be disjointness preserving, a fact which is now encoded in our condition DPIU. Note that the conditions  $\mathcal{SC}=\mathcal C$ and $\C$ closed under ultraproducts imply that $\C$ is axiomatizable -- in fact they imply that members of $\C$ are characterized by a certain kind of quantified paving by finite dimensional members of $\C$ (we give in Corollary \ref{axioms} a list of ``informal" axioms characterizing such a class $\C$, which could be the starting point for a formal axiomatization in Henson's logic or in continuous logic).

Section \ref{r-convex-BL} is devoted to very large classes of Banach lattices with DPIU, consisting of isometrically $r$-convex Banach lattices, $r>2$, with stricly monotone norm. In this class disjointness of two elements is norm-determined, i.e. it is expressible in the Banach space language (by a list of sentences in Henson's logic, of closed conditions in continuous logic). This implies clearly that linear isometries between members of this class are disjointness preserving. A nice axiomatizable subclass  (in the Banach lattice language) is the class $\mathcal L_{r,s}$ of exactly $r$-convex and $s$-concave Banach lattices, $2<r\leq s<\infty$, which is thus axiomatizable in the Banach space language; and so is any Banach lattice axiomatizable subclass of $\mathcal L_{r,s}$. Applications are given for the class $\mathcal{B}L_pL_q$ (of bands in Bochner spaces $L_p(L_q)$), where $2<p,q<\infty$, and $\mathcal N_{r,s}$ (of Nakano spaces with exponent function essentially included in $[r,s]$), $2<r<s<\infty$, which are known to be  Banach lattice axiomatizable \cite{HR,P,PR}. That isometries between members of these classes preserve disjointness was essentially known \cite{GR,JKP}. Note that in both cases the dual classes are automatically Banach space axiomatizable too, although their members may not satisfy DPIU.

In section \ref{sec:other} we investigate classes of the kind $\mathcal{B}L_pL_q$ whose members all satisfy DPIU although $p,q$ are not both in $(2,+\infty)$. Clearly this is not possible if $p<q\le 2$ or if $q=2<p$, since in both cases $L_p(L_q)$ embeds isometrically into $L_p$, (if e.g. $L_p$ has a nonatomic part and $L_q$ is separable) and the embedding is not disjointness preserving. The same happens if $p=1$ and $L_q$ has dimension 2. Similarly if $p=2<q$ or $q<p\le 2$ we may have linear isometries from $L_p$ into $L_q$ which do not preserve disjointness, but in these cases $L_p(L_q)$ does not embed in $L_q$ if $L_q$ is not trivial (of dimension 1). Thus we need to control the dimension of the $L_q$ fibers in the spaces of kind  $\mathcal{B}L_pL_q$ under consideration to ensure that they have property DPIU. An extreme way to do it is to assume that these fibers are atomless, equivalently we consider the class $\mathcal{B}L_pL_q^a$ of bands in $L_p(L_q)$ spaces where $L_q$ is atomless. This class was proved to be axiomatizable in the Banach lattice language in \cite{HR-11}. It turns out that,  except if $p<q\le 2$ or if $q=2\le p$, its members satisfy DPIU, so it is axiomatizable in the Banach space language too. At the end of the section we refine this result by considering the classes $\mathcal{B}L_pL_q^{\ge n}$ where the $L_q$-fibers are at least $n$-dimensional (equivalently members of these classes contain $L_p(\ell_q^n)$ as a closed vector sublattice). These classes are   axiomatizable  as Banach  lattices and for $n\ge 2$ ($n\ge 3$ if $p=1$) they are DPIU, and thus axiomatizable  as Banach spaces in the same cases as the preceding $\mathcal{B}L_pL_q^a$ classes.  

We refer to \cite{MN} and \cite{LT} for Banach lattice theory and to \cite{H} for definitions and facts concerning ultraproducts.

From section \ref{section:UBL} on, all the statements in which the real or complex nature of the spaces under consideration is not specified are valid indifferently for real or complex Banach spaces and lattices.

\subsubsection* {Aknowledgements}The author thanks C. W. Henson for his critical reading of a previous version of the manuscript and his suggestions for improving it.

\section{Preliminaries}\label{section:Preliminaries}

In this section we recall some definitions and facts about real and complex Banach lattices, Banach spaces and Banach lattices ultraproducts, axiomatizability, monotone convergence properties in Banach lattices, and on the structure of disjointness preserving operators.

\subsection{Banach lattices}

\subsubsection{Banach lattices (real case)}
A linear subspace $E$ in a normed vector  lattice $X$ is called a vector sublattice if it is closed under operations $\vee$ (max) and $\wedge$ (min); equivalently $E$ is closed under operation modulus. A linear subspace $E$ is an {\it ideal} if it is solid, i. e. $x\in E$ and $|y|\le |x|$ imply $y\in E$. Given a subset $A$ of $X$ we denote by $\mathcal I_0(A)$ and $\mathcal I(A)$ the ideal generated by $A$, resp.  and  the closed ideal generated by $A$. Among the closed ideals are the bands, i. e. sets of the form $B=A^\perp$ (the set of all elements in $X$ that are  disjoint from a given subset $A$). A band $B$ in $E$ is a projection band if $E$ splits as $E=B\oplus B^\perp$.

A linear map $T: X\to Y$ between real normed lattices $X$ and $Y$ is a {\it vector lattice homomorphism} (in short ``lattice homomorphism'') if it preserves the lattice operations $\vee$ and $\wedge$, i.e. $T(x\vee y)=Tx\vee Ty$ and $T(x\wedge y)=Tx\wedge Ty$; equivalently $T$ preserves the modulus operation, i.e. $|Tx|=T|x|$. A vector lattice homomorphism which is isometric will be called a {\it lattice isometry}. A contrario we shall use the expression  {\it linear isometry} for a map between Banach lattices in order to underline that this linear isometric map is not supposed to be a lattice homomorphism. Two Banach spaces, resp. lattices $X,Y$ are said to be isomorphic if there is a bijective linear map, resp. a lattice homomorphism $T: X\to Y$ such that $T$ and $T^{-1}$ are bounded. 

A subset $A$ of a Banach lattice $X$ is order bounded if there is some $u\in X_+$ such that $|x|\le u$ for every $x\in A$. A linear operator $T:X\to Y$ between Banach lattices $X,Y$ is order bounded if it maps order bounded sets onto order bounded sets.

\subsubsection{Complex Banach lattices}\label{subsubsec:BL-complex}

Complex Banach lattices are usually defined as complexifications of real
Banach lattices, see e.g. \cite[\S 2.2]{MN} or \cite[\S 3.2]{AA}. To each real Banach lattice $X$ 
is associated its complexification $X_\mathbb C = X+iX$. For $z=a+ib$ we set 
$\bar z=a-ib$ and $\Re z=a=\frac 12 (z+\bar z)$. Note that $X$ appears as the real 
linear subspace $\Re X_\mathbb C$ of $X_\mathbb C$ (the ``real part'' of $X_\mathbb C$).  

The modulus of $z=a+ib\in X_\mathbb C$ is defined as $|z|= \bigvee\limits_\theta 
(\cos\theta) a+(\sin\theta) b$, (which always exists in $X$ as limit of a Cauchy net of 
finite suprema) and the norm is defined by $\|z\|=\|\,|z|\,\|_X$. An alternative expression of the modulus of $z=a+ib$ may be given using Krivine's functional calculus \cite[Theorem 1.d.1]{LT} \nobreak by $ |z|=(a^2+b^2)^{1/2}=({z\bar z})^{1/2}$.
The modulus map $z\mapsto |z|$ is sublinear, homogeneous, maps $X_\mathbb C$ onto $X_+$ and coincides on the real Banach lattice $X$ with the absolute value. A useful monotonicity property  of the modulus that may be deduced from one or the other of the two preceding formulas is
\begin{align}\label{modulus-monotonicity}
\forall a_1,a_2,b_1,b_2\in \Re X_\mathbb C,\quad |a_1|\le |a_2| \hbox{ and } |b_1|\le |b_2|\Longrightarrow |a_1+ib_1|\le |a_2+ib_2|
\end{align}

By a closed complex sublattice of $X_\mathbb C$ is meant  the complexification $Y_\mathbb C$ of any real closed sublattice $Y$ of $X$. A lattice homomorphism from a complex Banach lattice into a second one is the complexification $T_\mathbb C$ of a real lattice homomorphism $T$ between the corresponding real Banach lattices, that is $T_\mathbb C(a+ib)=Ta+iTb$. Since $T$ preserves finite supremas and the modulus of each element $z=a+ib$ is a limit of finite supremas of the form $\bigvee\limits_{j=1}^n (\cos\theta_j) a+(\sin\theta_j) b$, it is easy to see that $T_\mathbb C$ preserves modulus operation, i.e. $|T_\mathbb C z|=T|z|=T_\mathbb C|z|$.
 
 In \cite{HR} a slightly more axiomatic point of view was adopted, where a complex Banach lattice $X$ is defined as a triple $(X,c,|\cdot |)$, where $X$ is a complex Banach space, $c:z\to \bar z$ a norm-preserving antilinear involution on $X$ (the conjugation) and $|\cdot |: X\to X, z\mapsto |z|$ the module map, with the appropriate properties. A lattice homomorphism of complex Banach lattices is a complex linear map intertwining the modulus maps of both lattices. It is easy to see that it is positive and thus preserves the real part, and also the conjugation. Similarly a complex (closed) sublattice of a complex Banach lattice is a closed complex subspace  containing the conjugates and the modulus of any of its elements. In Lemma \ref{cpl-sublat-char} below it is proven that in fact the first hypothesis follows of the second one.

If $a\in X$ we denote by the $\I_0(a)$ the (non closed) ideal generated by $a$, i. e. $\I_0(a)=\{z\in X: \exists n\in \mathbb N,  |z|\le n|a|\}$, and by $\I(a)$ its closure. Equipped with the norm $\|z\|_0 =\inf\{t>0: |z|\le t|a|\}$,  $\I_0(a)$ is a complex Banach lattice isometric and lattice isomorphic with a $\mathbb C$-valued $C(K)$-space, in which the element $|a|$ is represented  by the function $1$ \cite[Cor. 3.21]{AA}.

\medskip
\noindent{\it Polar decomposition of elements in a complex Banach lattice.}
 For every element $z$ in a complex Banach lattice $X$ there is a unique modulus preserving complex linear operator $S_z$ on $\I(z)$ (the ``sign of $z$'') such that $z=S_z |z|$. This operator $S_z$ is defined first on the non-closed ideal $\I_0(|z|)$, using the representation of $\I_0(z)$ as a space $C(K_z;\mathbb C)$, as the multiplication operator by $f_z$, where $f_z\in C(K_z)$ represents $z$; then $S_z$ is extended by continuity to $\I(z)$. Note that $|f_z|=1_{K_z}$, and $\bar f_z= f_z^{-1}$, so that  $|S_z|= I_{\I(z)}$ and $\overline{S_z}=S_z^{-1}$.
 
 As for the unicity of $S_z$, assume that $U:\mathcal I(z)\to\mathcal I(z)$ is a modulus preserving linear operator such that $U|z|=z$, then $V=S_z^{-1}U$  is also modulus preserving and $V|z|=|z|$. Let us show that $V$ is the identity of $\mathcal I(z)$, it will follow that $U=S_z$. For $0\le y\le |z|$ we have $|Vy|=|y|=y$ and $V(|z|-y)=|z|-y$, hence
\begin{equation}\label{eq:V}
 Vy+V(|z|-y)= V|z|=|z|= y+(|z|-y)=|Vy|+|V(|z|-y)|
 \end{equation}
 In any complex Banach lattice the equality $a+b=|a|+|b|$ implies $a\ge 0, b\ge 0$, thus (\ref{eq:V}) implies $Vy=|Vy|=y$.  By linearity and continuity, the equality $Vy=y$, so far proved for $0\le y\le |z|$, remains valid for all $y\in\mathcal I(z)$.
 
\begin{remark}\label{rem:sign-in-sublat}
The same proof shows that if $Y$ is a sublattice of $X$ containing $z$, and $U: \mathcal I(z)\cap Y\to\mathcal I(z)$ is a modulus preserving map such that $U|z|=z$, then $Uy=S_zy$ for every $y\in \mathcal I(z)\cap Y$. Thus {\it the sign of $z$ relative to any vector sublattice $Y$ containing $z$ is the restriction or the sign of $z$ relative to the ambient Banach lattice, to the ideal generated by $z$ in  $Y$}.
\end{remark}
\begin{remark}
If $z\in \Re X$, then $f_z$ is real valued, in fact it is the difference of two disjoint indicator functions. It follows that $S_z=P_z-Q_z$ where $P_z,Q_z$ are two complementary band projections on $\I(z)$, and $P_zz=z_+$, $Q_zz=-z_-$.
\end{remark}
 
The following characterization of  closed vector sublattices in a complex Banach lattice (which is probably folklore, although we could not find a reference of it in the literature), will be of crucial use in section \ref{section:sublattices-u.s.c.}, proof of Proposition \ref{char-comp-sublattice-up-to-sgn}.
\begin{lemma}\label{cpl-sublat-char}
A closed complex linear subspace of a complex Banach lattice is a complex vector sublattice iff it contains the moduli of all of its elements.
\end{lemma}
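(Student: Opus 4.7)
The forward direction is immediate from the axiomatic definition given in the excerpt (a complex sublattice is a closed complex subspace containing both conjugates and moduli of its elements), so the real content is the converse: given a closed complex linear subspace $Y \subset X$ such that $|z|\in Y$ whenever $z\in Y$, one must produce $\bar z\in Y$ as well. Since $\bar z = 2\,\Re z - z$, it is enough to show that $\Re z \in Y$ for every $z\in Y$.

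The plan is to recover $\Re z$ as a norm-limit of a sequence of elements built from $z$ and $|z|$ by linear operations and the modulus. Concretely, for real $t>0$ small, set
\[
g_t \;=\; \frac{|z + t\,|z|\,| \;-\; |z|}{t}.
\]
Because $z,|z|\in Y$ and $Y$ is closed under the modulus and under linear operations, each $g_t$ lies in $Y$. The goal is to prove $g_t\to \Re z$ in norm as $t\to 0^+$; then closedness of $Y$ yields $\Re z\in Y$.

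To establish the convergence, I would work inside the principal ideal $\I(z)$, using the representation $\I_0(|z|)\simeq C(K_z,\mathbb C)$ with $|z|$ corresponding to the constant function $1$ on $K_z$ and $z$ to some $f_z\in C(K_z,\mathbb C)$ with $|f_z|\le 1$. Pointwise on $K_z$, with $\theta(x)=\arg f_z(x)$ on the support of $f_z$, a direct computation gives
\[
|f_z + t\,|f_z|\,|^{\,2} \;=\; |f_z|^{2}(1+t^{2}) + 2t\,|f_z|\,\Re f_z,
\]
so on the support of $f_z$,
\[
|f_z+t|f_z|\,| \;=\; |f_z|\sqrt{1+2t\cos\theta+t^{2}},
\]
while both sides vanish off that support. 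The elementary Taylor estimate
\[
\bigl|\sqrt{1+2ts+t^{2}}-1-ts\bigr|\;\le\; C\,t^{2}\qquad(s\in[-1,1],\ |t|\le \tfrac12)
\]
then yields, uniformly on $K_z$,
\[
\bigl|\,|f_z+t|f_z|\,|-|f_z|-t\,\Re f_z\,\bigr|\;\le\; C\,t^{2}\,|f_z|.
\]
Translating back to the Banach lattice, this gives $\bigl|g_t-\Re z\bigr|\le C\,t\,|z|$ in $X$, hence $\|g_t-\Re z\|_X\le C\,t\,\|z\|_X\to 0$.

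The main obstacle is mostly technical: one must justify that the pointwise computation in the $C(K_z,\mathbb C)$ representation transfers to a genuine norm-convergence statement in the ambient Banach lattice $X$. This is handled by the observation that the sup-norm on $C(K_z,\mathbb C)\simeq (\I_0(|z|),\|\cdot\|_0)$ dominates the $X$-norm up to the factor $\||z|\|_X$, so the uniform estimate above immediately gives $X$-norm convergence. Once $\Re z\in Y$ is secured, $\bar z = 2\Re z - z\in Y$, and together with the standing hypothesis that $Y$ contains moduli, this shows $Y$ is a complex sublattice in the sense defined earlier (equivalently, writing $Y_{\mathbb R}=Y\cap\Re X$, one has $Y=Y_{\mathbb R}+iY_{\mathbb R}$ with $Y_{\mathbb R}$ a real closed sublattice).
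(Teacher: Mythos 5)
Your proof is correct and follows essentially the same route as the paper's: both recover $\Re z$ as the norm limit of the difference quotients $\frac 1t\big(\big|z+t|z|\big|-|z|\big)$ --- which, computed pointwise, coincide with the paper's quotients $\frac 1t\big(\big||z|+tz\big|-|z|\big)$ --- all of which lie in the subspace by the modulus hypothesis and linearity. The only divergence is in how the limit is verified: you pass to the $C(K_z;\comp)$ representation of the principal ideal and use a uniform Taylor estimate (which correctly transfers to the order estimate $|g_t-\Re z|\le Ct\,|z|$ and hence to norm convergence), whereas the paper squeezes the quotient between $\Re z$ and $\Re z+O(t)\big(|z|+t\,\Re z\big)$ using only the monotonicity of the complex modulus; both verifications are sound.
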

\begin{proof}
Let $E$ be a closed complex subspace of  a complex Banach lattice $X$, such that $E$ contains the modulus of any of its elements. We want to prove that $E$ contains the real parts of all its elements. It will be sufficient to prove that for every $x\in X$, the vector valued map $\reel\to \Re X$, $t\mapsto \big||x|+t x\big|$ has a derivative at $t=0$ and ${d\over dt}||x|+t x|\bigg|_{t=0}= \Re x$. Indeed by hypothesis if $x\in E$ all the quotients $\frac 1t\big[\big||x|+tx\big|-|x|\big]$, $|t|\ne 0$, belong to $E$. Let $x=a+ib$, $a,b\in \Re X$. Since $\big||x|+tx\big|= \big||x|+ta+itb\big|$ and $|b|\le |x|$ we have by monotonicity of the modulus (eq. \ref{modulus-monotonicity})
\[ \big||x|+ta\big| \le \big||x|+tx\big| \le \big||x|+ta+it|x|\big|\]
On the other hand since $a$ is real and  $|a|\le|x|$
\[ |x|+ta \ge |x|-|ta|\ge (1-|t|)|x|\]
hence for $0\le |t|<1$ we have $\big||x|+ta\big|=|x|+ta$ and
\[ \textstyle \big||x|+ta+it|x| \big| \le  \big||x|+ta+i\frac t{1-|t|}\big(|x|+ta\big)\big| = \big| 1+\frac {it}{1-|t|}\big|\big(|x|+ta\big)
\]
We have thus
\[\textstyle |x|+ta\le \big||x|+tx\big| \le  \big(1+\frac{t^2}{(1-|t|)^2}\big)^{1/2}\big(|x|+ta\big)
\]
hence for $t>0$:
\[ a\le \frac 1t\big[\big||x|+tx\big|-|x|\big]\le a+ \frac{t}{2(1-|t|)^2}\big(|x|+ta\big) \To\limits_{t\to 0^+} a
\]
in the norm topology of $X$. (For $t\to 0^-$ the inequalities have to be reversed).
\end{proof}

\subsection{Ultraproducts and axiomatizability}

Let $I$ be a set and $\U$ be an ultrafilter on $I$. If $(E_i)_{i\in I}$ is a family of Banach spaces indexed by $I$, the ultraproduct $\prod_{i,\U} E_i$ (also denoted by $\prod_\U E_i$ if there is no ambiguity with indices) is the quotient of the Banach space
\[\big(\bigoplus_{i\in I}E_i\big)_\infty=\big\{(x_i)\in \prod_{i\in I} E_i:\sup_{i\in I}\|x_i\|_{E_i}<\infty\big\}
\]
(the $\ell_\infty$-direct sum of the family $(E_i)$, equipped with the sup norm $\|(x_i)\|=\sup_{i\in I}\|x_i\|_{E_i}$)  by the closed linear subspace consisting of $\U$-vanishing families
\[\mathcal N= \big\{(x_i): \lim_{i,\U}\|x_i\|=0\big\} \] 
This quotient space equipped with the quotient norm is automatically a Banach space. For $(x_i)_{i\in I}\in \big(\bigoplus_{i\in I}E_i\big)_\infty$ we denote by $[x_i]_\U$ the corresponding element of $\prod_\U E_i$. The quotient norm of this element is given by the simple formula
\[ \|[x_i]_\U\|= \lim_{i,\U} \|x_i\|_{E_i}\]
If all the $E_i$ coincide with the same space $E$, their $\U$-ultraproduct is called the $\U$-ultrapower of $E$ and denoted by $E_\U$. In this case we denote by  $D_E$  the canonical isometric embedding of $E$ into $E_\U$, given by
\[D_E(x)= [(x)]_\U\]
where $(x)$ is the family with unique value $x$.

Let now $(X_i)_{i\in I}$ be a family of real Banach lattices, then  there is a unique vector lattice structure on $\prod_\U X_i$ for which the max and min operations are given by
\[ \xi\vee \eta= [x_i\vee y_i]_\U,\ \xi\wedge \eta= [x_i\wedge y_i]_\U \hbox{ whenever } \xi=[x_i]_\U, \eta=[y_i]_\U\]
The  corresponding positive cone on $\prod_\U X_i$ is  $C=\{\xi\vee 0: \xi\in  \prod_\U X_i\}$ (see \cite[proof of Proposition 3.2]{H}). The absolute value  on $\prod_\U X_i$ is given by
 \[ |\xi|=[|x_i|]_\U \hbox{ whenever } \xi=[x_i]_\U\]
Equipped with this vector lattice structure, $\prod_\U X_i$ is called the Banach lattice ultraproduct of the family $(X_i)$. In the case of an ultrapower, the canonical embedding $D_X: X\to X_\U$ is a lattice isometry.

Let $\mathbb X=\prod_{j,\U}(X_j)_\mathbb C$ be the ultraproduct of the complexifications of the Banach lattices $X_j$, then clearly $\mathbb X=\mathcal X+i\mathcal X$ and this sum is direct  (since the inequality $\|x+iy\|\ge  \|x\|\vee\|y\|$ is true for every  $x,y\in X_j$, for every $j\in J$, it remains true for $x,y\in \mathcal X$). It follows that, as a linear  space,  $\mathbb X$ is the complexification of $\mathcal X$, with real part operation given by $\Re ([z_j]_\U) = [\Re z_j]_\U$. Let us prove that the complex modulus on  $\mathbb X$ is given (as expected) by
\[ \big|[z_j]_\U\big| =[|z_j|]_\U \]
It will follow at once that the identification of $\mathbb X$ with $\left(\mathcal X\right)_\mathbb C$ is norm-preserving.

 Since on each $(X_j)_\mathbb C$ the  modulus map is 1-Lipschitz, we may define unambiguously a map $\tilde m: \mathbb X\to \mathcal X$ by $\tilde m([z_j]_\U)=[|z_j|]_\U$. On the other hand we may approximate the infinite supremum in the definition of the modulus by  finite ones, uniformly on the class of all real Banach lattices, and use the fact that finite suprema ``pass to ultraproducts''. More specifically, set for every $n\in \mathbb N$
 \[ K_n=\{2\pi \frac kn: k=0,1,\dots n-1\} \]
in any real Banach lattice $X$ we may define the  ``lattice term'' $t_n: X\times X\to X$ by
\[t_n(x,y)=\bigvee_{\theta\in K_n} (\cos \theta)\,x+(\sin\theta)\,y\]
We have clearly in any real Banach lattice $X$:
\[(\cos \theta)\,x+(\sin\theta)\,y \le  t_n(x,y)+\frac{2\pi}n (|x|+|y|) \]
for every $x,y\in X$ and $\theta\in [0,2\pi]$ (it is sufficient to check these inequalities when $X=\mathbb R$: see e.g. \cite[p. 66]{MN} or \cite[Corollary 352M]{F}). Thus
\[t_n(x,y)\le |x+iy|\le t_n(x,y)+\frac{2\pi}n (|x|+|y|) \]
When $x=[x_j]_\U$, $y=[y_j]_\U$ belong to $\mathcal X$ we have $t_n(x,y)=[t_n(x_j,y_j)]_\U$, and thus
\[t_n(x,y)\le \tilde m(x+iy)\le t_n(x,y)+\frac{2\pi}n (|x|+|y|)\]
and thus, for all $x,y\in\mathcal X$ and $n\in\mathbb N$
\[ \big| |x+iy|-\tilde m(x+iy)\big|\le \frac{4\pi}n (|x|+|y|)\]
thus $\tilde m(z)=|z|$ for every $z\in \mathbb X$, as was claimed. It follows that $\|z\|_\mathbb X= \|\tilde m(z)\|_\mathcal X= \| |z| \|_\mathcal X= \|z\|_{\mathcal X_\mathbb C}$ and the identification of $\mathbb X$ with $\mathcal X_\mathbb C$ is isometric. We shall say that $\mathbb X$ is the {\it complex Banach lattice ultraproduct} of the family $(X_j)_{j\in J}$.

Recall that the ultraproduct construction is functorial, in the sense that if $(E_i)_{i\in I}$ and $(F_i)_{i\in I}$ are two families of Banach spaces, resp. Banach lattices and $(T_i)_{i\in I}$ is an uniformly bounded family of linear operators(resp. lattice homomorphisms) $T_i:E_i\to F_i$, one may define the ultraproduct operator (resp. homomorphism) $\prod_\U T_i: \prod_\U E_i\to \prod_\U F_i$ by 
\[{\textstyle\prod_\U }T_i |[x_i]_\U:= [Tx_i]_\U\]
and we have a similar definition for the ultrapower $T_\U$ of a single operator $T:E\to F$.

\begin{definition}
A Banach space, resp. Banach lattice $X$ is an {\rm ultraroot} of the Banach space, resp. Banach lattice $Y$ iff $Y$ is linearly, resp. lattice isometric to an ultrapower of $X$. 
\end{definition}

\begin{definition}
A class $\mathcal C$ of (real or complex) Banach spaces, resp. Banach lattices, is {\rm axiomatizable} if it is closed under linear (resp. lattice) surjective isometries and  ultraproducts, and the complementary class to $\C$ is closed under ultrapowers. Equivalently, the class $\mathcal C$ is closed under surjective isometries, ultraproducts, and ultraroots.
\end{definition} 

\begin{proposition}\label{axiom-complex-vs-real-BL}
 A class $\mathcal C$ of real Banach lattices is axiomatizable if and only if the class $\mathcal C_\mathbb C$ of the complexified Banach lattices of $\mathcal C$ is.
\end{proposition}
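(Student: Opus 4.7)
The plan is to exploit the essentially bijective correspondence between real Banach lattices and their complexifications, combined with the commutation of ultraproducts with complexification that has been established in this section.

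First, I would record the following functorial properties of the complexification operation, most of which are either contained in Subsection \ref{subsubsec:BL-complex} or immediate from it. Every complex Banach lattice $Z$ is lattice isometric to the complexification of its real part $\Re Z$; a real lattice isometry $T: X \to Y$ extends uniquely to a complex lattice isometry $T_\mathbb{C}: X_\mathbb{C} \to Y_\mathbb{C}$; and conversely, a complex lattice isometry $S: X_\mathbb{C}\to Y_\mathbb{C}$ restricts to a real lattice isometry of real parts (it is positive and preserves the modulus, hence maps $\Re X_\mathbb{C}$ onto $\Re Y_\mathbb{C}$). From these observations, closure under surjective lattice isometries transfers in both directions: if $\mathcal{C}$ has it and $S:X_\mathbb{C}\to Z$ is a complex lattice isometry with $X\in \mathcal{C}$, then $Z=(\Re Z)_\mathbb{C}$ with $\Re Z$ real-lattice-isometric to $X$, hence in $\mathcal{C}$, hence $Z\in\mathcal{C}_\mathbb{C}$; the converse direction is symmetric.

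Second, I would invoke the identification $\prod_{j,\mathcal{U}}(X_j)_\mathbb{C} \cong \bigl(\prod_{j,\mathcal{U}} X_j\bigr)_\mathbb{C}$ as complex Banach lattices, which has just been proven verbatim in the preliminary discussion (the content of the formula $\bigl|[z_j]_\mathcal{U}\bigr|=[|z_j|]_\mathcal{U}$ and the subsequent identification of $\mathbb{X}$ with $\mathcal{X}_\mathbb{C}$). Specializing to a constant family yields $(X_\mathbb{C})_\mathcal{U}\cong (X_\mathcal{U})_\mathbb{C}$. Closure under ultraproducts then transfers in both directions: if all $(X_j)_\mathbb{C}$ lie in $\mathcal{C}_\mathbb{C}$, then all $X_j$ lie in $\mathcal{C}$, so $\prod_\mathcal{U} X_j\in\mathcal{C}$ and hence $\prod_\mathcal{U}(X_j)_\mathbb{C}\in\mathcal{C}_\mathbb{C}$; conversely, if all $X_j\in\mathcal{C}$, then $\prod_\mathcal{U}(X_j)_\mathbb{C} \cong (\prod_\mathcal{U}X_j)_\mathbb{C}\in\mathcal{C}_\mathbb{C}$ forces, by the already-established transfer of lattice isometries, $\prod_\mathcal{U}X_j\in\mathcal{C}$.

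For the ultraroot clause, the same identification does all the work. Given $Y$ real with $(Y_\mathbb{C})_\mathcal{U}\in\mathcal{C}_\mathbb{C}$, rewrite this as $(Y_\mathcal{U})_\mathbb{C}\in\mathcal{C}_\mathbb{C}$; by the isometry transfer this gives $Y_\mathcal{U}\in\mathcal{C}$, and closure of $\mathcal{C}$ under ultraroots then yields $Y\in\mathcal{C}$, i.e. $Y_\mathbb{C}\in\mathcal{C}_\mathbb{C}$. The reverse direction is identical: $Y_\mathcal{U}\in\mathcal{C}$ implies $(Y_\mathcal{U})_\mathbb{C}=(Y_\mathbb{C})_\mathcal{U}\in\mathcal{C}_\mathbb{C}$, so $Y_\mathbb{C}\in\mathcal{C}_\mathbb{C}$ by the hypothesis on $\mathcal{C}_\mathbb{C}$, hence $Y\in\mathcal{C}$.

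There is no real obstacle: every step reduces to either the functorial correspondence between real and complex lattice isomorphisms or the commutation formula $(X_\mathcal{U})_\mathbb{C}\cong(X_\mathbb{C})_\mathcal{U}$ already at hand. The only point worth explicit verification is that the latter isomorphism is a \emph{lattice} isometry rather than a mere isometry of complex Banach spaces, but this is precisely the content of the modulus-commutation argument of the preceding preliminary paragraphs.
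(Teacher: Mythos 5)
Your proposal is correct and follows essentially the same route as the paper: both rest on the observation that two complex Banach lattices are lattice isometric iff their real parts are, together with the identification $(X_{\mathbb C})_{\mathcal U}\cong (X_{\mathcal U})_{\mathbb C}$ established in the preliminaries, and then transfer each of the three closure conditions. The paper merely writes this more tersely, doing one implication explicitly and declaring the converse similar, whereas you spell out both directions.
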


\begin{proof} 
We note first that two complex Banach lattices are lattice isometric (as complex Banach lattices) iff their real parts are lattice isometric (as real Banach lattices).

Assume that $\mathcal C$ is axiomatizable. Then $\mathcal C_\mathbb C$ is closed under lattice isometries by the preceding remark, and by ultraproducts by  definition. If $L$ is a complex Banach lattice with an ultrapower $L_\U$ belonging to $\mathcal C_\mathbb C$, then $((\Re L)_\U)_\mathbb C$ is lattice isometric to $L_\U$  and thus belongs to  $\mathcal C_\mathbb C$; then its real part  $(\Re L)_\U$ belongs to $\mathcal C$, hence by the hypothesis $\Re L$ belongs to $\mathcal C$ too, which means that $L$ belongs to $\mathcal C_\mathbb C$: the latter class is thus closed under ultraroots, and finally it is axiomatizable.

The proof of the converse implication is similar.
\end{proof}

\subsection{Monotone Convergence}
A real Banach lattice $X$ is said to be {\it order complete} if every upward directed order bounded set in $X$ has a least upper bound in $X$. In this case every band $B$ in $X$ is a projection band.

The Banach lattice $X$ is said to be {\it order continuous} if every decreasing sequence of positive elements converges in $X$ (the limit is then automatically the greatest lower bound of the sequence). Order continuity implies order completeness. In this case, every closed ideal in $X$ is a (projection) band. In contrast with order completeness, order continuity is hereditary by sublattices (see \cite[section 2.4]{MN}).

$X$ is said to be a Kantorovich-Banach space, in short {\it KB-space}  if every norm bounded monotone sequence  in $X$  converges. This is equivalent to $X$ containing no sub-lattice isomorphic to $c_0$ (equivalently, no linear subspace isomorphic to $c_0$). In this case $X$ is a projection band in its bidual (see \cite[section 2.4]{MN} and \cite[Theorem 1.c.4]{LT}).

By definition, a complex Banach space has one of the preceding monotone convergence properties iff its real part has.

Recall that an ultrafilter $\U$ is called {\it countably incomplete} if it exists a sequence $(U_n)$ of elements of $\U$ with empty intersection. Classical examples are the free ultrafilters on countable sets.

\begin{fact}\label{fact:ultra-oc}
Assume that the ultrafilter $\U$ is countably incomplete. Then for any Banach lattice $E$, the following assertions are equivalent: 

i) $E_\U$ is order continuous

ii) $E_\U$ is a KB-space.

iii) $E$ is a super-KB-space, i. e. the finite dimensional spaces $\ell_\infty^n$, $n\ge 1$, do not embed uniformly in $E$  as normed  vector lattices (equivalently, as normed linear spaces).
\end{fact}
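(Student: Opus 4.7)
The plan is to close the cycle $(ii)\Rightarrow(i)\Rightarrow(iii)\Rightarrow(ii)$. The first implication is a general Banach-lattice fact independent of countable incompleteness: if $E_\U$ is KB and $(x_n)\subset (E_\U)_+$ is decreasing, then $(x_1-x_n)$ is increasing and norm-bounded by $2\|x_1\|$, so it converges in $E_\U$ by the KB property, and consequently so does $(x_n)$.

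For $(i)\Rightarrow(iii)$ I argue contrapositively. Suppose $\ell_\infty^N$ embeds uniformly as a lattice in $E$, say via $T_N:\ell_\infty^N\to E$ with a common distortion $C$. By countable incompleteness of $\U$, pick a decreasing $(U_n)\subset\U$ with $\bigcap_n U_n=\emptyset$ and define $f:I\to\nat$ by $f(i)=\sup\{n:i\in U_n\}$ (with $f(i)=0$ if $i\notin U_1$); then $\{i:f(i)\ge N\}\supseteq U_N\in\U$, so $f\to\infty$ along $\U$. For $x=(x_k)_{k\ge 1}\in\ell_\infty$ set
\[
J(x)=\bigl[\,T_{f(i)}(x_1,\dots,x_{f(i)})\,\bigr]_\U\in E_\U
\]
(with $T_0=0$). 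Linearity and preservation of $\vee,\wedge$ hold coordinatewise, and the bounds $C^{-1}\max_{k\le f(i)}|x_k|\le\|T_{f(i)}(x_1,\dots,x_{f(i)})\|_E\le C\|x\|_\infty$, combined with $\max_{k\le f(i)}|x_k|\to\|x\|_\infty$ along $\U$, give $C^{-1}\|x\|_\infty\le\|J(x)\|\le C\|x\|_\infty$. Thus $J$ is a lattice-isomorphic embedding of $\ell_\infty$ into $E_\U$. Since $\ell_\infty$ is not order continuous and order continuity is hereditary by sublattices (Section \ref{section:Preliminaries}), $E_\U$ fails to be order continuous, contradicting $(i)$.

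For $(iii)\Rightarrow(ii)$, again contrapositively. If $E_\U$ is not KB, the characterization recalled in the preliminaries provides a sublattice copy of $c_0$ in $E_\U$, i.e.\ positive pairwise disjoint normalized $(e_k)_{k\ge 1}$ with $\|\sum a_ke_k\|_{E_\U}$ comparable to $\|a\|_\infty$; for each $N$ the span of $e_1,\dots,e_N$ is thus uniformly lattice-isomorphic to $\ell_\infty^N$. To transfer this to $E$, I would lift each $e_k$ to a positive representative $(e_k^i)_i$ with $\|e_k^i\|\to 1$ (after replacing $(e_k^i)$ by $(|e_k^i|)$ if needed); disjointness in $E_\U$ then yields $\|e_j^i\wedge e_k^i\|\to 0$ along $\U$ for $j\ne k$. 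Put
\[
s_k^i=\Bigl(e_k^i-\bigvee_{\substack{1\le j\le N\\ j\ne k}}e_j^i\Bigr)_+ \ge 0.
\]
The identities $(a-b)_+=a-a\wedge b$ for $a\ge 0$, $(a-p)\wedge(b-p)=(a\wedge b)-p$ for $p\le a,b$, and $a\wedge\bigvee_j b_j\le\sum_j a\wedge b_j$ for positives, together show that the $s_k^i$ are pairwise disjoint in $E_+$ and satisfy $\|s_k^i-e_k^i\|\le\sum_{j\ne k}\|e_j^i\wedge e_k^i\|\to 0$. So for every $\eps>0$ and every $N$, choosing $i$ in a suitable element of $\U$ turns $s_1^i,\dots,s_N^i$ into the image of a $(C+\eps)$-lattice embedding $\ell_\infty^N\hookrightarrow E$, contradicting $(iii)$. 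The main obstacle is exactly this final perturbation step: one must verify that replacing the almost-disjoint lifts $e_k^i$ by the exactly disjoint $s_k^i$ preserves the $\ell_\infty^N$-structure with distortion controlled \emph{uniformly in $N$}. Everything else is routine bookkeeping: the heredity of order continuity disposes of the $\ell_\infty$-hurdle in $(i)\Rightarrow(iii)$, the $c_0$-characterization of KB-spaces is already in the preliminaries, and the countable-incompleteness construction of $f$ provides the $\nat$-indexed approximation inside the ultrapower.
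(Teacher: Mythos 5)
Your cycle $(ii)\Rightarrow(i)\Rightarrow(iii)\Rightarrow(ii)$ is correct, but it is organized quite differently from the paper's proof. The paper takes $(ii)\Rightarrow(i)$ and $(ii)\Leftrightarrow(iii)$ as standard and spends its effort on the one remaining arrow $(i)\Rightarrow(ii)$, proved contrapositively \emph{inside} $E_\U$: given a disjoint normalized positive $c_0$-sequence $(\xi_n)$ in $E_\U$, countable incompleteness is used to assemble the partial sums of representatives into a single majorant $\xi=[x_i]_\U\ge\xi_n$ for all $n$, so that $\xi-\sum_{k\le n}\xi_k$ is decreasing but not Cauchy. You never prove that arrow directly; instead you route through $E$, using countable incompleteness in $(i)\Rightarrow(iii)$ to glue the uniform copies of $\ell_\infty^n$ in $E$ into a lattice copy of $\ell_\infty$ in $E_\U$ (and then invoking heredity of order continuity), and handling $\neg(ii)\Rightarrow\neg(iii)$ by the standard disjointification of almost-disjoint lifts. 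Both are valid; the paper's argument is shorter and self-contained in $E_\U$, while yours makes the ``standard facts'' explicit and isolates cleanly where countable incompleteness is needed. The step you flag as the main obstacle does close, and uniformly in $N$: once the $s_k^i$ are positive and pairwise disjoint with $\|s_k^i\|\ge 1-\eps$ and $\bigl\|\bigvee_k s_k^i\bigr\|\le\bigl\|\sum_{k\le N}e_k^i\bigr\|+\sum_k\|s_k^i-e_k^i\|\le C+\eps$ (both achievable for $i$ in a single set of $\U$ depending on $N$ and $\eps$), the identity $\bigl|\sum_k a_k s_k^i\bigr|=\bigvee_k|a_k|s_k^i$ gives $(1-\eps)\|a\|_\infty\le\bigl\|\sum_k a_k s_k^i\bigr\|\le(C+\eps)\|a\|_\infty$ for all $a$ simultaneously, a distortion independent of $N$; so there is no genuine gap, only a routine verification you left implicit.
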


\begin{proof}
The implication ii) $\implies$ i) follows from definitions, and the equivalence of ii) and iii) is a standard fact. Let us  prove the implication  i) $\implies$ ii). Assume that $E_\U$ is not a KB-space, and let $(\xi_n)$ be a sequence of normalized positive pairwise disjoint vectors in $E_\U$ which is equivalent to the unit basis of $c_0$. We claim the existence in $E_\U$ of a positive vector $\xi$ majorizing all the vectors $\xi_n, n\ge 1$. This will prove that $E_\U$ is not order continuous since then the sequence $u_n:=\xi-\sum_{k=1}^n\xi_k$ is decreasing but not convergent (not Cauchy). To prove the claim, for each $n$ let $(x_{i,n})_{i\in I}$ be a bounded family of positive elements in $E$ representing $\xi_n$. Since $C:=\sup_n\|\sum_{k=1}^n\xi_k\|<\infty$ we may find $U_n\in\U$ such that $\|\sum_{k=1}^n x_{i,k}\|\le 2C$ for every $i\in U_n$. We may assume that the sequence $(U_n)$ is decreasing and, using the countable incompleteness of $\U$, that  $\bigcap_n U_n=\emptyset$. Then set $x_i=0$ for $i\in I\setminus U_1$, and for every $n\ge 1$, set $x_i=\sum_{k=1}^n x_{i,k}$ for $i\in U_n\setminus U_{n+1}$. Since $\|x_i\|\le 2C$ for all $i\in I$ we may define   $\xi=[x_i]_\U$ in $E_\U$, and we have $\xi\ge \xi_n$ for every $n\ge 1$ since $x_i\ge x_{i,n}$ as soon as $i\in U_n$.
\end{proof}

\begin{remark}l\label{rem:unif-ultra-oc}
In sections \ref{section:UBL} and \ref{script-C} several statements will refer to axiomatizable classes of order continuous Banach lattices. By Fact \ref{fact:ultra-oc}, being closed under ultrapowers, such a class $\C$ consist in fact of super-KB-spaces. Using the closedness under ultraproducts, one may prove more: the members of $\C$ are uniformly super KB, in the sense that for any $\lambda\ge 1$ the set of dimensions  $d$ such that of $\ell_\infty^d$ embeds $\lambda$-isomorphically in some member of $\C$ is finite. Using \cite[Theorems 1.f.12 and 1.f.7]{LT} one may see that the members of the class $\C$ are uniformly $q$-concave for some $q<\infty$.
\end{remark}

\subsection{Polar decomposition of disjointness preserving operators}

Recall that a boun-ded linear operator $T:X\to Y$, where $X$, $Y$ are Banach lattices, is said to have a modulus if for each $x\in X_+$, the supremum $$|T|x:=\sup \{|Ty|: y\in X, |y|\le x\}$$ exists in $Y$. The map $|T|$ extends in a unique way to a positive linear operator $|T|: X\to Y$, the modulus of $T$  (\cite[Theorem 1.3.2]{MN}).

\subsubsection{The real case}
The following fact derives from a well known result first stated by M. Meyer  \cite[Th\'eor\`eme 1.2]{M76}.

\begin{fact}\label{structure-dp-maps}Let $X,Y$ be two  real Banach lattices. Every bounded linear operator $T: X\to Y$ which preserves disjointness  is the difference of two bounded lattice homomorphisms, with disjoint ranges, and has a modulus $|T|$ which is a bounded lattice homomorphism from $X$ to $Y$. If $T$ is isometric, or surjective, so is $|T|$. Moreover  there exists a band projection $P$ of the closed order ideal $\I(|T|X)$ generated by the range of $|T|$, such that $T=U|T|$, with $U=2P-I$. If $T$ is surjective, or if $Y$ is order complete, $P$ extends to a band projection of $Y$. 
\end{fact}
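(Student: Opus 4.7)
The plan is to invoke M.~Meyer's structural result for disjointness-preserving operators as a black box for the first assertions, and then extract the polar decomposition by a direct band analysis. Meyer's theorem supplies at once the decomposition $T=T_+-T_-$, where $T_+,T_-$ are bounded positive lattice homomorphisms with disjoint ranges $T_+X\perp T_-X$, together with the modulus $|T|=T_++T_-$ (itself a bounded lattice homomorphism). This yields the first two sentences of the statement without further work.

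I would then derive, for every $x\in X$, the pointwise identity $|Tx|=|T|\,|x|$. Writing $Tx=T_+x-T_-x$ and using $T_+x\perp T_-x$ gives $|Tx|=|T_+x|+|T_-x|=T_+|x|+T_-|x|=|T|\,|x|$, the second equality coming from $T_\pm$ being lattice homomorphisms. If $T$ is isometric, then since $|T|$ is also a lattice homomorphism, $\||T|y\|=\||T|\,|y|\|=\|Ty\|=\|y\|$ for every $y\in X$, so $|T|$ is isometric. If $T$ is surjective, the identity shows $Y_+\subseteq|T|X$; being a linear subspace containing $Y_+$, $|T|X$ equals $Y$, so $|T|$ is surjective.

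For the polar decomposition, I would first define $P$ on the range $|T|X$ by $P|T|x:=T_+x$. This is unambiguous: if $|T|x=0$ then $T_+x+T_-x=0$ with $T_+x\perp T_-x$, so both summands vanish. A direct computation gives $(2P-I)|T|x=2T_+x-|T|x=T_+x-T_-x=Tx$. The remaining task is to extend $P$ to a band projection on $Z:=\I(|T|X)$, for which my plan is to establish the band decomposition
\[ Z=\I(T_+X)\oplus\I(T_-X), \]
as a disjoint sum. Disjointness $T_+X\perp T_-X$ propagates to the generated closed ideals by solidity, so their sum is itself a closed ideal; it contains $|T|X=T_+X+T_-X$ and is contained in $Z$, hence coincides with $Z$. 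Inside $Z$ one checks $\I(T_+X)^{\perp}=\I(T_-X)$ and symmetrically, so $\I(T_+X)=\I(T_+X)^{\perp\perp}$ is a band of $Z$; the coordinate projection onto this band is the band projection of $Z$ extending $P$.

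For the final sentence: in the surjective case the inclusion $Y_+\subseteq|T|X$ already gives $\I(|T|X)=Y$, so $P$ is defined on all of $Y$. When $Y$ is order-complete, every band is a projection band, so the band of $Y$ generated by $T_+X$ is a projection band, and the associated band projection of $Y$ restricts to $P$ on $\I(|T|X)$. The main technical step in the whole argument is the displayed band decomposition -- more precisely, the closedness of the sum of two disjoint closed ideals in a Banach lattice; once that is in hand, the rest reduces to routine bookkeeping around Meyer's theorem.
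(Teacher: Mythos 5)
Your proof is correct and follows essentially the same route as the paper: both rest on Meyer's structure theorem for disjointness-preserving operators and both obtain the band projection $P$ from the disjoint decomposition $\I(|T|X)=\I(T_+X)\oplus\I(T_-X)$, with the surjective and order-complete cases handled identically. The only (harmless) difference is one of direction --- the paper first gets the modulus $|T|$ from the Meyer--Nieberg theorems and then constructs $T_\pm=\tfrac12(|T|\pm T)$, whereas you take the decomposition $T=T_+-T_-$ as given and derive the identity $|Tx|=|T|\,|x|$ from it; you also make explicit the closedness of the sum of two disjoint closed ideals, which the paper leaves implicit.
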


\begin{proof}[Sketch of proof]
By  the proof of \cite[Theorem 3.1.5]{MN}, $T$ is order bounded. It follows then by  \cite[Theorem 3.1.4]{MN} that $T$ is regular and has a  modulus $|T|$  and moreover
$$\forall x\in X_+,\ \ |T|x=|Tx|$$
 In particular if $T$ is surjective then $|T|(X)\supset Y_+$, hence $|T|$ is also surjective. 
Since $|T|$ is positive and disjointness preserving it is a lattice homomorphism. Then
$$\forall x\in X,\ \ \big | |T|\,x\big |= |T|\,|x| =\big |T\,|x|\big |= |Tx_++Tx_-|=|Tx_+-Tx_-|=|Tx|$$
(since  $Tx_+$ is disjoint from $Tx_-$). It follows that $\|\,|T|\,x\|=\|Tx\|$ for all $x\in X$, and thus $|T|$ is  isometric if (and only if) $T$ is. 

The main argument of the proof of Theorem 3.1.4 in \cite{MN} (see also \cite[Lemma 2.39]{AB}) states that
$$\forall x,y\in X_+,\quad  (Tx)_+\perp (Ty)_-$$
Hence the bands $B_+$ and $B_-$ in $Y$ generated respectively  by the sets $\{(Tx)_+:x\in X_+\}$ and $\{(Tx)_-: x\in X_+\}$ are disjoint. 

If we set
\[T_+=\frac 12 (|T|+T) \quad\hbox{and}\quad T_-=\frac 12 (|T|-T)\]
we have $T_+x=(Tx)_+$ and $T_-x=(Tx)_-$ for every $x\in X_+$, thus $T_+$ and $T_-$ are two positive operators with disjoint ranges (respectively included in $B_+$ and $B_-$), which both map disjoint positive elements on disjoint positive elements, hence they  are disjointness preserving and thus lattice homomorphisms.

We have clearly $\I(TE)=\I(|T|E)=\I(T_+E)+\I(T_-E)$, a disjoint sum, thus  $\I(T_+E)$ is a projection band in $\I(|T|E)$.  Let $P$ the associated band projection, then $T_+=P|T|$ and $T_-=(I-P)|T|$, thus
\[ T=T_+-T_- =P|T|-(I-P)|T|=U|T|\]

If $T$ is surjective, then $Y=B_++B_-$, and $B_+$ is a projection band. If $Y$ is order complete every band in $Y$ is a projection band. In both cases,  the band projection in $Y$ associated with $B_+$ extends $P$.
\end{proof}

\begin {remark} We call ``sign-change operator" on a real Banach lattice $Y$, or simply ``sign change'',  an isometry of the form $U=2P-I$, where $P$ is a band projection.  If $Y$ is a Banach ideal of measurable functions then $U$ is a sign-multiplication operator. Clearly every sign change operator preserves the modulus, i. e. $|Ux|=|x|$ for every element $x\in Y$. Conversely every modulus preserving linear operator $U$ is disjointness  preserving, and $|U|$ is the identity, thus by Fact \ref {structure-dp-maps}, $U$ is a sign-change operator.
\end{remark}


\subsubsection{The complex case}

Let us define a complex sign-change on a complex Banach lattice $X$ as a linear map $U:X\to X$ which {\it preserves the modulus}, i.e.$|Ux|=|x|$ for all $x\in X$. Such a map $U$ is clearly disjointness preserving, and injective. In particular $U$ preserves each ideal $\I_0(z)$ and in the representation of $\I_0(z)$ by a space $C(K_z)$, $U$ appears as the multiplier by a function $f_{U|z|}\in C(K_z)$ (representing the element $U|z|$ of $\I_0(z)$). The sign-change $U$ preserves also the closed  ideal $\I(z)$, on which it coincides with the complex sign operator $S_{U|z|}$ (by unicity of the polar decomposition of the element $U|z|$). In particular $U$ maps each ideal  $\mathcal I(z)$ onto itself, so that it is invertible. Like for a sign operator, the inverse  of $U$ is its conjugate  $\overline U$ (defined by $\overline U z= \overline {U\bar z}$).
In the setting of complex Banach lattices, the structure of disjointness preserving bounded linear operators has the following description:

\begin{fact}\label{structure-comp-dp-maps}
If $X,Y$ are complex Banach lattices and $T:X\to Y$ is a complex linear disjointness preserving bounded operator, then $|T|$ exists and satisfies the equation:
\[|T||z|= | |T| z|= |Tz|\]
for all $z\in X$ \cite{M81}. In particular $|T|$ is a bounded  lattice homomorphism, and $|T|$ is surjective, resp. an isometry whenever $T$ is. Moreover there exists a unique complex sign-change $U$ on the closed order ideal $\I(|T|(X)) =\I(T(X))$ generated by the range of $T$ such that $T=U|T|$. If $Y$ is order complete, $U$ may be extended to a complex sign change on $Y$.
\end{fact}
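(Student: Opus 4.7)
The first assertion---that $|T|$ exists and satisfies $|T||z|=||T|z|=|Tz|$---is Meyer's theorem as recorded in \cite{M81}, which I plan to invoke without reproving. The remaining properties of $|T|$ are one-line consequences of this identity: from $|z_1|\wedge|z_2|=0$ one gets $|T||z_1|\wedge|T||z_2|=|Tz_1|\wedge|Tz_2|=0$ by disjointness preservation of $T$, so the positive operator $|T|$ is a complex lattice homomorphism; the chain $\||T|z\|=\|\,||T|z|\,\|=\|\,|Tz|\,\|=\|Tz\|$ transfers isometry from $T$ to $|T|$; and when $T$ is surjective, any $y\in Y_+$ written as $y=Tz$ satisfies $y=|y|=|Tz|=|T|\,|z|$, so $Y_+=|T|(X_+)$ and hence $|T|(X)=Y$.

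For the sign change $U$, my plan is to assemble it from the local signs $S_{Tx}$ (as defined before the lemma) for $x\in X_+$. Each $S_{Tx}$ is a modulus-preserving complex linear operator on the principal ideal $\mathcal I(Tx)=\mathcal I(|T|x)$ satisfying $S_{Tx}|T|x=Tx$. I would define $U$ first on the non-closed ideal $\mathcal I_0(|T|(X_+))$ by $Uw:=S_{Tx}w$ whenever $|w|\le C|T|x$ for some $x\in X_+$; a single such $x$ always covers a given finite collection of positive elements because $|T|$ is a lattice homomorphism and $|T|x_1+\cdots+|T|x_n=|T|(x_1+\cdots+x_n)$.

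The main obstacle is showing this definition is consistent across choices of $x$; concretely, if $x\le x'$ in $X_+$, then the restriction of $S_{Tx'}$ to $\mathcal I(|T|x)$ must equal $S_{Tx}$. To handle this, I would pass to the representation $\mathcal I_0(|T|x')\simeq C(K;\mathbb C)$ in which $|T|x'$ corresponds to $1_K$ and $Tx'$ to a unimodular function $f$. Writing $Tx$ and $T(x'-x)$ as $g,g'\in C(K;\mathbb C)$ with $|g|=|T|x$, $|g'|=|T|(x'-x)$, and noting $g+g'=f$ together with $|g|+|g'|=|f|=1$ (since $|T|x+|T|(x'-x)=|T|x'$ corresponds to $1_K$), a short pointwise argument forces $g$ and $g'$ to share arguments where both are nonzero and yields $f\cdot|Tx|=g=Tx$, i.e.\ $S_{Tx'}|Tx|=Tx$. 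The uniqueness clause of Remark \ref{rem:sign-in-sublat} then identifies $S_{Tx'}|_{\mathcal I(|T|x)}$ with $S_{Tx}$.

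Once consistency is in hand, $U$ is complex linear and modulus-preserving on $\mathcal I_0(|T|(X_+))$, and hence extends (by continuity and complexification) to a modulus-preserving complex linear map on $\mathcal I(T(X))=\mathcal I(|T|(X))$. The identity $T=U|T|$ holds on $X_+$ because $U|T|x=S_{Tx}|T|x=Tx$, and it propagates to $X$ by complex linearity. Uniqueness of $U$ is a further application of Remark \ref{rem:sign-in-sublat}: any complex sign change $U'$ satisfying $T=U'|T|$ must coincide with $S_{Tx}$ on every $\mathcal I(Tx)$ and hence with $U$ on a dense subset of $\mathcal I(T(X))$. Finally, if $Y$ is order complete then $\mathcal I(T(X))$ is a projection band, and extending $U$ by the identity on the complementary band produces the sought complex sign change on all of $Y$.
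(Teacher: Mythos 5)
Your hand-construction of the sign change $U$ from the local signs $S_{Tx}$ ($x\in X_+$), with consistency across $x\le x'$ checked in the $C(K;\mathbb C)$ representation via the equality case of the triangle inequality, is sound and is a genuinely different route from the paper, which simply invokes the polar decomposition theorems of Boulabiar--Buskes \cite[Theorems 4 and 7]{BB}; your version has the merit of being self-contained. But there are two genuine gaps. The first is at the very start: the results you and the paper cite (the existence of $|T|$ and the identity $|T||z|=\big||T|z\big|=|Tz|$) require $T$ to be \emph{order bounded}, and verifying this is the entire content of the paper's proof. It is done by writing $T(x+iy)=(Ax-By)+i(Ay+Bx)$ with $Ax=\Re(Tx)$, $Bx=\Re(-iTx)$ real-linear, bounded and disjointness preserving, hence order bounded by (the proof of) \cite[Theorem 3.1.5]{MN}, whence $T$ is order bounded. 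By invoking \cite{M81} as a black box you skip exactly the one hypothesis that has to be checked.

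The second gap is in your last sentence: it is not true that in an order complete Banach lattice the closed ideal $\mathcal I(T(X))$ is a projection band. Order completeness makes every \emph{band} a projection band, but a closed ideal need not be a band: $c_0$ is the closed ideal generated by the range of the disjointness preserving inclusion $c_0\to\ell_\infty$, and $\ell_\infty$ is order complete, yet $c_0$ is not a band there. So ``extend by the identity on the complementary band'' does not apply directly; one must first extend $U$ from $\mathcal I(T(X))$ to the band it generates, which is a nontrivial use of order completeness and is precisely what the paper delegates to \cite[Theorem 8]{GH}. The remaining pieces of your argument --- deriving the lattice homomorphism, isometry and surjectivity properties of $|T|$ from the modulus identity, and the uniqueness of $U$ via Remark \ref{rem:sign-in-sublat} --- are correct.
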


We have only to justify that  the complex linear operator $T$ is order bounded, and then to apply \cite[Theorems 4 and 7]{BB} and (for the last sentence) \cite[Theorem 8]{GH}.   But $T$ may be expressed in terms of two real-linear bounded operators $A$, $B$ by the well known formula
 \[\forall x,y\in \Re X,\ T(x+iy)= (A+iB)(x+iy)=(Ax-By)+i(Ay+Bx)\] 
$A,B$ are disjointness preserving since they depend linearly on $T$ by the formulas
 \[\forall x\in \Re X,\ Ax=\Re (Tx),\ Bx=\Re(-iTx)\]
Hence  they are order bounded by the aforementioned Theorem 3.1.5 in \cite{MN}, and the order boundedness of $T$ is immediate.

\section{Sublattices up to a sign change}\label{section:sublattices-u.s.c.}

We say that a linear subspace $E$ of a Banach lattice $X$ is a vector sublattice ``up to a sign change" if there is a sign change $U$ on $X$ such that $U(E)$ is a vector sublattice of $X$. It follows from Facts \ref{structure-dp-maps},  \ref{structure-comp-dp-maps} that the image of a (real or complex) Banach lattice by a disjointness preserving isometry with values in an  order complete Banach lattice $Y$ is a (closed) vector sublattice of $Y$ up to a sign change. 

We give now an intrinsic characterization of vector sublattices up to a sign change in an order continuous Banach lattice, introduced by Lacey \cite{L} in the context of $L_p$ spaces.

\subsection{The real case}

\begin{notation}[Lacey's $b$ function]\label{Lacey-map}
For $x,y$ elements of a real vector lattice, we set
\[ b(x,y)= |x|\wedge y_+ - |x|\wedge y_-\]
\end{notation}

\begin{lemma}
A bounded linear operator $T:X\to Y$ preserves disjointness iff it preserves the operation $b$, that is  
$T(b(x,y))=b(Tx,Ty)$ for every $x,y\in X$.
\end{lemma}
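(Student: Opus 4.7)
The plan is to prove the two implications separately, using Fact~\ref{structure-dp-maps} (the polar decomposition of a disjointness-preserving operator) for the forward direction and a direct identity for the converse.

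$(\Rightarrow)$ Suppose $T$ preserves disjointness. By Fact~\ref{structure-dp-maps}, $T$ factors as $T=U|T|$, where $|T|:X\to Y$ is a bounded lattice homomorphism and $U$ is a sign change on the ideal $\mathcal I(|T|X)\subseteq Y$. It thus suffices to verify that lattice homomorphisms and sign changes each preserve $b$; preservation by $T$ will then follow by composition. For a lattice homomorphism $S$ this is immediate, since $S$ commutes with $|\cdot|$, $(\cdot)_\pm$, and $\wedge$, giving $S(b(x,y))=|Sx|\wedge(Sy)_+-|Sx|\wedge(Sy)_-=b(Sx,Sy)$. For a sign change $V=2P-I$ on a Banach lattice $Z$ and elements $u,v\in Z$, I would split $u=u_1+u_2$, $v=v_1+v_2$ along the band decomposition $Z=PZ\oplus(I-P)Z$. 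Using that the two bands are disjoint, one computes $|Vu|=|u_1|+|u_2|$, $(Vv)_+=(v_1)_++(v_2)_-$ and $(Vv)_-=(v_1)_-+(v_2)_+$; then distributing the infimum over the disjoint pieces yields $b(Vu,Vv)=b(u_1,v_1)-b(u_2,v_2)$. The same distribution gives $b(u,v)=b(u_1,v_1)+b(u_2,v_2)$, and since $b(u_1,v_1)\in PZ$ while $b(u_2,v_2)\in(I-P)Z$, the action of $V$ as $+I$ on the first band and $-I$ on the second produces $V(b(u,v))=b(Vu,Vv)$.

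$(\Leftarrow)$ Suppose $T$ preserves $b$, and let $x\perp y$ in $X$. Since $x_\pm\le|x|$ we have $|y|\wedge x_\pm\le|y|\wedge|x|=0$, so $b(y,x)=0$, and by hypothesis $b(Ty,Tx)=T(b(y,x))=0$, i.e.\ $|Ty|\wedge(Tx)_+=|Ty|\wedge(Tx)_-$. These two elements are positive and mutually disjoint (because $(Tx)_+\perp(Tx)_-$); two equal disjoint positive elements must both vanish, so $|Ty|\wedge(Tx)_\pm=0$. Using the identity $w\wedge(w_1+w_2)=w\wedge w_1+w\wedge w_2$ valid for disjoint positive $w_1,w_2$, I deduce $|Ty|\wedge|Tx|=|Ty|\wedge((Tx)_++(Tx)_-)=0$, that is, $Tx\perp Ty$.

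The main obstacle is the bookkeeping in the sign-change case of the forward direction: it is essential to notice that $V$ swaps positive and negative parts on the ``flipped'' band, so $(Vv)_+$ is not $V(v_+)$ but mixes $(v_1)_+$ with $(v_2)_-$. Once this swap is tracked through the infima, the forward argument reduces to a distributional computation over complementary bands, while the converse follows in one line from the hypothesis applied to the vanishing element $b(y,x)$.
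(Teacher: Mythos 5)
Your proposal is correct and follows essentially the same route as the paper: the converse direction via the observation that $b(x,y)=0$ exactly when $x\perp y$, and the forward direction via the polar decomposition $T=U|T|$ of Fact~\ref{structure-dp-maps}, checking separately that lattice homomorphisms and sign changes preserve $b$. Your sign-change verification decomposes the arguments $u,v$ along the two complementary bands and tracks the swap of positive and negative parts explicitly, whereas the paper gets the same identity $b(Vu,Vv)=Vb(u,v)$ slightly more quickly by applying the band projections $P$ and $I-P$ to $b(Vu,Vv)$ and using the oddness $b(-x,-y)=-b(x,y)$; these are the same computation organized differently.
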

\begin{proof}
i) Since $|b(x,y)|=|x|\wedge |y|$ it is clear that $x,y$ are disjoint iff $b(x,y)=0$. Thus if $T$ preserves $b$, it preserves disjointness.

\noindent ii) It is clear that if $T:X\to Y$ is a vector lattice homomorphism then $T(b(x,y))=b(Tx,Ty)$ for every $x,y\in X$. Note also that $b(-x,-y)=-b(x,y)$ for all $x,y\in X$. Let now $U=2P-I$ be a sign-change. Since a band projection is a lattice homomorphism, we have:
\begin{align*}
Pb(Ux,Uy)&=b(PU x, PU y)=b(Px,Py)=Pb(x,y)\\
(I-P)b(Ux,Uy)&=b((I-P)Ux,(I-P)Uy)=b((P-I)x,(P-I)y)\\
&=-b((I-P)x,(I-P)y)=-(I-P)b(x,y)
\end{align*}
hence by summation
\[b(Ux,Uy)= Ub(x,y)\]
Since by Fact \ref{structure-dp-maps} every  disjointness preserving bounded linear operator $T$ is a composition of two normed vector lattice homomorphisms and  a sign-change, the lemma is proven.
\end{proof}

\begin{proposition}\label{char-sublattice-up-to-sgn}
Assume that $X$ is an order continuous real Banach lattice. Then for a closed linear subspace $E$ of $X$ the following assertions are equivalent:

i) $E$ is a closed vector sublattice up to a sign change.

ii) The function  $b$ maps $E\times E$ into $E$.
\end{proposition}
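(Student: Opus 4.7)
The direction i)$\Rightarrow$ii) follows from the preceding lemma: if $E = U(F)$ for a sign change $U$ and a closed sublattice $F$, then since $U$ preserves the operation $b$, we have $b(E,E) = U\cdot b(F,F) \subset U(F) = E$, using $b(F,F) \subset F$ for any sublattice $F$.

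For ii)$\Rightarrow$i), my plan is to exploit the order continuity of $X$ to extract band projections from the $b$-closure of $E$, then assemble a global sign change from a maximal disjoint family in $E$. First, for $x,y \in E$,
\[ b(nx,y) = n|x| \wedge y_+ - n|x| \wedge y_- \To P_{|x|}y \quad\text{in norm as } n\to\infty, \]
by order continuity; hence $P_{|x|}E \subset E$ (and consequently $(I-P_{|x|})E \subset E$) by the closedness of $E$. Replacing $X$ by the band generated by $E$ (sign changes on the complementary band are irrelevant), I may assume this band is all of $X$. Zorn's lemma applied to the poset of pairwise disjoint subsets of $E$ yields a maximal family $(e_\alpha)_{\alpha\in A}$; the pairwise disjoint bands $B_\alpha := \band(|e_\alpha|)$ must have join $X$, for otherwise, starting from any $y \in E$ whose support is not contained in $\bigvee_\alpha B_\alpha$, one forms $y' := y - \sum_\alpha P_{B_\alpha}y \in E$ (the net of partial sums being increasing and norm-convergent by order continuity) yielding a nonzero element of $E$ disjoint from every $e_\alpha$, contradicting maximality.

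Now set $P := \bigvee_\alpha P_{(e_\alpha)_+}$ and $U := 2P-I$, a sign change on $X$. To show $U(E)$ is a vector sublattice, it suffices to prove $U|y| \in E$ for every $y \in E$: then for any $v = Uy \in U(E)$, one has $|v| = |y| = U(U|y|) \in U(E)$. Writing $y = \sum_\alpha y^{(\alpha)}$ with $y^{(\alpha)} := P_{B_\alpha}y \in E$, so that $|y| = \sum_\alpha |y^{(\alpha)}|$ with both sums converging in norm by order continuity, it suffices to show $U|y^{(\alpha)}| \in E$ for each $\alpha$. This is the heart of the argument:
\[ b(y^{(\alpha)}, n e_\alpha) = |y^{(\alpha)}| \wedge n(e_\alpha)_+ - |y^{(\alpha)}| \wedge n(e_\alpha)_- \To P_{(e_\alpha)_+}|y^{(\alpha)}| - P_{(e_\alpha)_-}|y^{(\alpha)}| \]
in norm as $n \to \infty$. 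The right-hand side equals $U|y^{(\alpha)}|$ because $|y^{(\alpha)}|$ is supported in $B_\alpha = \band((e_\alpha)_+) \oplus \band((e_\alpha)_-)$, on which $U$ restricts to $P_{(e_\alpha)_+} - P_{(e_\alpha)_-}$. As each $b(y^{(\alpha)}, n e_\alpha) \in E$ and $E$ is closed, $U|y^{(\alpha)}| \in E$, completing the argument.

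The main obstacle is making the Zorn step watertight in the possibly uncountable setting: one must argue that the partial band projections $\sum_{\alpha \in F}P_{B_\alpha}$ applied to $y \in E$ stay in $E$, and then pass to the limit over finite $F \subset A$ using order continuity in its net form together with the norm closure of $E$. Once the covering $\bigvee_\alpha B_\alpha = X$ is secured, the rest is the clean limit identification $\lim_n b(y^{(\alpha)}, n e_\alpha) = U|y^{(\alpha)}|$, which falls out directly from $b$-closure and order continuity.
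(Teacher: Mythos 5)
Your proof is correct and follows essentially the same route as the paper: both directions use the same $b$-limits ($b(nx,y)\to P_{x}y$ and $b(y,ne_\alpha)\to (P_{(e_\alpha)_+}-P_{(e_\alpha)_-})|y|$), the same maximal disjoint system, and the same assembly of the global sign change from the local ones. The only cosmetic difference is that you decompose $y$ into its band components $y^{(\alpha)}=P_{B_\alpha}y$ before taking the second limit, whereas the paper establishes $S_y|x|\in E$ for all pairs $x,y\in E$ first and then sums over the maximal system.
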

\begin{proof}
i)$\implies$ ii): We have $E=U(F)$ where $F$ is a closed vector sublattice of $X$ and $U$ is some sign-change operator on $X$. Clearly $b$ maps $F\times F$ into $F$, then if $x,y\in F$,  
\[ b(Ux,Uy)=U b(x,y)\in U(F)=E\]
since $U$ is a disjointness preserving isometry.

ii) $\implies$ i): Since $X$ is order complete, for every  $y\in X$  the band generated by $y$ is a projection band, and the associated band projection $P_z$ is given by $P_y(v)=\sup\limits_{n\ge 1}(v\wedge n|y|)$, for every $v\in X_+$. By order continuity of $X$ it follows that $v\wedge n|y|\to P_y(v)$ for the norm topology of $X$.  Thus $P_y x= P_y x_{+}-P_y x_{-}=\lim\limits_{n\to \infty} b(ny,x)\in E$, for every $x,y\in E$. Exchanging the roles of  $x$ and $y$, we obtain that $P_{y_+}|x|-P_{y_-}|x|=\lim\limits_{n\to \infty} b(x,ny)\in E$ too. We set $S_y=P_{y_+}-P_{y_-}=P_y(2P_{y_+}-I)$: this is a change of sign operator ``localized on the support of $y$''. We have thus
\begin{align}
&S_y|x|\in E \label{1}\\
&P_y\,x\  \in E \label{2}
\end{align}
for every $x,y\in E$.
Note for further use that $S_y^2=P_y$ and 
\begin{align*}
|S_yx|&= |P_{y_+}x|+|P_{y_-}x| \quad \hbox{[$P_{y_+}x$ and $P_{y_-}x$ are disjoint]}\\
&= P_{y_+}|x|+P_{y_-}|x| \quad \hbox{[band projections are lattice homomorphisms]}\\
&=P_y|x|
\end{align*}
Choose now a maximal disjoint system $(y_\alpha)_{\alpha\in A}$ of non-zero elements in $E$, we claim that the band $B$ generated by the system $(y_\alpha)_{\alpha\in A}$ contains $E$ (and thus equals the band $B_E$ generated by $E$). Indeed if $x\in E$ and $I\subset A$ is a finite subset, and $B_I$ is the band generated by the finite family $(y_\alpha)_{\alpha\in I}$ in $X$ and $P_I$ the corresponding band projection then by formula (\ref{2})
\[P_I x=\sum_{i\in I} P_{y_i} x\in E\]
Let $P_B$ be the band projection onto $B$. By order continuity of $X$, the element $P_Bx$ is in the closure of the $P_I x$, $I\subset A$ finite, and thus belongs to $E$. Then $y:= x-P_B x$ belongs to $E$ and is disjoint of all $y_\alpha$, hence vanishes by the maximality of the system $(y_\alpha)$. Thus $x=P_B x$ for all $x\in E$.

Let $S=\sum\limits_{\alpha\in A} S_{y_\alpha}$, which converges in the strong operator topology of $B(X)$, then by (\ref{1}) $S|x|\in E$ for every $x\in E$. Since $S^2=\sum\limits_{\alpha\in A} S_{y_\alpha}^2=\sum\limits_{\alpha\in A} P_{y_\alpha}=P_B$, and $|x|\in B$ as does $x$, we have $|x|=P_B|x|\in S(E)$. On the other hand for $x\in E$
\[|Sx|=|\sum_\alpha S_{y_\alpha} x|=\sum_\alpha |S_{y_\alpha}x|= \sum_\alpha P_{y_\alpha}|x|=P_B|x|=|x| \]
thus $|Sx|\in S(E)$ for each $x\in E$, which shows that $S(E)$ is a vector sublattice of $X$.

$S$ is a change of sign operator on $P_BX$, to obtain the desired change of sign on the whole of $X$ put simply $U=S+I-P_B$.
\end{proof}

\subsection{The complex case}

\begin{notation}[\it Lacey's function, complex version]\label{comp-Lacey-map} 
 Define then for $x,y\in X$:
 \[b(x,y)= S_y( |x|\wedge |y|)\]
 (the complex sign $S_y$ was defined in subsection \ref{subsubsec:BL-complex}). The reader will notice that this formula defines Lacey's function  in the real case as well. 
\end{notation}

 \begin{lemma}\label{T-formula}
Let $X,Y$ be two complex Banach lattices. If $T:X\to Y$ is a vector lattice homomorphism then for every $x\in X$ we have $S_{Tx}T=TS_x$. If $U: X\to X$ is a complex sign-change then for  every $x\in X$ we have $S_{Ux}=US_x$. Consequently the function $b$ is preserved under disjointness preserving bounded linear maps.
\end{lemma}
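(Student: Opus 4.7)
The plan is to derive both identities from the uniqueness of the polar decomposition $z = S_z|z|$ established immediately before the statement, and then to combine them with the polar decomposition of disjointness preserving operators (Fact \ref{structure-comp-dp-maps}) in order to deduce the invariance of the function $b$.

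For the first identity, I first note that since $T$ is a lattice homomorphism satisfying $|Ty| = T|y|$, it sends $\mathcal{I}_0(x)$ into $\mathcal{I}_0(Tx)$ and hence, by continuity, $\mathcal{I}(x)$ into $\mathcal{I}(Tx)$. Let $Z$ denote the closure of $T(\mathcal{I}(x))$ inside $\mathcal{I}(Tx)$; it is a closed vector sublattice of $\mathcal{I}(Tx)$ containing $Tx = T(S_x|x|)$. I would define $V: T(\mathcal{I}(x)) \to \mathcal{I}(Tx)$ by $V(Ty) = T(S_x y)$. This is well defined because if $Ty = 0$ then $T|y| = |Ty| = 0$, and then $|TS_x y| = T|S_x y| = T|y| = 0$, using that $S_x$ is modulus preserving on $\mathcal{I}(x)$. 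The same chain of equalities shows that $V$ is modulus preserving on $T(\mathcal{I}(x))$, hence isometric, so it extends by continuity to a modulus preserving map on $Z$. Since $V(|Tx|) = V(T|x|) = T(S_x|x|) = Tx$, Remark \ref{rem:sign-in-sublat} applied to the sublattice $Z$ of $\mathcal{I}(Tx)$ forces $V$ to coincide with the restriction of $S_{Tx}$ to $Z$, which is exactly the desired identity $T S_x y = S_{Tx} T y$ for every $y \in \mathcal{I}(x)$.

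For the second identity, I observe that $|Ux| = |x|$ gives $\mathcal{I}(Ux) = \mathcal{I}(x)$ and that both $U$ and $S_x$ leave this ideal invariant. The composite $V := U S_x$ is modulus preserving (both factors are) and satisfies $V|Ux| = U S_x |x| = Ux$; the uniqueness part of the polar decomposition then yields $U S_x = S_{Ux}$.

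To conclude, let $T: X \to Y$ be disjointness preserving and write $T = U|T|$ via Fact \ref{structure-comp-dp-maps}, with $|T|$ a bounded lattice homomorphism and $U$ a complex sign change on the closed ideal $\mathcal{I}(T(X))$. For $x, y \in X$, the first identity applied to $|T|$ yields $|T| S_y = S_{|T|y} |T|$ on $\mathcal{I}(y)$, while the second identity applied to the sign change $U$ and the element $|T|y \in \mathcal{I}(T(X))$ yields $U S_{|T|y} = S_{U|T|y} = S_{Ty}$. Combining these with $|T||x| = |Tx|$ and the fact that $|T|$ preserves finite infima, I would compute
\[ T(b(x,y)) \;=\; U|T|\bigl(S_y(|x| \wedge |y|)\bigr) \;=\; U S_{|T|y}\bigl(|Tx| \wedge |Ty|\bigr) \;=\; S_{Ty}\bigl(|Tx| \wedge |Ty|\bigr) \;=\; b(Tx, Ty). \]
I expect the main delicate point to be the well-definedness of $V$ in the first part, since $T$ need not be injective; the crux is the chain $|TS_xy| = T|S_xy| = T|y| = |Ty|$, which forces the prescription $Ty \mapsto TS_xy$ to descend to an isometry of $T(\mathcal{I}(x))$.
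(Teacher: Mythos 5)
Your proof is correct and follows essentially the same route as the paper: both identities are derived from the uniqueness of the polar decomposition via Remark \ref{rem:sign-in-sublat} (defining the auxiliary modulus-preserving map $Ty\mapsto TS_xy$ and checking well-definedness through $|TS_xy|=T|S_xy|=T|y|=|Ty|$), and the invariance of $b$ then follows from the factorization $T=U|T|$ of Fact \ref{structure-comp-dp-maps} by exactly the computation the paper gives, written in the opposite direction. The only cosmetic difference is that you work with the closed ideals $\mathcal I(x)$ and the closure of $T(\mathcal I(x))$, where the paper first argues on the non-closed ideals $\mathcal I_0(x)$ and extends by density.
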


\begin{proof}

a)  If $T:E\to F$ is a vector lattice homomorphism it sends $\mathcal I_0(x)$ onto $\mathcal I_0(Tx)$, thus  $S_{Tx}T$ is well defined on $\mathcal I_0(x)$ (and extends by density to $\I(x)$). If $y\in \mathcal I_0(x)$ then
\[ |TS_xy|= T|S_xy|= T|y|= |Ty|\]
In particular $Ty=0\implies TS_xy=0$, hence there is a linear map $U: T\mathcal I_0(x)\to T\mathcal I_0(x)$ such that
\[\forall y\in  \mathcal I_0(x),\ TS_xy=UTy \]
Moreover $|UTy|=|Ty|$, hence $U$ is modulus preserving. We have 
\[U|Tx| = UT|x|=TS_x|x|=Tx\] 
Note that $T\I_0(x)$ is the ideal generated by $Tx$ in the vector sublattice $TX$. 
It follows from Remark \ref{rem:sign-in-sublat} that $U$ coincides with $S_{Tx}$ on the ideal $T\I_0(x)$ and finally
\[ \forall y\in \mathcal I_0(x),\ TS_xy=UTy=S_{Tx}Ty \]
This equality extends by density to all $y\in \mathcal I(x)$.
\medskip
 
 b) If $U: X\to X$ is a complex sign-change map and $x\in X$, then $|Ux|=|x|$ and $\I_0(Ux)=\I_0(x)$. 
Then
\[ S_{Ux}|Ux|=Ux= US_x|x|= US_x|Ux| \]
thus $S_{Ux}=US_x$ by unicity of the polar decomposition.
\medskip
  
 c)  Using successively points b) and a) above,  we have, if $T=U|T|$ is the polar decomposition of a disjointness preserving complex operator $T$ (as given by Fact \ref{structure-comp-dp-maps}) 
 \begin{align*}
 b(Tx,Ty)&=S_{U|T|y}(|Tx|\wedge |Ty|)=US_{|T|y}(|T||x|\wedge |T||y|)\\
 &=US_{|T|y}|T|(|x|\wedge |y|)= U|T|S_y(|x|\wedge |y|)=Tb(x,y)
\end{align*}
\vskip-12pt
\end{proof}

\begin{remark}
That Lacey's function $b$ is preserved under lattice homomorphism results also from the invariance of Krivine functional calculus under lattice homomorphisms, since $b$ may be defined by applying this calculus  to the homogeneous continuous scalar function $b_\comp:\comp \times\comp \to\comp $, $b_\comp(z,w)= (\left|\frac zw\right| \wedge 1)w $ if $w\ne 0$, $b_\comp(z,0)=0$.
\end{remark}

Now we may state the complex version of Proposition \ref{char-sublattice-up-to-sgn}.

\begin{proposition}\label{char-comp-sublattice-up-to-sgn}
Assume that $X$ is an order continuous complex Banach lattice. Then for a closed linear subspace $E$ of $X$ the following assertions are equivalent:

i) $E$ is a complex vector sublattice up to a sign change.

ii) The function  $b$ maps $E\times E$ into $E$.
\end{proposition}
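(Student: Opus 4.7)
The implication (i) $\implies$ (ii) is immediate from Lemma \ref{T-formula}: any complex sign change $U$ is disjointness preserving and so preserves $b$; likewise the inclusion $F\hookrightarrow X$ of a closed complex sublattice is a lattice homomorphism and preserves $b$, so $b(F\times F)\subseteq F$. If $E=UF$, then for $a,b\in F$ one has $b(Ua,Ub)=U\,b(a,b)\in UF=E$.

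For (ii) $\implies$ (i), the plan is to mimic the real-case argument of Proposition \ref{char-sublattice-up-to-sgn}, with a careful modification in the final sign-change construction. Starting from $b(ny,x)=S_x(n|y|\wedge|x|)$ and letting $n\to\infty$, order continuity gives $n|y|\wedge|x|\to P_y|x|$ in norm, while $S_x$ commutes with band projections on $\I(x)$, so $b(ny,x)\to S_x(P_y|x|)=P_yx$. The symmetric computation $b(x,ny)=S_y(|x|\wedge n|y|)\to S_y(P_y|x|)$ gives the companion limit. Since $E$ is closed, these yield the two key identities
\[P_yx\in E\quad\text{and}\quad S_y(P_y|x|)\in E\qquad(x,y\in E).\]

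I then choose a maximal disjoint family $(y_\alpha)_{\alpha\in A}$ of nonzero elements of $E$ and let $B$ be the band it generates, with band projection $P_B$. For $x\in E$, each $P_{y_\alpha}x$ lies in $E$ by the first identity, hence the finite sums $\sum_{\alpha\in I}P_{y_\alpha}x$ are in $E$ and converge in norm to $P_Bx$ by order continuity; closedness of $E$ gives $P_Bx\in E$, and by maximality $x=P_Bx$, so $E\subseteq B$. I introduce the pair of conjugate operators $S=\sum_\alpha S_{y_\alpha}$ and $\tilde S=\sum_\alpha\overline{S_{y_\alpha}}$ on $B$, strongly convergent since $B$ is the order-direct sum of the ideals $\I(y_\alpha)$. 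Because $\overline{S_{y_\alpha}}\,S_{y_\alpha}=P_{y_\alpha}$ on each $\I(y_\alpha)$, one has $\tilde S\cdot S=S\cdot\tilde S=I_B$. Finally set $U:=\tilde S+(I-P_B)$, a complex sign change on $X$.

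To conclude, by Lemma \ref{cpl-sublat-char} it suffices to check that $|Uz|\in U(E)$ for every $z\in E$. Modulus preservation of $U$ gives $|Uz|=|z|$, so the claim reduces to $U^{-1}|z|\in E$; and on $B$ one has $U^{-1}=S$, so $U^{-1}|z|=\sum_\alpha S_{y_\alpha}(P_{y_\alpha}|z|)$ is a disjointly supported series whose finite partial sums lie in $E$ by the second key identity and whose partial moduli $\sum_{\alpha\in I}P_{y_\alpha}|z|$ increase in norm to $|z|$, forcing convergence of the series. Thus $U^{-1}|z|\in E$, completing the proof. The principal subtlety absent from the real case is that the complex sign operator $S_y$ does not satisfy $S_y^2=P_y$: this forces the use of $\tilde S$ (rather than $S$) as the sign change, exploiting the identity $\tilde S\cdot S=P_B$ in place of the real-case identity $S^2=P_B$.
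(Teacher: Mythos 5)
Your argument is correct and follows essentially the same route as the paper's proof: the same two limit identities $P_yx\in E$ and $S_y(P_y|x|)\in E$ obtained from $b(ny,x)$ and $b(x,ny)$, the same maximal disjoint system generating the band $B$, and the same use of the conjugate operator $\tilde S=\overline S$ (with $\overline S S=S\overline S=P_B$) together with Lemma \ref{cpl-sublat-char}. The one step you assert rather than prove --- that $S_x$ commutes with band projections on $\I(x)$ --- is exactly what the paper derives from Lemma \ref{T-formula} via $Q_yS_xP_y|x|=S_{Q_yx}Q_yP_y|x|=0$, so nothing essential is missing.
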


\begin{proof}
i)$\implies$ ii): We have $E=U(F)$ where $F$ is a closed vector sublattice of $X$ and $U$ is some sign-change operator on $X$. Let us distinguish the functions $b$ on $X$ and on $F$ by denoting them $b_X$ and $b_F$. Let $i_F$ be the inclusion operator from $F$ into $X$. Then, since $U\circ i_F$ is disjointness preserving, $ b_X(U\circ i_F x,U\circ i_F y)=U\circ i_F  b_F(x,y)\in U(F)=E$ for all $x,y\in F$.

ii)$\implies$ i): Let $x,y\in E$. We shall prove that equations (1),(2) in the proof of the real case are still true in the complex case.

We have $b(ny,|x|)=|x|\wedge |ny|\to P_y|x|$; then since $P_y|x|$ as well as the $b(ny,|x|)$ belong to $\I(x)$ and $S_x$ is norm  one,  
\begin{align}\label{eq:convergence}
b(ny,x)= S_x b(ny,|x|)\to S_xP_y|x|
\end{align}
Let $Q_y=I-P_y$, then by Lemma \ref{T-formula}
\[Q_yS_xP_y|x|=S_{Q_yx}Q_yP_y|x|=0\]
and similarly we have $P_yS_xQ_y|x|=0$.
It follows that
\[S_xP_y|x|=P_yS_xP_y|x|=P_yS_x|x|=P_yx\]
thus by eq.(\ref{eq:convergence}), $P_yx=\lim_{n\to \infty} b(ny,x)$ belongs to $E$. 

Similarly, since $S_{ny}=S_y$ for every $n\in \mathbb N$, 
\[b(x,ny)=S_{ny}(|x|\wedge n|y|)\To S_yP_y|x| \]
Let us extend $S_y$ to the Banach lattice $X$ by setting $S_yu=S_yP_yu$ for every $u\in X$, it follows that $S_y|x|\in E$ whenever $x,y\in E$, as in the real case.

The end of the proof is similar to that of Proposition \ref{char-sublattice-up-to-sgn}. Choosing a maximal disjoint system $(y_\alpha)_{\alpha\in A}$ of nonzero vectors in $E$ and setting $S=\sum_{\alpha\in A} S_{y_\alpha}$, we have $\overline S S= S \overline S=P_B$, the band projection onto the band generated by $E$, and $S|x|\in E$ for every $x\in E$. It follows that $|x|\in \overline S (E)$ for every $x\in E$ and that the closed complex linear subspace $\overline S (E)$ contains the modulus of each of its elements. By Lemma \ref{cpl-sublat-char}, $\overline S (E)$ is a complex vector sublattice of $X$, and $E=S (\overline S (E))$ is a complex vector sublattice up to a sign change.
 \end{proof}

\section{Ultraroots of Banach lattices}\label{section:UBL}

\subsection{Main result}\label{subsec:MainResult}

\begin{definition}\label{DPI}
We say that a Banach lattice $L$ has property DPIU if every linear isometry of $L$ into any of its ultrapowers preserves disjointness.
\end{definition}

\begin{definition} Let $(X_i)_{i\in I}$ be a family of subsets of a given metric space $X$. We say that $(X_i)_{i\in I}$ is a paving of $X$ if for every finite subset $F$ of $X$ and every $\eps>0$, there is an index $j$ such that $\mathrm{dist} (x,X_j)<\eps$ for each $x\in F$. If $\U$ is an ultrafilter of subsets of $I$, we say that $\U$ is adapted to the family $(X_i)_{i\in I}$ if for every $x\in X$ and $\eps>0$, the set $\{i: \mathrm{dist} (x,X_i)<\eps\}$ belongs to $\mathcal U$ (in other words for every point $x\in X$ the distance from $x$ to $X_i$ converges to zero with respect to $\U$).
\end{definition}

Note that any paving $(X_i)_{i\in I}$ has an adapted ultrafilter.
 Indeed for $F$ a finite subset of  $X$ and $\eps$ a positive real number let $S_{F,\eps}=\{i\in I: F\subset X_i^\eps\}$, where $X_i^\eps:=\{x\in X: \mathrm{dist}(x,X_i)\le\eps\}$ is the $\eps$-enlargement of $X_i$. Then by hypothesis $S_{F,\eps}\ne\emptyset$ and if $F$, $G$ are two finite subsets and $\eps,\delta>0$,  $S_{F,\eps}\cap S_{G,\delta}\supset S_{F\cup G,\eps\wedge \delta}\ne\emptyset$. Then the set $\mathcal F$ consisting of all the subsets of $I$ containing a set among the sets $S_{F,\eps}$ is a filter, and any ultrafilter $\U$ containing $\mathcal F$ is adapted to the family $(X_i)_{i\in I}$. 

If $(X_i)$ is a paving of a Banach space $X$ by linear subspaces, and $\U$ is an adapted ultrafilter, there is a canonical  linear isometry $\Delta$ of $X$ into $\prod_{i,\U} X_i$ which we may define as follows:
if $x\in X$ choose a family $(x_i)\in \prod X_i$ converging to $x$ and let $\Delta(x)=[x_i]_\U$ be the element of $\prod_{i,\U} X_i$ that is defined by this family. Note that this definition is unambiguous since if $(x'_i)$ is another family $\U$-converging to $x$, the differences $(x_i-x'_i)$ $\U$-converge to 0, that is $[x'_i]_\U=[x_i]_\U$. It is clear that $\|\Delta(x)\|=\|x\|$. Moreover the ultraproduct $\tilde\gamma$ of the inclusion maps $\gamma_i: X_i\to X$ is a natural embedding   of the ultraproduct $\prod_{i,\U} X_i$ into the ultrapower $X_\U$, such that $\tilde\gamma\circ \Delta=D_X$, the canonical (diagonal) embedding of $X$ into $X_\U$. Since $D_X$ and $\tilde\gamma$ are linear and the latter one is injective, the map $\Delta$ must be linear.  Let also $Q:\prod_\U X_i\to X^{**}$ be the contractive linear map defined by 
\begin{equation}\label{eq:Q}
Q([x_i]_\U)= w^*-\lim\limits_{i,\U} x_i
\end{equation}
Then clearly $Q\Delta x=x$ for every $x\in X$, i.e. denoting by $i_X$ the canonical embedding from $X$ in its bidual, we have a commutative diagram
 
\[\xymatrix{&\prod_\U X_i\ar^Q[dr]&\\X\ar^{\Delta}[ur]\ar_{i_X}[rr]
&&X^{**}&}\]

It is easy to see that if $X$ is a Banach lattice and $(X_i)$ a paving of $X$ by vector sublattices, the above defined map $\Delta$ becomes a lattice isometry.

Now we state the main Theorem of this section.

\begin{theorem}\label{L-inverse ultraprod} Let $L$ be an order continuous Banach lattice 
satisfying DPIU. Let $X$ be a Banach space paved by a family $(X_i)$ of linear subspaces, and assume that for some adapted ultrafilter, the ultraproduct of $(X_i)$ is linearly isometric to $L$. Then $X$ itself is linearly isometric to a  closed sublattice of $L$. If moreover $L$ is a KB-space (i.e. does not contain $c_0$) then  this  closed sublattice may be chosen contractively complemented in $L$.
\end{theorem}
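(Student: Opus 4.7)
The plan is to realize the desired sublattice as an equalizer of two canonical linear isometries from $L$ into its ultrapower, and to use DPIU precisely to ensure that this equalizer is closed under Lacey's function $b$.

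From the paving and the linear isometric identification $\prod_{i,\U}X_i\cong L$, one has the canonical diagonal isometry $\Delta\colon X\to L$ together with the ultraproduct of inclusions $\tilde\gamma\colon L\to X_\U$, satisfying $\tilde\gamma\circ\Delta=D_X$. Taking the ultrapower of $\Delta$ and composing produces the linear isometry
\[\Phi:=\Delta_\U\circ\tilde\gamma\colon L\longrightarrow L_\U.\]
By the DPIU hypothesis, $\Phi$ preserves disjointness, hence preserves Lacey's function $b$. The diagonal embedding $D_L\colon L\to L_\U$ is a lattice isometry and also preserves $b$. Using $\tilde\gamma\circ\Delta=D_X$ and $\Delta_\U\circ D_X=D_L\circ\Delta$, one checks $\Phi(\Delta x)=D_L(\Delta x)$ for every $x\in X$, so the closed linear subspace
\[F:=\{\zeta\in L:\Phi\zeta=D_L\zeta\}\]
contains $\Delta(X)$. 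Moreover $F$ is closed under $b$: for $\xi,\eta\in F$, $\Phi b(\xi,\eta)=b(\Phi\xi,\Phi\eta)=b(D_L\xi,D_L\eta)=D_L b(\xi,\eta)$. Since $L$ is order continuous, Propositions~\ref{char-sublattice-up-to-sgn} and~\ref{char-comp-sublattice-up-to-sgn} then imply that $F$ is a closed sublattice of $L$ up to a sign change.

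The crux is the reverse inclusion $F\subseteq\Delta(X)$. For $\zeta=[z_i]_\U\in F$ with $z_i\in X_i\subset X$, the equality $\Phi\zeta=D_L\zeta$ in $L_\U$ reads $\lim_{i,\U}\|\Delta z_i-\zeta\|_L=0$; since $\Delta z_i\in\Delta(X)$ and $\Delta(X)$ is norm-closed in $L$, this forces $\zeta\in\Delta(X)$. Hence $F=\Delta(X)$, and applying a suitable sign change $U$ on $L$ produces the closed vector sublattice $M:=U\Delta(X)$ of $L$, linearly isometric to $X$ via $U\circ\Delta$.

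For the KB-space addendum, $M$, being a closed sublattice of the KB-space $L$, is itself a KB-space (a bounded monotone sequence in $M$ converges in $L$ and $M$ is closed); transporting this lattice structure to $X$ via $U\circ\Delta$ exhibits $X$ as a KB-space Banach lattice, hence as a projection band in its bidual, with contractive band projection $\pi\colon X^{**}\to X$. Combining with the contraction $Q\colon L\to X^{**}$ introduced before the theorem, which satisfies $Q\circ\Delta=i_X$, one obtains a contractive projection
\[\Delta\circ\pi\circ Q\colon L\to\Delta(X),\]
and conjugating by $U$ yields the desired contractive projection $L\to M$. The main obstacle is the reverse inclusion $F\subseteq\Delta(X)$: a priori $F$ could be strictly larger, but the explicit form of $\Phi$ built from the paving converts the equality $\Phi\zeta=D_L\zeta$ into a genuine norm approximation of $\zeta$ by elements of $\Delta(X)$ along $\U$, which is exactly what is needed.
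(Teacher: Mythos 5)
Your proof is correct and takes essentially the same route as the paper's: your isometry $\Phi=\Delta_\U\circ\tilde\gamma$ (routed through $X_\U$) plays the role of the paper's composite $S=\tilde\jmath\circ J$, and your identification of the equalizer $F$ with $\Delta(X)$ is exactly the paper's intersection fact (\ref{intersection}), proved by the same closedness argument. The remaining steps --- DPIU forcing preservation of Lacey's function $b$, the characterization of sublattices up to a sign change via Propositions \ref{char-sublattice-up-to-sgn} and \ref{char-comp-sublattice-up-to-sgn}, and the contractive projection $\Delta\pi Q$ conjugated by the sign change --- coincide with the paper's.
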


\begin{proof}
If $T: \prod_\U X_i\to L$ is a given surjective isometry, then $T\Delta$ is a linear isometry from $X$ into $L$. Let $Y=T\Delta X$, which is a closed linear subspace of $L$ and $V: X\to Y, x\mapsto T\Delta x$ be the resulting surjective isometry from $X$ onto $Y$. Set also $Y_i=VX_i$ and $V_i: X_i\to Y_i, x\mapsto V x$ for each index $i$, and let $\widetilde V=\prod_\U V_i$. Then clearly the canonical embedding $\Delta_Y: Y\to \prod_\U Y_i$ verifies $\Delta_Y V={\widetilde V} \Delta$. Let $J\!=\! \widetilde{V}T^{-1}\!: L\to\prod_\U Y_i$, which is a surjective linear isometry from $L$ onto $\prod_\U Y_i$.
Then for every $x\in X$
\[JVx=JT\Delta x= \widetilde{V}\Delta x = \Delta_Y Vx\]
Since $V:X\to Y$ is surjective, it results that $Jy=\Delta_Y y$ for all $y\in Y$, that is, $J:L\to \prod_\U Y_i$ extends $\Delta_Y: Y\to \prod_\U Y_i$.
 
Thus we may suppose w.l.o.g. that $X$ is a closed linear subspace of $L$ and that the canonical embedding $\Delta: X\to \prod_\U X_i$  extends to a linear surjective isometry $J: L\to \prod_\U X_i$.

Since $X_i\subset X\subset L$ we have a natural inclusion $\prod_\U X_i\subset L_\U$. Let us be more formal by naming $j_i:X_i\to L$ the inclusion maps and $\tilde\jmath: \prod_\U X_i\to L_\U$ the ultraproduct map of the $j_i$'s. We denote by $j$ the inclusion map $X\to L$ and by $D_L$ the diagonal map $L\to L_\U$. The composition $S:=\tilde\jmath J$ is a linear isometry of $L$ into $L_\U$ which, by hypothesis, preserves disjointness (but need not to coincide with the diagonal embedding $D_L$).  We have the following commutative diagram

\[\xymatrix{&\prod_\U X_i\ar^{\tilde\jmath}[dr]&\\X\ar^{\Delta}[ur]\ar_{j}[r]\ar_j[rd]& L \ar_J[u]\ar_S[r]&L_\U\\&L\ar_{D_L}[ru]&}\]
Note that
\begin{equation}
\tilde\jmath\big(\prod_\U X_i\big) \cap D_L(L)= \tilde\jmath\circ \Delta(X)=D_L\circ j(X)\label{intersection}
\end{equation}

In other words, viewed in $L_\U$, the intersection of the spaces $\prod_UX_i$ and $L$ is nothing but  $X$. Indeed if $(x_i)$ is a bounded family in $\prod_i X_i$ and $y\in L$ then
\[ [j(x_i)]_\U = D_L y \iff \lim_{i,\U} j(x_i)= y \implies y\in j(X) \]
since $X$ is closed in $L$.

Now we prove that the map $b:L\times L\to L$ (see Notation \ref{Lacey-map}, resp. \ref{comp-Lacey-map} in the complex case) maps $X\times X$ into $X$. This will prove by Prop. \ref{char-sublattice-up-to-sgn} (resp. Prop. \ref{char-comp-sublattice-up-to-sgn}) that $X$ is a vector sublattice of $L$ up to a sign change $U$, and thus is linearly isometric to the vector sublattice $Y=U(X)$ (resp. $\overline U (X)$). 

Since $S$ is disjointness preserving and $D_L$ is a vector lattice isomorphism, we have for $x,y\in X$
\[Sb(jx,jy)= b(Sj x, Sj y)= b(D_L jx, D_L jy)= D_L b(jx,jy) \]
thus $D_L b(jx,jy) =\tilde\jmath\,  (J b(jx,jy))$, 
which implies by (\ref{intersection}) that $b(jx,jy)\in jX$: thus $b(X\times X)\subset X$ as was announced above.

Assume now that the Banach lattice $L$ does not contain $c_0$ as closed linear subspace, then the same is true of $X$. Since moreover $X$ is linearly isometric to a Banach lattice, it must be contractively complemented in its bidual \cite[Th. 1.c.4]{LT}. Let $\pi: X^{**}\to X$ be such a contractive projection. Let $Q:\prod_\U X_i\to X^{**}$ be the contractive linear map defined in eq. (\ref{eq:Q}), then $\pi QJ: L\to X$ is a contractive linear map which clearly coincides with the identity map on $X$; this defines a contractive linear projection $P$ on $L$ with range $X$. If $U$ is a sign change in $L$ such that $UX$ is a vector sublattice of $L$, then $UP\overline U$ is a contractive projection from $L$ onto $UX$.
\end{proof}

Specific applications of the preceding Theorem \ref{L-inverse ultraprod} will be given in \S  \ref{subsec:applic-C-pav} below and in section \ref{script-C}. Now we specialize Theorem  \ref{L-inverse ultraprod} to the case where $X_i=X$ for every $X$ and get at once the following ultraroot result:

\begin{corollary}\label{L-ultraroot} Let $L$ be an order continuous Banach lattice satisfying DPIU. Assume that $X$ is a Banach space which has an ultrapower that is linearly isometric to $L$. Then $X$ itself is linearly isometric to a closed sublattice of $L$, which can be chosen contractively complemented if moreover $L$ is a KB-space.
\end{corollary}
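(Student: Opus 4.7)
The plan is to deduce this corollary as a direct specialization of Theorem \ref{L-inverse ultraprod}, taking the constant paving $X_i = X$ indexed by the underlying set $I$ of an ultrafilter $\U$ witnessing the hypothesis that $X_\U$ is linearly isometric to $L$.

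First I would verify that the constant family $(X_i)_{i\in I}$ with every $X_i = X$ satisfies the definition of a paving of $X$: for every finite subset $F\subset X$ and every $\eps>0$, one has trivially $\mathrm{dist}(x,X_i)=0<\eps$ for all $x\in F$ and all $i\in I$. Similarly, any ultrafilter on $I$ is adapted to this constant family, since $\mathrm{dist}(x,X_i)=0$ for every $x\in X$ and every $i\in I$. In particular the ultrafilter $\U$ supplied by the hypothesis is adapted, and the ultraproduct $\prod_{i,\U} X_i$ literally coincides with the ultrapower $X_\U$, which by assumption is linearly isometric to $L$.

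All the hypotheses of Theorem \ref{L-inverse ultraprod} being in place, its conclusion applies verbatim: $X$ is linearly isometric to a closed sublattice of $L$, and this sublattice can be chosen contractively complemented in $L$ when $L$ is a KB-space. There is no substantive obstacle here; the real content of the corollary is the Lacey-function argument embedded in the proof of Theorem \ref{L-inverse ultraprod}, and the only task is to recognize that the ultraroot hypothesis matches the paving hypothesis in the degenerate case of a constant family.
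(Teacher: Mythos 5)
Your proposal is correct and is exactly the paper's argument: the paper obtains this corollary by specializing Theorem \ref{L-inverse ultraprod} to the constant paving $X_i=X$, for which any ultrafilter is adapted and the ultraproduct is the ultrapower $X_\U$. The only extra content in your write-up is the (trivial but welcome) explicit check that the constant family satisfies the paving and adaptedness definitions.
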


However we can say more about closed sublattices of $L$ that have an ultrapower linearly isometric to $L$.

\begin{proposition}\label{L-ultraroot-2}
 Let $L$ be a  Banach lattice satisfying DPIU, and assume that $E$ is a closed vector sublattice of $L$ such that $E_\U$ is linearly isometric to $L$. Then $E_\U$ is lattice isometric to $L$. If $L$ is a KB-space then $E$ is lattice isometric to a positively and contractively complemented closed vector sublattice of  $L$.
\end{proposition}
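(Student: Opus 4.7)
The plan for the first assertion is to upgrade a given surjective linear isometry $T\colon E_\U\to L$ to a lattice isometry using the DPIU hypothesis. Since $E$ is a closed vector sublattice of $L$, the ultrapower $\tilde\jmath\colon E_\U\to L_\U$ of the inclusion $E\hookrightarrow L$ is again a lattice isometric embedding, and as such it both preserves and reflects disjointness (the reflection comes from its injectivity together with the identity $|\tilde\jmath u|\wedge|\tilde\jmath v|=\tilde\jmath(|u|\wedge|v|)$). I would then form the composition $S:=\tilde\jmath\circ T^{-1}\colon L\to L_\U$, which is a linear isometry of $L$ into one of its own ultrapowers; DPIU forces $S$ to preserve disjointness, and since $\tilde\jmath$ reflects it, $T^{-1}\colon L\to E_\U$ must itself preserve disjointness.

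Next I would apply Fact~\ref{structure-dp-maps} (real case) or Fact~\ref{structure-comp-dp-maps} (complex case) to the surjective disjointness preserving isometry $T^{-1}$: both yield a polar decomposition $T^{-1}=U\,|T^{-1}|$ in which $|T^{-1}|\colon L\to E_\U$ is a lattice homomorphism, surjective and isometric because $T^{-1}$ is. Consequently $\Phi:=|T^{-1}|^{-1}\colon E_\U\to L$ is a bijective lattice homomorphism isometry, i.e.\ a lattice isometry, settling the first assertion.

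For the second assertion, assume $L$ is a KB-space, so that the closed sublattice $E$ is also a KB-space and admits a positive contractive projection $\pi_E\colon E^{**}\to E$. Let $D_E\colon E\to E_\U$ denote the diagonal lattice isometric embedding and $Q_E\colon E_\U\to E^{**}$ the contractive positive linear map $[x_i]_\U\mapsto w^*\text{-}\lim_{i,\U}x_i$ (positivity follows from the weak-$*$ closedness of the positive cone of $E^{**}$ and the fact that positive elements of $E_\U$ admit positive representatives). Using the lattice isometry $\Phi$ produced above, put $\phi:=\Phi\circ D_E\colon E\to L$; this is a lattice isometric embedding onto a closed sublattice $\phi(E)\subset L$. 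I would then define
\[
P\;:=\;\phi\circ\pi_E\circ Q_E\circ\Phi^{-1}\colon L\longrightarrow L.
\]
Each of the four factors is positive and contractive, hence so is $P$. Since $\Phi^{-1}\circ\phi=D_E$ and $\pi_E\circ Q_E\circ D_E=\mathrm{id}_E$ (because $Q_E\circ D_E$ is the canonical embedding of $E$ into $E^{**}$, on which $\pi_E$ restricts to the identity), one gets $P\circ\phi=\phi$, so $P$ is a positive contractive projection from $L$ onto the lattice-isometric copy $\phi(E)$ of $E$.

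The only genuinely non-routine step is the first one, namely the recognition that DPIU should be applied to $\tilde\jmath\circ T^{-1}$ (not to $T$, which points into $L$ rather than out of it) together with the observation that the lattice embedding $\tilde\jmath$ reflects disjointness, so that the DPIU conclusion on $S$ transfers back to $T^{-1}$. Once this is in place, everything else reduces to the polar decomposition results of Section~\ref{section:Preliminaries} and the standard KB-space biadjoint argument already used in the proof of Theorem~\ref{L-inverse ultraprod}.
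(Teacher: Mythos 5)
Your proof is correct and follows essentially the same route as the paper's: the paper also composes the surjective isometry $L\to E_\U$ with the ultrapower of the inclusion $E\hookrightarrow L$, invokes DPIU, reflects disjointness back through the lattice embedding, and then uses the polar decomposition (Facts \ref{structure-dp-maps}/\ref{structure-comp-dp-maps}) to extract the lattice isometry. Your construction of the positive contractive projection via $E^{**}$ is just a mild repackaging of the paper's argument (which routes through $F=|J|^{-1}D_E(E)$ and $F_\U$, $F^{**}$ instead), so nothing substantive differs.
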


\begin{proof}
 a) Let $J: L\to E_\U$ be a surjective linear isometry, $i:E\to L$ be the inclusion map  and $i_\U: E_\U\to L_\U$ be its ultrapower map. Then by property DPIU the linear isometry $i_\U J: L\to L_\U$ is disjointness preserving. Since $i_\U$ is a lattice embedding it results that $J$ is also disjointness preserving. By the structure theorems for disjointness preserving maps, Fact \ref{structure-dp-maps} or \ref{structure-comp-dp-maps}, its modulus $|J|$ exists and is a vector lattice isometry from $L$ onto $E_\U$.
 
 b) Consider the vector lattice isomorphism $V= |J|^{-1}D_E$, its range $F=V(E)$ is a closed vector sublattice of $L$, and let $V_\U: E_\U\to F_\U$ be the ultrapower map of $V: E\to F$. Then $K= V_\U |J|: L\to F_\U $ is a surjective lattice isometry wich extends $D_F: F\to F_\U$. Since $F$ is a KB-space it is positively contractively complemented in its bidual by some projection $\pi$. Let $Q$ be the weak* limit  map $F_\U\to F^{**}, [x_i]_\U\mapsto w^*\lim x_i$, then $\pi QK: L\to L$ is a positive contractive projection from $L$ onto $F$. 
 \end{proof}

\begin{remark}
 It follows from Proposition  \ref{L-ultraroot-2} that in Corollary \ref{L-ultraroot} a closed vector sublattice of $L$,  linearly isometric to $X$ may be found that is not only contractively, but also positively complemented. Moreover if the ultrapower of $X$ is relative to a countably incomplete ultrafilter, then by Fact \ref{fact:ultra-oc}, the additional hypothesis ``$L$ is a KB-space'' at the end of  the  statement \ref{L-ultraroot} may be omitted.  
\end{remark}

\subsection{Application to Banach axiomatizability of classes of Banach lattices}

\begin{theorem}\label{axiomatizability1}  Let $\C$  be an axiomatizable class of order continuous Banach lattices with property DPIU.  Then the class $\C^B$ of Banach spaces linearly isometric to members of $\C$ is axiomatizable.
\end{theorem}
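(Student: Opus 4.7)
My plan is to verify the three closure conditions defining axiomatizability of $\C^B$ as a class of Banach spaces: closure under surjective linear isometries, under ultraproducts, and under ultraroots.

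Closure under surjective linear isometries is immediate from the definition of $\C^B$: if $Y$ is linearly isometric to $X\in\C^B$ and $X$ is linearly isometric to some $L\in\C$, then $Y$ is linearly isometric to $L$, hence $Y\in\C^B$. For closure under ultraproducts, given a family $(X_i)_{i\in I}\subset\C^B$ with surjective linear isometries $T_i:X_i\to L_i$ onto members $L_i\in\C$, I would form the ultraproduct $\prod_\U T_i$, which is a surjective linear isometry from $\prod_\U X_i$ onto $\prod_\U L_i$. Since $\C$ is closed under ultraproducts, $\prod_\U L_i\in\C$, whence $\prod_\U X_i\in\C^B$.

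The substantive step is closure under ultraroots. Assume that $X$ is a Banach space and some ultrapower $X_\U$ is linearly isometric to $L\in\C$. The plan is to produce a closed vector sublattice $E$ of $L$ that is (a) linearly isometric to $X$ and (b) an element of $\C$. For (a) I invoke Corollary \ref{L-ultraroot}, applicable because $L$ is an order continuous Banach lattice with DPIU; this gives a closed vector sublattice $E\subset L$ linearly isometric to $X$. For (b) I first observe that the linear isometry $X\cong E$ yields $E_\U\cong X_\U\cong L$ as Banach spaces, which is precisely the hypothesis of Proposition \ref{L-ultraroot-2}; that proposition then upgrades the isometry $E_\U\cong L$ from a linear one to a \emph{lattice} one. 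Thus $L$ is a Banach lattice ultrapower of $E$, and the axiomatizability of $\C$ as a class of Banach lattices (which includes closure under lattice ultraroots) forces $E\in\C$. Hence $X$ is linearly isometric to a member of $\C$, i.e. $X\in\C^B$.

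The main obstacle, and the reason the theorem is non-trivial, is bridging the Banach-space and Banach-lattice categories: the starting datum $X_\U\cong L$ is only a \emph{linear} isometric identification, whereas the axiomatizability of $\C$ provides closure only under \emph{lattice} ultraroots. The DPIU hypothesis is precisely what enables this bridge, since by Facts \ref{structure-dp-maps} and \ref{structure-comp-dp-maps} a disjointness-preserving linear isometry admits a modulus that is itself a lattice isometry. The work was already done upstream in Corollary \ref{L-ultraroot} (to realize $X$ inside $L$ as a sublattice) and in Proposition \ref{L-ultraroot-2} (to replace the linear ultrapower relation by a lattice one); at this stage the proof is essentially a packaging argument chaining these two results together with the assumed Banach-lattice axiomatizability of $\C$.
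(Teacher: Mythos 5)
Your proposal is correct and follows essentially the same route as the paper: closure under ultraproducts is immediate, and for ultraroots you chain Corollary \ref{L-ultraroot} (realizing $X$ as a closed sublattice $E$ of $L$) with Proposition \ref{L-ultraroot-2} (upgrading the linear isometry $E_\U\cong L$ to a lattice isometry) and then invoke the lattice axiomatizability of $\C$ to get $E\in\C$. The only addition over the paper's text is your explicit verification of closure under surjective isometries and the ultraproduct-of-isometries argument, both of which the paper treats as obvious.
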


\begin{proof}  Clearly $\C^B$ is closed under ultraproducts since $\C$ is. As for the ultraroot condition, assume that a Banach space $X$ has an ultrapower $X_\U$ which is linearly isometric to a member $L$ of  $\C$, then, by Corollary \ref{L-ultraroot},  $X$ itself is linearly isometric to a sublattice $E$ of $L$, and by Proposition \ref{L-ultraroot-2}, the lattice ultrapower $E_\U$ is lattice isometric to $L$. By axiomatizability of $\C$, this class contains the lattice $E$  and thus the space $X$ belongs to~$\C^B$.
\end{proof}

The following corollary states a well known fact (see e.g. Theorem 6.3 in the survey \cite{CWH}), that may be given now a new proof.

\begin{corollary} For $1\le p<\infty$ the class of $L_p$-spaces is axiomatizable (in the language of Banach spaces).
\end{corollary}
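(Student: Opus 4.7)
The plan is to apply Theorem \ref{axiomatizability1} with $\C$ equal to the class of $L_p(\mu)$-spaces viewed as Banach lattices (over arbitrary measure spaces). Three hypotheses need to be checked: that $\C$ is axiomatizable as a class of Banach lattices, that its members are order continuous, and that each $L_p$ in $\C$ has property DPIU.

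Order continuity of $L_p$ for $1\le p<\infty$ is standard. Banach-lattice axiomatizability of $\C$ is equally classical: by the Kakutani--Bohnenblust--Nakano representation, $L_p$-lattices are exactly those Banach lattices satisfying the disjoint $p$-additivity condition $\|x+y\|^p=\|x\|^p+\|y\|^p$ for every pair of disjoint elements $x,y$, a condition that is visibly preserved under lattice isometries, lattice ultraproducts and lattice ultraroots. (Alternatively, one may invoke directly that ultraproducts of $L_p$-spaces are $L_p$-spaces, after Dacunha-Castelle--Krivine, and use that any lattice ultraroot of an $L_p$-space inherits the disjoint $p$-additivity by restriction.)

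The main step---and the only one requiring genuine input---is DPIU for $p\ne 2$. Since $\C$ is closed under ultrapowers, every ultrapower of an $L_p$-space is again an $L_p$-space; hence DPIU reduces to showing that every linear isometric embedding between two $L_p$-spaces is disjointness preserving. This is exactly the content of Lamperti's theorem, which holds precisely when $p\ne 2$. With DPIU established, Theorem \ref{axiomatizability1} applies and delivers the axiomatizability of $\C^\mathcal B$ in the Banach space language for all $p\in[1,\infty)\setminus\{2\}$.

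The case $p=2$ is not covered by this argument, since Hilbert-space isometries generally fail to preserve disjointness. But $L_2$-spaces coincide with the Hilbert spaces, whose axiomatizability in the Banach space language is classical and immediate from the parallelogram identity $\|x+y\|^2+\|x-y\|^2=2\|x\|^2+2\|y\|^2$, a closed condition on pairs that is clearly preserved by surjective linear isometries, ultraproducts, and ultraroots. The hard part of the whole proof is thus Lamperti's theorem, which feeds DPIU into the general machinery of the paper; everything else is essentially bookkeeping.
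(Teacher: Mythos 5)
Your proposal is correct and follows essentially the same route as the paper: dispose of $p=2$ separately as the (classically axiomatizable) Hilbert space case, establish Banach-lattice axiomatizability of the $L_p$ class, and obtain DPIU for $p\ne 2$ from Lamperti's theorem (the paper phrases this via the equality case in Clarkson's inequality characterizing disjointness) before invoking Theorem \ref{axiomatizability1}. The only cosmetic difference is that the paper gets lattice axiomatizability from closure under ultraproducts and sublattices rather than from the disjoint $p$-additivity characterization, but these amount to the same thing.
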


\begin{proof} The case $p=2$ being trivial we may assume $p\ne 2$. The class of $L_p$ Banach lattices is axiomatizable since it is closed under ultraproducts and sublattices. On the other hand for every $p\ne 2$, every linear isometry from a $L_p$-space into another one preserves disjointness, since in $L_p$ spaces the disjointness of two elements $x$ and $y$ is characterized by the equation
\begin{equation}
\|x+y\|^p+\|x-y\|^p=2(\|x\|^p+\|y\|^p)\tag{$EC_p$}
\end{equation}
(equality case in the Clarkson inequality), see e.g. \cite[Cor. 2.1]{La}. Thus the class of $L_p$-Banach lattices has property DPIU.
\end{proof}

Several new examples of classes satisfying the hypotheses of Theorem \ref{axiomatizability1} will be given in the forthcoming sections. In accordance with Remark \ref{rem:unif-ultra-oc} each of these classes  will consist in uniformly  $q$-concave Banach lattices. 

\subsection{Application to $\C$-pavable spaces}\label{subsec:applic-C-pav} \hfil

\smallskip
Recall that given a pair $(X,Y)$ of normed spaces (resp. vector lattices), and $\lambda\ge 1$, a linear (resp. vector lattice homomorphism) $T: X\to Y$ is called a $\lambda$-embedding if 
$$\forall x\in X,\ \lambda^{-1}\|x\|\le\|Tx\|\le \lambda\|x\|$$
$Z=TX$ will be said to be ``$\lambda$-isomorphic to $X$'' or a ``$\lambda$-isomorphic copy of $X$ in 
$Y$''.
 
\begin{definition}\label{def:pavable}
Let $\C$ be a class of Banach spaces (resp. Banach lattices). We say that a Banach space (resp. lattice) $X$ is $\C$-pavable if for every  finite subset $F$ of $X$ and every $\eps>0$ there exists a $(1+\eps)$-copy $Y$ in $X$ of some member $G$ of $\C$ 
such that every point of $F$ lies at a distance at most $\eps$ from $Y$.
\end{definition}

\begin{remark}\label{rem: paving}
Let $I=\mathcal P_f(X)\times (0,\infty)$, ($\mathcal P_f(X)$ is the set of finite subsets of $X$) and for each $i=(F,\eps)\in I$ let $G_i$ be a member of $\C$ and $T_i:G_i\to X$ a linear isomorphism (resp. a vector lattice isomorphism) as in Definition \ref{def:pavable}. Set $X_i=T_iG_i$ for all $i\in I$. Then $(X_i)$ is a paving of $X$ and one may find an adapted ultrafilter $\U$ such that $\|T_i\|\to 1$ and $\|T_i^{-1}\|\to 1$ with respect to $\U$: e.g., order $I$ by $(F,\eps)\le (G,\delta)$ iff $F\subset G$ and $\eps\ge\delta$ and take any ultrafilter containing all the sets $S_i:=\{j\in I, j\ge i\}$. The maps $T_i$ induce a map $\tilde T:\prod_\U G_i\to \prod_\U X_i$ which is a surjective linear isometry (resp. an isometric vector lattice isomorphism).
\end{remark}

\begin{remark}
In the Banach space setting there is an equivalent definition which sounds more customary:
{\it a Banach space $X$ is $\C$-pavable if for every  finite-dimensional linear subspace $E$ of $X$ and every $\eps>0$ there exists a member $G$ of $\C$ and an $(1+\eps)$-copy of $G$ in $X$ which contains $E$.}
\end{remark}
\begin{proof}
Indeed apply the definition \ref {def:pavable} with $F=\{x_1,\dots x_d\}$ an Auerbach basis of $E$, that is
\[ \sup_i|\lambda_i|\le \big\|\sum_i\lambda_i x_i\big\|\le \sum_i |\lambda_i|\]
for every system of scalars $\lambda_1,\dots,\lambda_d$.  We get a linear subspace $Y$ of $X$ which is $(1+\eps)$-linearly isomorphic to a member of $\C$ and $d$ points $y_1,\dots, y_d$ in $Y$ with $\|x_i-y_i\|\le \eps$, $i=1,\dots, d$. Then
\[\big\| \sum_i \lambda_i(y_i-x_i)\big\|\le \eps \sum_i |\lambda_i|\le d\eps\sup_i |\lambda_i|\le d\eps \big\| \sum_i \lambda_ix_i\big\|
\]
and
\[ (1-d\eps)\big\| \sum_i \lambda_ix_i\big\| \le  \big\| \sum_i \lambda_iy_i\big\|\le (1+d\eps)\big\| \sum_i \lambda_ix_i\big\|
\]
Let $E_1=\mathrm{span}\{y_1,\dots,y_d\}$, then by the Kadec-Snobar theorem there is a projection $P:Y\to E_1$ of norm less than $\sqrt d$. Let $V=\ker P$ be the associated supplement of $E_1$ in $Y$, then $E+V$
is a linear subspace of $X$ containing $E$. We exhibit a good isomorphism $S$ from $Y$ onto $E+V$ by setting for any $y=v+\sum_i\lambda_i y_i$ with  $v\in V$ 
\[ Sy=v+\sum_i\lambda_i x_i \]
Indeed
\[\|Sy-y\|\le \big\| \sum_i \lambda_i(y_i-x_i)\big\|\le d\eps\big\| \sum_i \lambda_ix_i\big\| \le {d\eps\over 1-d\eps}\big\| \sum_i \lambda_iy_i\big\| \le {d\sqrt d\,\eps\over 1-d\eps}\|y\|
\]
Then for $\delta={d\sqrt d\,\eps\over 1-d\eps}$ we have $(1-\delta)\|y\|\le \|Sy\|\le (1+\delta)\|y\|$.
\end{proof}

\begin{lemma}\label{embedding}
Let $\C$ be a class of Banach spaces (resp. Banach lattices). If a Banach space (resp. lattice) $X$ is $\C$-pavable then it embeds in an ultraproduct of members of $\C$. In the lattice setting, if moreover $X$ does not contain $c_0$, this embedded copy of $X$ is positively contractively complemented.
\end{lemma}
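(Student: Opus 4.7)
The plan is to combine Remark~\ref{rem: paving} with the canonical embedding $\Delta$ introduced just before Theorem~\ref{L-inverse ultraprod}. First, invoking Remark~\ref{rem: paving}, I would fix an index set $I$, members $G_i\in\C$, a paving $(X_i)_{i\in I}$ of $X$ by $(1+\eps_i)$-copies $X_i=T_iG_i$, and an adapted ultrafilter $\U$ on $I$ along which $\|T_i\|$ and $\|T_i^{-1}\|$ both tend to $1$. The induced ultraproduct map $\widetilde T=\prod_\U T_i:\prod_\U G_i\to\prod_\U X_i$ is then a surjective linear isometry in the Banach space setting, and a surjective isometric vector lattice isomorphism in the lattice setting.

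Next, since $(X_i)$ is a paving of $X$ with adapted ultrafilter $\U$, the canonical map $\Delta:X\to\prod_\U X_i$ is a linear isometry. In the lattice setting the $X_i$ are vector sublattices of $X$, so $\Delta$ is moreover a lattice isometry, as observed just before Theorem~\ref{L-inverse ultraprod} (directly: if $x_i\to x$ and $y_i\to y$ with $x_i,y_i\in X_i$, then $x_i\vee y_i\in X_i$ and $x_i\vee y_i\to x\vee y$ by continuity of the lattice operations). Composing yields $\widetilde T^{-1}\circ\Delta:X\to\prod_\U G_i$, the desired linear (resp.\ lattice) isometric embedding of $X$ into an ultraproduct of members of $\C$.

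For the complementation assertion, assume we are in the Banach lattice setting and $X$ contains no copy of $c_0$. Then $X$ is a KB-space, hence a projection band in its bidual via a positive contractive projection $\pi:X^{**}\to X$. Recall from the discussion before Theorem~\ref{L-inverse ultraprod} the contractive weak-$*$ limit map $Q:\prod_\U X_i\to X^{**}$ defined by equation~(\ref{eq:Q}), which satisfies $Q\circ\Delta=i_X$. Since each $X_i$ is a sublattice of $X$ and the positive cone of $X^{**}$ is weak-$*$ closed, $Q$ is positive. Consequently $P_0:=\Delta\circ\pi\circ Q$ is a positive contractive projection on $\prod_\U X_i$ with range $\Delta(X)$, and $\widetilde T^{-1}\circ P_0\circ\widetilde T$ is a positive contractive projection on $\prod_\U G_i$ with range $\widetilde T^{-1}(\Delta(X))$, namely the copy of $X$ produced in the first part. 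The argument is essentially bookkeeping; the only point needing any care is tracking positivity across the ultraproduct, which is handled by the weak-$*$ closedness of the positive cone in the bidual.
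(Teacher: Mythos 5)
Your proposal is correct and follows essentially the same route as the paper: Remark~\ref{rem: paving} to reduce to the canonical embedding $\Delta:X\to\prod_\U X_i$ composed with the isometry onto $\prod_\U G_i$, then the KB-space projection $\pi:X^{**}\to X$ together with the positive contractive weak-$*$ limit map $Q$ to produce the positive contractive projection $\Delta\pi Q$ onto the copy of $X$. The only difference is that you spell out transporting the projection through $\widetilde T$, which the paper leaves implicit.
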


\begin{proof}
By Remark \ref{rem: paving} there is a paving $(X_i)_{i\in I}$ of $X$ by linear subspaces (resp. vector sublattices) and an adapted ultrafilter $\U$ such that $\prod_\U X_i$ is linearly isometric (resp. and lattice isomorphic) to an ultraproduct $\prod_\U G_i$ of members of $\C$. Then consider the canonical embeding $\Delta$ of $X$ into $\prod_\U X_i$ defined in subsection \ref{subsec:MainResult}.

In the lattice case, since the Banach lattice $X$ does not contains $c_0$, there is positive  contractive map 
$\pi: X^{**}\to X$. The weak*-limit map $Q:\prod_\U X_i\to X^{**}$ defined in eq. (\ref{eq:Q}) is also positive and contractive and 
$\pi Q\Delta=\pi i_X=\mathrm{id}_X$, hence $P=\Delta\pi Q$ is a (positive, contractive) projection from $\prod_\U X_i$ onto $\Delta X$.
\end{proof}

\begin{corollary}\label{cor:C=C^B}
a) If a class $\C$ of order continuous Banach lattices is closed under ultraproducts and positive contractive projections onto sublattices, then it equals the class of  $\C$-pavable Banach lattices. 

b) If moreover $\C$ consists of Banach lattices with property DPIU, then the class $\C^B$ (of Banach spaces which are linearly isometric to some member of $\mathcal C$) equals the class of $\C^B\!$
-pavable Banach spaces. 
\end{corollary}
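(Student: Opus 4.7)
Both inclusions have a trivial direction (every element of the relevant class paves itself by taking $G=X$); the content lies in the reverse inclusions.

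For (a), let $X$ be a $\C$-pavable Banach lattice. Invoking Remark \ref{rem: paving} I extract a paving $(X_i)_{i\in I}$ of $X$ by closed vector sublattices, each $(1+\eps_i)$-vector-lattice-isomorphic to some $G_i\in\C$, together with an adapted ultrafilter $\U$ under which the ultraproduct map $\prod_{\U} G_i \to \prod_{\U} X_i$ is an isometric lattice isomorphism. Closure of $\C$ under ultraproducts places $L:=\prod_{\U}G_i$ in $\C$; in particular $L$ is order continuous, and by Remark \ref{rem:unif-ultra-oc} $L$ is a KB-space. The lattice case of Lemma \ref{embedding} then exhibits $X$ as a positively and contractively complemented closed vector sublattice of $\prod_{\U} X_i$, hence, after transporting across the lattice isomorphism, of $L$. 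The closure hypothesis gives $X\in\C$.

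For (b), let $X$ be $\C^B$-pavable. The same construction, now with linear isomorphisms in place of vector lattice ones, yields $\prod_{\U}X_i$ linearly isometric to $L:=\prod_{\U}H_i\in\C$, where $H_i\in\C$ is a chosen $\C$-model of the $\C^B$-space $G_i$ that is $(1+\eps_i)$-linearly isomorphic to $X_i$. Again $L$ is an order continuous KB-space, and now it also has property DPIU. Theorem \ref{L-inverse ultraprod} then produces a linear isometry $V\colon X\to L$ whose image is a contractively complemented closed vector sublattice $E\subseteq L$, and the conclusion reduces to showing $E\in\C$.

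The main step, and the point where DPIU is actually used, is to upgrade that contractive projection to a \emph{positive} contractive projection, as demanded by the closure hypothesis. Setting $E_i := V(X_i)$, the linear isometries $\prod_{\U} E_i \cong \prod_{\U} X_i \cong L$ combine with the natural embedding $\tilde\jmath_E\colon \prod_{\U} E_i \to E_{\U}$ (coming from $E_i\subseteq E$) to produce a linear isometry $T\colon L\to E_{\U}$. Its further composition $i_{\U} T\colon L\to L_{\U}$ with the ultrapower of the inclusion $i\colon E\to L$ is a linear isometry from $L$ into its own ultrapower, hence disjointness-preserving by DPIU; since $i_{\U}$ is a lattice embedding, $T$ itself is disjointness-preserving. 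Fact \ref{structure-dp-maps}/\ref{structure-comp-dp-maps} then writes $T=U|T|$ with $|T|\colon L\to E_{\U}$ an isometric lattice embedding, and a direct computation using the paving property (namely that for $x=V(v)\in E$ any representative $(v_i)$ of $\Delta(v)\in\prod_{\U}X_i$ satisfies $\|V v_i-x\|\to 0$ along $\U$) shows $T|_E = D_E$, hence also $|T||_E = D_E$. Composing $|T|$ with the weak-$*$ limit map $Q\colon E_{\U}\to E^{**}$ and with a positive contractive projection $\pi\colon E^{**}\to E$ (which exists because $E$, a closed sublattice of the KB-space $L$, is itself a KB-space) yields a positive contractive operator $\pi Q|T|\colon L\to E$ that restricts to the identity on $E$; this is the required positive contractive projection. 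The closure hypothesis then gives $E\in\C$, so $X\cong E\in\C^B$.
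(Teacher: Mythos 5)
Your proof is correct and follows the same route as the paper's: part (a) is exactly the paper's argument (Fact \ref{fact:ultra-oc} to get the KB property, then the lattice case of Lemma \ref{embedding}, then the closure hypothesis), and part (b) likewise rests on Theorem \ref{L-inverse ultraprod} followed by closure under positive contractive projections. The one place where you do genuinely more than the paper is the positivity of the projection in (b): the paper asserts that Theorem \ref{L-inverse ultraprod} yields a \emph{positively} and contractively complemented sublattice, whereas the statement of that theorem only guarantees a contractive complement, so an extra argument is really needed. Your construction --- composing $L\cong\prod_\U E_i\hookrightarrow E_\U$ to get an isometry $T$, invoking DPIU through $i_\U T$ to factor $T=U|T|$, checking $|T|$ restricts to $D_E$ on $E$, and then forming $\pi Q|T|$ --- supplies exactly the missing justification, and it is the same device the paper itself deploys in Proposition \ref{L-ultraroot-2} and in the script-$\C$ theorem of Section \ref{script-C}. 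So your write-up is, if anything, more complete than the paper's own proof of this corollary.
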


\begin{proof}
a) By Fact \ref{fact:ultra-oc}, members of  $\C$ are KB-spaces.  
Then by Lemma \ref{embedding}, if $X$ is a $\C$-pavable Banach lattice  it embeds isometrically as Banach lattice into an ultraproduct $\prod_\U G_i$ of members of $\C$ and the embedded copy is positively and contractively complemented in $\prod_\U G_i$. By the hypothesis, $\prod_\U G_i$ and $X$ also belong to $\C$.
\smallskip

b) If $X$ is a $\C^B$-pavable Banach space, there is a paving of $X$ by a family of linear subspaces and an adapted ultrafilter $\U$ such that $\prod_\U X_i$ is linearly isometric to an ultraproduct of members of $\C$, that is to a member $L$ of $\C$. Since members of $\C$ do not contain $c_0$, it  results from Theorem \ref{L-inverse ultraprod} that $X$ is linearly isometric to a positively and contractively complemented sublattice of $L$, which by the hypothesis must belong to $\C$. Thus $X$ belongs to $\C^B$.
\end{proof}

\section{script-$\C$ classes}\label{script-C}

In our definition of $\C$-pavable Banach spaces (or lattices) there is no requirement on the dimension of the paving subspaces. If we require that the paving subspaces are finite dimensional, we get the notion of ``script-$\C$'' spaces.

\begin{definition}
Given a class $\C$ of Banach spaces (resp. lattices), we say that a Banach space (resp. Banach lattice) is a script-$\C$-space (-lattice), in short $\mathcal{SC}$-space (-lattice), if it is $\C_f$-pavable, where $\C_f$ is the subclass of finite dimensional members of $\C$.
\end{definition}

Note that the class of script-$\C$-Banach spaces (resp. lattices) coincides with the class of $\C$-pavable Banach spaces (lattices) iff every member of $\C$ is a script-$\C$-Banach space (lattice).

Let us make a short digression about axiomatizability of classes $\mathcal{SC}$, and deduce an informal ``axiomatization" of axiomatizable classes $\C$ such that $\C=\mathcal{SC}$. 

\begin{proposition}
\parindent=0pt
Let $\C$ be a class of Banach spaces or lattices.\smallskip

i) $\mathcal{SC}$ is closed under ultraroots.\smallskip

ii) $\mathcal{SC}$ is closed under ultraproducts iff for each natural number $n$ and any real number $\eps\in(0,1]$, there is a natural number $m(n,\eps)$ such every member $X$ of $\C_f$ verifies the following property:

\smallskip
$(P_{n,\eps,m})$: every subset $F$ of cardinality $n$  of the unit ball of $X$ lies in an $\eps$-neighborhood of an $(1+\eps)$-linear (resp. vector lattice) isomorphic copy in $X$ of a member $G$ of $\C_f$ of dimension  $\le m$.

\smallskip
Moreover in this case every member of $\mathcal{SC}$ verifies $(P_{n,\eps,m'})$, with $m'=m(n,\eps/9)$.
\end{proposition}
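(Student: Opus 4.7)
The plan is to treat (i) and the two directions of (ii) in turn, inserting the moreover claim between them.

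For (i), take a finite $F=\{x_1,\dots,x_n\}\subset X$ and $\eps>0$, and apply the $\mathcal{SC}$-pavability of $X_\U$ to $D_X(F)$ with parameter $\eps/2$: this yields $G\in\C_f$ of some dimension $d$ and a $(1+\eps/2)$-embedding $T:G\to X_\U$ such that each $D_X(x_j)$ lies within $\eps/2$ of $TG$. Fix a basis $g_1,\dots,g_d$ of $G$ and represent $Tg_k=[y_{k,i}]_\U$; the linear maps $T_i:G\to X$ defined by $T_ig_k=y_{k,i}$ satisfy $[T_ig]_\U=Tg$ for every $g\in G$. By finite dimensionality and compactness of the unit ball of $G$, both $\|T_i\|\to\|T\|$ and $\|T_i^{-1}\|\to\|T^{-1}\|$ along $\U$; similarly, for $z^{(j)}\in G$ with $\|Tz^{(j)}-D_X(x_j)\|\le\eps/2$ one has $\lim_\U\|T_iz^{(j)}-x_j\|\le\eps/2$. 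Picking $i_0$ in the resulting $\U$-large set yields a $(1+\eps)$-embedding $T_{i_0}:G\to X$ with each $x_j$ within $\eps$ of $T_{i_0}G$.

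For the ``only if'' direction of (ii), assume contrapositively that for some $n$ and $\eps\in(0,1]$ the property $(P_{n,\eps,m})$ fails for every $m\in\mathbb N$, so that one may exhibit $X_m\in\C_f$ and $F_m=\{x_{1,m},\dots,x_{n,m}\}\subset B_{X_m}$ violating $(P_{n,\eps,m})$. Since members of $\C_f$ are trivially $\C_f$-pavable, $\C_f\subset\mathcal{SC}$, so the ultraproduct $X=\prod_\U X_m$ along any free ultrafilter on $\mathbb N$ lies in $\mathcal{SC}$. Applying $\mathcal{SC}$-pavability of $X$ to the diagonal $n$-tuple $F=\{[x_{j,m}]_\U\}_{j=1}^n$ with parameter $\eps/2$ and pulling it down exactly as in (i) yields, for $\U$-almost all $m$, a fixed $G\in\C_f$ of some dimension $d$ and a $(1+\eps)$-embedding of $G$ into $X_m$ with each $x_{j,m}$ within $\eps$ of its range; any $m\ge d$ in this $\U$-large set then contradicts the choice of $X_m$ and $F_m$.

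For the moreover claim, let $X\in\mathcal{SC}$, $F=\{x_1,\dots,x_n\}\subset B_X$, and $\eps\in(0,1]$. $\mathcal{SC}$-pavability applied with parameter $\eps/9$ provides $H\in\C_f$ and a $(1+\eps/9)$-embedding $T:H\to X$ with each $x_j$ within $\eps/9$ of $TH$. Pick $y_j\in TH$ with $\|y_j-x_j\|\le\eps/9$ and set $z_j=T^{-1}y_j\in H$, of norm at most $(1+\eps/9)^2$. Applying the uniform property $(P_{n,\eps/9,m(n,\eps/9)})$ inside $H\in\C_f$ to the rescaled tuple $\{z_j/(1+\eps/9)^2\}_{j=1}^n\subset B_H$ produces $G\in\C_f$ of dimension at most $m(n,\eps/9)$ and a $(1+\eps/9)$-embedding $S:G\to H$ approximating each $z_j/(1+\eps/9)^2$ to within $\eps/9$. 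A direct computation then shows that $TS:G\to X$ is a $(1+\eps/9)^2$-embedding and that each $x_j$ lies within $\eps/9+(1+\eps/9)^3\eps/9$ of $TSG$; for $\eps\le 1$ these bounds are at most $1+\eps$ and $\eps$ respectively.

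For the ``if'' direction, given $X_i\in\mathcal{SC}$ and $X=\prod_\U X_i$, take $F=\{\xi_j\}_{j=1}^n\subset B_X$ (after scaling) and $\eps>0$, represent $\xi_j=[x_{j,i}]_\U$ with $\|x_{j,i}\|\le 1$, and apply the moreover claim to each $X_i$ with parameter $\eps/4$: this yields $G_i\in\C_f$ of dimension at most $m':=m(n,\eps/36)$ and a $(1+\eps/4)$-embedding $T_i:G_i\to X_i$ with each $x_{j,i}$ within $\eps/4$ of $T_iG_i$. Passing to a $\U$-large subset of indices, we may assume $\dim G_i=d\le m'$ is constant, and then the ultraproduct map $\tilde T:\tilde G:=\prod_\U G_i\to X$ is a $(1+\eps/4)$-embedding with each $\xi_j$ within $\eps/4$ of $\tilde T\tilde G$. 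The main obstacle is that the $d$-dimensional space $\tilde G$ need not itself lie in $\C_f$; this is overcome by fixing an Auerbach basis $(e_k)_{k=1}^d$ of $\tilde G$ with representatives $e_k=[e_{k,i}]_\U$, $e_{k,i}\in G_i$, so that the linear maps $\phi_i:\tilde G\to G_i$, $\sum a_ke_k\mapsto\sum a_ke_{k,i}$, satisfy $[\phi_i(g)]_\U=g$ and hence $\|\phi_i(g)\|\to\|g\|$ along $\U$ pointwise, which upgrades by equiboundedness to uniform convergence on the compact unit ball of $\tilde G$, giving $\|\phi_i\|,\|\phi_i^{-1}\|\to 1$. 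Choosing $i_0$ such that $\phi_{i_0}$ is a $(1+\delta)$-isomorphism for $\delta$ small enough that $(1+\eps/4)(1+\delta)\le 1+\eps$, the composition $\tilde T\circ\phi_{i_0}^{-1}:G_{i_0}\to X$ is a $(1+\eps)$-embedding with image $\tilde T\tilde G$, so each $\xi_j$ lies within $\eps$ of its range; since $G_{i_0}\in\C_f$ has dimension $d\le m'$, we conclude $X\in\mathcal{SC}$.
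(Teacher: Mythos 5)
Your proof is correct and follows essentially the same route as the paper's: part (i) by lifting a finite--dimensional almost--isometric copy from the ultrapower back to $\U$-most coordinates; the ``moreover'' claim by the two--step composition with the $\eps/9$ bookkeeping; the ``if'' direction by taking ultraproducts of the paving subspaces and replacing the limit space $\tilde G$ (which need not lie in $\C_f$) by a $\U$-typical factor $G_{i_0}$; and the ``only if'' direction by the same contrapositive compactness argument. Your Auerbach-basis argument for $\|\phi_i\|,\|\phi_i^{-1}\|\to 1$ is just an inlined proof of the fact the paper cites from Heinrich.

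The one point you should patch concerns the lattice half of the statement. In part (i) and in the ``if'' direction you fix an arbitrary (Auerbach) basis of the finite-dimensional space and lift it coordinatewise; the resulting maps $T_i$ (resp.\ $\phi_{i_0}$) are then only linear isomorphisms. But when $\C$ is a class of Banach \emph{lattices}, the definition of $\mathcal{SC}$ requires $(1+\eps)$-\emph{vector lattice} isomorphic copies, and an Auerbach basis of a finite-dimensional vector lattice need not consist of disjoint positive elements, so $\phi_{i_0}$ as you build it need not be a lattice homomorphism. The fix is routine and is exactly the remark the paper makes: take the basis to be the disjoint positive atoms of $G$ (resp.\ of $\tilde G$) and choose the representing families $(y_{k,i})$ (resp.\ $(e_{k,i})$) positive and pairwise disjoint --- possible since positivity and disjointness lift from ultraproducts --- so that the coordinate maps are lattice homomorphisms. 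With that adjustment your argument covers both cases.
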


\begin{proof}
i) Let $X$ be an ultraroot of a member $X_\U$ of $\mathcal{SC}$. If $F\subset X$ is a finite subset of the unit ball, and $\eps>0$, the canonical image $F_\U$ of $F$ in $X_\U$ is contained in a $(1+\eps)$-linear (or vector lattice) isomorphic copy $E$ of a member of $\C_f$ in $X_\U$. Since $E$ is finite-dimensional, it has the form $\prod_\U E_i$, where the $E_i$ are linear subspaces (or vector sublattice) of $X$, and $\dim E_i=\dim E$. Similarly if $T: G\to E$ is an $(1+\eps)$-isomorphism, then $T=\prod_\U T_i$ where $T_i:G\to E_i$ are $(1+\eps_i)$-isomorphisms, and $\lim_{i,\U} \eps_i=\eps$. Finally, again since $E$ is finite dimensional, it is easy to see that for each $x\in F$, $d(x,E)=\lim_{i,\U} d(x,E_i)$. Consequently there is $U\in\U$ such that for every $i\in U$, $F$ lies in a $2\eps$-neighborhood of $E_i$, and $E_i$ a $(1+2\eps)$-copy of $G$.

ii) a) Assume first the existence of the bounds $m(n,\eps)$ as in the statement. We prove first that each member $X$ of $\mathcal{SC}$  verifies $P_{n,\eps,m'}$. Indeed if $\delta=\eps/9$ and $F=\{x_1,\dots x_n\}$ is a finite set in the unit ball of $X$ there is an $(1+\delta)$-isomorphic embedding $T$ from a member $G$ of $\C_f$ into $X$, with $TG$ containing a set $F'=\{x'_1,\dots x'_n\}$ such that  $\|x_i-x'_i\|\le \delta$, $j=1\dots,n$. Note that elements of $F'$ have norm bounded by $1+\delta$, so that elements of $T^{-1}F'$ have norm bounded by $(1+\delta)^2$. By property $P_{n\delta}$ for $G$, there is a further member $G_1$ of $\C_f$, of dimension $\le m(n,\delta)$ and an $(1+\delta)$-isomorphic embedding $T_1:G_1\to G$ such that $d((1+\delta)^{-2}T^{-1}x'_i, T_1G_1)\le \delta$, $i=1,\dots,n$. Then $TT_1G_1$ is an $(1+\delta)^2$-isomorphic copy of $G_1$ in $X$ (note that $(1+\delta)^2\le 1+3\delta$), and $d(x_i,TT_1G_1)\le\delta(1+(1+\delta)^3)\le 9\delta$, $i=1,\dots,n$.

Let now $X=\prod_\U X_i$ be an ultraproduct of a family $(X_i)$ of members of $\mathcal{SC}$ and $F$ be a subset ot the unit ball of $X$ of finite cardinality $n$. We may identify $F$ with $\prod_\U F_i$, where each $F_i$ is a subset ot the unit ball of $X_i$ and has cardinality $n$. For each $i$ choose a member $G_i$ of $\C_f$ of dimension $\le m(n,\eps/9)$ and an $(1+\eps)$-isomorphic embedding of $G_i$ in $X_i$, such that $F_i$ lies in an $\eps$-neighborhood of $T_iG_i$. Then $T=\prod_\U T_i$ is an $(1+\eps)$-isomorphic embedding of $G=\prod_\U G_i$ in $X$ and $F$ lies in an $\eps$-neighborhood of $TG$. The space $G$ is a Banach space (resp. lattice), of finite dimension $d=\lim_{i,\U}\dim G_i\le m(n,\eps/9)$, but is perhaps not a member of $\C_f$. However by \cite[prop 6.1]{H} there is $U\in \U$ such that for every $i\in\U$, $G_i$ is $(1+\eps)$ isomorphic to $G$ as normed linear space. The proof of \cite[prop 6.1]{H} can be mimiked in the lattice case, starting with a  basis of $G$ consisting of positive disjoint vectors, giving then that $G_i$ is $(1+\eps)$-isomorphic to $G$ as normed vector lattice. Finally $F$ is $(1+\eps)^2$-isomorphic to some $G_i$, a member of $\C_f$.

b) Assume now that for some $n,\eps$ there is no bound $m(n,\eps)$ as in the statement, and let us prove that $\mathcal{SC}$ is not closed under ultraproducts. Indeed for every $m\ge 1$ there would exist a member $X_m$ of $\C_f$,  a subset $F_m$ of the unit ball of $X_m$ having cardinality $\leq n$, such that  any  linear subspace (resp. vector sublattice) of $X_m$  which is $(1+\eps)$-isomorphic to a member of $\C_f$, and intersect all the balls $B(x,\eps)$, $x\in F_m$, must have dimension $> m$.

Let $\U$ be a free ultrafilter on $\mathbb N$ and consider the space $X=\prod_\U X_m$ and the set $F= \prod_\U F_m$. The set $F$ has $n$ elements $\xi_k=[x_{k,m}]_\U$, where for each $m$, $F_m=\{x_{1,m}, x_{2,m},\dots x_{n,m}\}$ (note that $\xi_k$ is well defined since each sequence $(x_{k,m})_m$ is bounded). Consider, if it exists, a subspace (or sublattice) $E$ of $X$, of finite dimension $d$, which is $(1+\eps/2)$-isomorphic to a member $G$ of $C_f$ and intersects the balls $(B(\xi_k,\eps/2)$. Then $E=\prod_\U E_m$, where $E_m$ is a $\le d$-dimensional subspace (resp. vector sublattice) of $X_m$. For  some $U\in\U$, and every $m\in U$, $E_m$ is $({1+\eps\over 1+\eps/2})$-isomorphic to $E$, and thus  $(1+\eps)$-isomorphic to $G$, and  moreover intersects $B(x_{k,m},\eps)$ for every $k=1,\dots n$. Thus $\dim E_m> m$,  for all $m\in U$, a contradiction. Hence such an $E$ does not exist, and $X$ is not in $\mathcal{SC}$.
\end{proof}

\begin{corollary}\label{axioms}
Let $\C$ be a class of Banach spaces or lattices.
The following assertions are equivalent:
\parindent= 10pt

i) $\C$ is axiomatizable and $\C=\mathcal{SC}$

ii) There exists a map $m: \nat\to \nat$ such that $\C$ consists of those Banach spaces (resp. lattices) verifying the list of ``axioms'':

\vskip 3pt
$(A_n)$: \hfill\parbox[t]{13.5cm}{for every subset $F$ of cardinality $\le n$ of the unit ball of $X$ there is  a member $G$ of $\C_f$ of dimension  $\le m(n)$ and a $(1+\frac 1n)$-isomorphic copy of $G$ in $X$ which intersects all the balls $B(x,1/n)$, $x\in F$.}
\end{corollary}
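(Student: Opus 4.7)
The plan is to derive both directions from the preceding Proposition by translating between its dimension bounds $m(n,\eps)$ and the map $m:\nat\to\nat$ of the statement.

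For $(i)\Rightarrow(ii)$: axiomatizability of $\C$ together with $\C=\mathcal{SC}$ implies that $\mathcal{SC}$ is closed under ultraproducts, so by part (ii) of the preceding Proposition there is a bound $m(n,\eps)$ for each $n$ and $\eps\in(0,1]$, and moreover every member of $\mathcal{SC}$ verifies $P_{n,\eps,m(n,\eps/9)}$. I would set $m(n):=m(n,1/(9n))$, so that substituting $\eps=1/n$ gives the property $P_{n,1/n,m(n)}$, which is exactly $(A_n)$; thus every $X\in\C$ satisfies $(A_n)$ for all $n$. Conversely, any $X$ verifying each $(A_n)$ is $\C_f$-pavable: given a finite $F\subset X$ and $\eps>0$, rescale $F$ into the unit ball and apply $(A_n)$ with $n$ chosen large enough relative to $|F|$, $\max_{x\in F}\|x\|$, and $1/\eps$. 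Hence $X\in\mathcal{SC}=\C$.

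For $(ii)\Rightarrow(i)$: assume $\C$ is the class of spaces verifying every $(A_n)$ for some fixed $m$. I would first prove $\C=\mathcal{SC}$. The inclusion $\C\subset\mathcal{SC}$ is again by the rescaling argument just above. For $\mathcal{SC}\subset\C$, take $X\in\mathcal{SC}$ and a finite $F\subset B_X$ of size $n$. By $\C_f$-pavability, for any $\delta>0$ there exist $G'\in\C_f$ and a $(1+\delta)$-isomorphism $T:G'\to X$ with $F$ lying $\delta$-close to $TG'$. Since $G'\in\C_f\subset\C$, the member $G'$ itself verifies $(A_n)$, providing $G\in\C_f$ with $\dim G\le m(n)$ and a $(1+1/n)$-isomorphism $S:G\to G'$ placing a suitable rescaling of the pullback of $F$ within $1/n$ of $SG$. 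The composition $TS$ then has isomorphism constant at most $(1+1/n)(1+\delta)$ and places $F$ within $1/n+O(\delta)$ of $TSG$. Letting $\delta\to 0$ and extracting a Banach--Mazur limit on the piece of dimension $\le m(n)$ yields an exact witness for $(A_n)$ in $X$.

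Having established $\C=\mathcal{SC}$, axiomatizability of $\C$ reduces to closure of $\mathcal{SC}$ under ultraproducts and ultraroots. Closure under ultraroots is part (i) of the Proposition. For ultraproducts, I would verify the hypothesis of part (ii) directly from $m$: given $n\in\nat$ and $\eps\in(0,1]$, pick $n'\ge\max\{n,\lceil 1/\eps\rceil\}$ and set $\bar m(n,\eps):=m(n')$. Any $X\in\C_f$ satisfies $(A_{n'})$, so padding any $n$-subset of $B_X$ into an $n'$-subset and applying $(A_{n'})$ produces a $(1+1/n')\le(1+\eps)$-isomorphic copy of a $\C_f$-member of dimension $\le m(n')$ lying at distance $\le 1/n'\le\eps$; this is property $P_{n,\eps,\bar m(n,\eps)}$. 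By part (ii) of the Proposition, $\mathcal{SC}=\C$ is then closed under ultraproducts, and $\C$ is axiomatizable. The main technical obstacle is matching the strict constants $1+1/n$ and $1/n$ in $(A_n)$ against the approximations produced by paving and by the Heinrich representation \cite[Prop.~6.1]{H} of a finite-dimensional ultraproduct as a $(1+\eta)$-limit of its factors; both introduce perturbations of the form $(1+1/n+\eta)$ and $1/n+\eta$. The standard fix is Banach--Mazur compactness on finite-dimensional models of dimension $\le m(n)$, which yields a limiting witness still lying in $\C_f$.
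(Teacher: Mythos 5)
The paper offers no explicit proof of this corollary---it is presented as an immediate consequence of the preceding Proposition---and your overall route (translating between the bounds $m(n,\eps)$ of the Proposition and the single map $m$ of the statement) is exactly the intended derivation. Your direction (i)$\Rightarrow$(ii) is correct: setting $m(n)=m(n,1/(9n))$ and invoking the ``moreover'' clause of the Proposition gives $(A_n)$ for every member of $\C=\mathcal{SC}$, and the rescaling argument gives the converse inclusion. Your verification that the $(A_n)$'s imply the hypothesis of part (ii) of the Proposition (via $\bar m(n,\eps)=m(n')$ with $n'\ge\max\{n,\lceil 1/\eps\rceil\}$) is also fine, so closure of $\mathcal{SC}$ under ultraproducts and ultraroots is secured.

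The genuine gap is in the step $\mathcal{SC}\subset\C$ of direction (ii)$\Rightarrow$(i), and specifically in the final ``Banach--Mazur compactness'' fix. Two things go wrong. First, a Banach--Mazur limit $G$ of the witnesses $G_\delta\in\C_f$ is an ultraproduct of them, and you have no reason at this stage to know that $G$ again lies in $\C_f$ (that $\C$ is closed under ultraproducts is part of what is being proved, so the argument is circular there). Second, and more fundamentally, even if $G\in\C_f$ were granted, the existence for every $\delta>0$ of a $(1+\tfrac1n)(1+\delta)$-embedding of $G_\delta$ into the \emph{fixed} space $X$ meeting the balls $B(x,\tfrac1n+C\delta)$ does not yield in the limit an exact $(1+\tfrac1n)$-embedding meeting the balls $B(x,\tfrac1n)$: almost-isometric containment in a fixed Banach space does not pass to exact containment (weak limits of the approximating embeddings can lose the lower norm estimate), so no ``limiting witness'' inside $X$ exists in general. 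Note that the device the paper uses to absorb such losses in the proof of the Proposition---starting from $\delta=\eps/9$---is unavailable here, because $(A_n)$ applied inside the paving piece $G'$ hands you the constants $1+\tfrac1n$ and $\tfrac1n$ with no slack (and using $(A_{n'})$ for $n'>n$ instead would ruin the dimension bound $m(n)$). This difficulty is intrinsic to the \emph{exact} reading of $(A_n)$; the paper's introduction explicitly calls these ``informal'' axioms to be read with approximate satisfaction in Henson's or continuous logic, and under that reading your composition argument, without the compactness step, already suffices. You should either adopt that approximate reading explicitly or acknowledge that the exact-constant version of (ii)$\Rightarrow$(i) is not established by this argument.
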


The content of the following theorem is that the situation described in Corollary \ref{axioms} ``passes from Banach lattices to Banach spaces" for  classes of order continuous Banach lattices with DPIU.

\begin{theorem}
Let $\C$ be a class of order continuous Banach lattices with property DPIU, which is closed under ultraproducts. If $\C$ coincides with the class of script-$\C$-Banach lattices, then $\C^B$ coincides with the class of script-$\C^B$-Banach spaces. 
\end{theorem}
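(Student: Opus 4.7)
The inclusion $\C^B \subseteq \mathcal{SC}^B$ is immediate: any $Y \in \C^B$ is linearly isometric to some $L \in \C = \mathcal{SC}$, and a $\C_f$-Banach-lattice paving of $L$ yields, by forgetting the order structure, a $\C_f^B$-Banach-space paving of $Y$.

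For the reverse inclusion, let $X \in \mathcal{SC}^B$ and fix a $\C_f^B$-paving $(X_i)_{i\in I}$ of $X$ together with $(1+\eps_i)$-linear isomorphisms $T_i : G_i \to X_i$, $G_i \in \C_f$. By Remark \ref{rem: paving}, there is a countably incomplete adapted ultrafilter $\U$ along which $\eps_i \to 0$, and the resulting linear isometry $\prod_\U X_i \cong \prod_\U G_i$ produces a member $L := \prod_\U G_i \in \C$ by the ultraproduct closure of $\C$. Since members of $\C$ are super-KB-spaces by Fact \ref{fact:ultra-oc}, in particular $L$ is a KB-space; Theorem \ref{L-inverse ultraprod} then realizes $X$ as a closed, contractively complemented sublattice $E$ of $L$. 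It suffices to prove $E \in \C$, for then $X \cong E \in \C^B$.

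To this end, note that $\C$ is axiomatizable as a Banach-lattice class: ultraproduct closure is assumed, while ultraroot closure follows from $\C = \mathcal{SC}$ together with the general fact (Proposition 5.1(i) earlier in this section) that $\mathcal{SC}$ is closed under ultraroots. Theorem \ref{axiomatizability1} then gives axiomatizability of $\C^B$. Invoking Proposition \ref{L-ultraroot-2}: if one can find an ultrafilter $\V$ for which $E_\V$ is \emph{linearly} isometric to $L$, the DPIU property of $L$ automatically upgrades this to a lattice isometry $E_\V \cong L \in \C$, whence $E \in \C$ follows from ultraroot closure of $\C$.

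The main technical hurdle is therefore the construction of such an ultrafilter $\V$, equivalently (since $E \cong X$) of $\V$ with $X_\V$ linearly isometric to $L$. The plan is to take $\V$ countably incomplete and of sufficient saturation, after enlarging the index set of the paving, so that for every bounded family $(y_i)_{i \in I}$ in $X^I$ one has $d(y_i, X_i) \to 0$ along $\V$; under this condition the canonical isometric embedding $\tilde\gamma : \prod_\V X_i \to X_\V$ becomes surjective, yielding $X_\V \cong \prod_\V X_i \cong L$. This saturation step is the principal obstacle: it requires absorbing arbitrary bounded sequences into the paving in a uniform way, which is a nontrivial property of the chosen paving and ultrafilter. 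Once it is in place, Proposition \ref{L-ultraroot-2} and axiomatizability of $\C$ together conclude $E \in \C$, hence $X \in \C^B$.
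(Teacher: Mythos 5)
Your reduction to showing that the sublattice $E\cong X$ of $L$ belongs to $\C$ is correct and matches the paper up to that point, and your observation that $\C$ is axiomatizable as a Banach lattice class (ultraroot closure coming from $\C=\mathcal{SC}$) is also right. But the route you propose for finishing --- finding an ultrafilter $\V$ such that the canonical embedding $\tilde\gamma:\prod_\V X_i\to X_\V$ is surjective, so that $X_\V\cong\prod_\V X_i$ --- cannot be carried out; the step you flag as ``the principal obstacle'' is in fact impossible. The subspaces $X_i$ in a script-paving are finite dimensional, so if $X$ is infinite dimensional then by Riesz's lemma one can choose, for every index $i$, a norm-one $y_i\in X$ with $d(y_i,X_i)\ge 1/2$; then for \emph{any} ultrafilter $\V$ the element $[y_i]_\V$ of $X_\V$ lies at distance $\ge 1/2$ from $\tilde\gamma\bigl(\prod_\V X_i\bigr)$. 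No enlargement of the index set and no saturation property of the ultrafilter can defeat this, since the argument applies to an arbitrary family of finite-dimensional subspaces. (A non-canonical linear isometry between $X_\V$ and a member of $\C$ might still exist in particular cases, but you give no construction, and nothing in the DPIU machinery produces one.) So the proof is incomplete, and the missing step is not a technicality.

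The paper avoids this entirely: it never tries to realize a member of $\C$ as an ultrapower of $X$. Instead, after placing $X$ as a closed sublattice of $L$ exactly as you do, it uses the linear isometry $\phi=\tilde\gamma\circ J:L\to X_\U$, which is disjointness preserving by DPIU and whose modulus $|\phi|$ agrees with $\phi$ on $j(X)$ (where both coincide with the diagonal embedding $D_X$), to transport a finite-dimensional \emph{lattice} paving of $L$ by members of $\C_f$ into finite-dimensional sublattices of $X_\U$ that are close to $D_X(F)$. Since such a sublattice is spanned by finitely many disjoint positive atoms, these atoms lift to disjoint positive bounded families in $X$, yielding finite-dimensional sublattices of $X$ itself that are almost lattice-isometric to members of $\C_f$ and $\eps$-close to $F$. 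Hence $X$ is $\C_f$-pavable as a Banach lattice, and the hypothesis $\C=\mathcal{SC}$ gives $X\in\C$ directly, with no ultraroot argument needed. You would need to replace your final step by an argument of this kind.
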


\begin{proof}
It is clear that a paving of a Banach lattice $X$ by a family $(X_i)$ of members of $\C_f$ induces a paving of the underlying Banach structure $X^B$ by the family $(X^B_i)$ of members of $\C^B_f$, hence $\C^B$ is included in $\mathcal S(\C^B)$.

Conversely let $X$ be a $(\C^B)_f$-pavable Banach space, and consider a paving of $X$ by a family $(X_i)$ of finite dimensional linear subspaces and an adapted ultrafilter $\U$ so that $\prod_\U X_i$ is linearly isometric to an ultraproduct $L$ of members of $\C$, which is also a member of $\C$ by hypothesis. By Theorem \ref{L-inverse ultraprod} and its proof we may assume that $X$ is a closed vector sublattice of $L$, and that there exists a linear isometry $J$ from $L$ onto $\prod_\U X_i$ such that $J\circ j=\Delta$, where $j$ is the inclusion map from $X$ into $L$ and $\Delta$ is the canonical linear isometric embedding of $X$ into $\prod_\U X_i$. Let $\tilde \gamma=\prod_\U \gamma_i$ be the ultraproduct of the inclusions  $\gamma_i: X_i\to X$, then $\tilde \gamma$ is a linear isometric embedding from $\prod_\U X_i$ into $X_\U$, such that $\tilde \gamma\circ \Delta=D_X$ (the canonical embedding of $X$ into $X_\U$). Let also $\tilde\jmath: X_\U\to L_\U$ be the  ultrapower of the inclusion map $j: X\to L$. We set $\phi=\tilde \gamma\circ J$ which is a linear isometry. Finally we have the commutative diagram
\[\xymatrix{&&\!\!\prod_\U X_i\ar^{\tilde\gamma}[dd]&&\\X\ar^\Delta[urr]\ar_{\phantom{jjj}j}[r]\ar_{D_X}[drr]&L\ar_J[ur]\ar^\phi[dr]&&\\& &X_\U \ar^{\tilde\jmath}[r]&L_\U }\]

Since $L$ has property DPIU the  linear isometry  $\tilde j\phi:L\to L_\U$ is disjointness preserving, and since $\tilde j$ is a vector lattice isometry, the map $\phi$ itself must be also disjointness preserving. Since $L_\U$ does not contain $c_0$ (like any member of $\C$, due to Fact \ref{fact:ultra-oc}), then  $X_\U$ does not contain $c_0$ as well, and thus by the structure theorem for disjointness preserving isometries we have $\phi=U|\phi|$, where $U$ is a sign change on $X_\U$. For $x\in X_+$ we have $j(x)\ge 0$ and $\phi(j(x))=D_X(x)\ge 0$, hence $|\phi|(j(x))\ge 0$ and $U(|\phi|(j(x)))\ge 0$, which in turn implies that  $U(|\phi|(jx))=|\phi|(jx)$ (since $U$ is a sign change) and finally $\phi(jx)=|\phi|(jx)$. By linearity this remains true for every $x\in X$.

Now we show that the Banach lattice $X$ is $\C_f$-pavable. By the hypothesis it will show that the Banach lattice $X$ is a member of $\C$, and thus the underlying Banach space  $X$ is a member of $\C^B$. Consider a finite subset $F$ in $X$ and $\eps>0$. Since $L$ is a member of $\C$, it is $\C_f$-pavable by the hypothesis, hence there is $G\in \C_f$ and a vector sublattice $Y$ of $L$, and a vector lattice isomorphism $T: G\twoheadrightarrow Y$ with $(1+\eps)^{-1}\|x\|\le \|Tx\|\le (1+\eps)\|x\|$ for every $x\in G$ and $\mathrm{dist}(x,Y)\le \eps$ for every $x\in j(F)$. Let $Z=|\phi|(Y)$ and $S=|\phi|\circ T$, then $Z$ is a sublattice of $X_\U$, $S$ verifies the same estimates than $T$ and  $\mathrm{dist}(x,Z)\le \eps$ for every $x\in |\phi|\circ j(F)=D_X(F)$. Since $G$ is a finite dimensional vector lattice it is generated by a finite system of positive atoms $e_1,\dots e_n$. Then $u_j=Se_j, j=1,\dots,n$ is a finite set of positive atoms generating $Z$. We may find a system of bounded families $((u_{1,i}),\dots, (u_{n,i}))$ in $X$, with $[u_{j,i}]_\U=u_j$, $j=1,\dots,n$ and $\|u_{j,i}\|=\|u_j\|$, $u_{j,i}\ge 0$ and $u_{j,i}\wedge u_{k,i}=0$ for all $j\ne k$ and all $i\in I$. Clearly $Z_i=\mathrm{span}\,[u_{1,i},\dots,u_{n,i}]$ is a sublattice of $X$, and $S_i(\sum_{j=1}^n\lambda_j e_j)= \sum_{j=1}^n\lambda_j u_{j,i}$ defines a 
vector lattice isomorphism from $G$ onto $Z_i$. It is not hard to see that $\|S_i\|\to \|S\|$, $\|S_i^{-1}\|\to \|S^{-1}\|$ with respect to $\U$. For each $x\in F$ we can find an element $z_x\in Z$ with $\|D_X x-z_x\|<\eps$. We have $z_x=Sg_x$ for a certain $g_x\in G$. Then $\lim_{i,\U} \|x-S_ig_x\|=\|D_X x-z_x\|<\eps$. Finally the set
\[\{i\in I: \|S_i\|<1+2\eps,  \|S_i^{-1}\|<1+2\eps, \mathrm{dist}\,(x,Z_i)<\eps \hbox{ for every } x\in F\}\]
belongs to $\U$, and is thus not empty.
\end{proof}

\begin{remark}
A simple case of an axiomatizable class $\C$ of Banach lattices with $\C=\mathcal{SC}$ is when $\C$ is closed under ultraproducts and sublattices. Examples are $L_p$-spaces and  classes of convex Musielak-Orlicz spaces satisfying a given $\Delta_2$-condition. Other examples of axiomatizable classes $\C$ with $\C=\mathcal{SC}$ but not closed under sublattices are Nakano spaces with exponents in a given finite interval \cite{PR} and classes $BL_pL_q$ \cite{HR}. These examples have $DPIU$ property whenever they are exactly $r$-convex for some $r>2$ (see section \ref{r-convex-BL}) but in the case of $BL_pL_q$ classes  other cases may be found (section \ref{sec:other}).
\end{remark}

\section{The case of  $r$-convex  Banach lattices, $r>2$}\label{r-convex-BL}

\subsection{Norm determination of  disjointness in $r$-convex  Banach lattices, $r>2$}\hfill\break

Some useful information about $r$-convexity and concavity in abstract Banach lattices and related matters used in the present section may be found in \cite[sec. 1.d]{LT}.

\begin{definition} We say that a Banach lattice $X$ is {\it exactly $r$-convex} if it is $r$-convex with $r$-convexity constant equal to one, i. e.
\[\forall  x,y\in X\quad  \|(|x|^r+|y|^r)^{1/r}\|\le (\|x\|^r+\|y\|^r)^{1/r}\]
Similarly, $X$ is exactly $s$-concave if
\[\forall  x,y\in X\quad  \|(|x|^s+|y|^s)^{1/s}\|\ge (\|x\|^s+\|y\|^s)^{1/s}\]
\end{definition}

\begin{proposition}\label{disjointcharacter} 
Let $r>2$ and $X$ be a strictly monotone and exactly $r$-convex Banach lattice.
Then two elements $x,y\in X$ are disjoint if and only if they satisfy the condition 
\[
[\mathrm D_r]:\ \ \  \forall t\in [0,1]\ \ \|x+ty\|^r+\|x-ty\|^r\le 2 (\|x\|^r+t^r\|y\|^r)
\]
\end{proposition}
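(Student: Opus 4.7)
For the forward direction, I would assume $|x|\wedge|y|=0$ and pass to a Kakutani representation of the principal closed ideal $\mathcal I(|x|+|y|)\subset X$, which is lattice isomorphic to some (real or complex) $C(K)$; let $\tilde x,\tilde y\in C(K)$ denote the images of $x,y$. Disjointness means $|\tilde x|\cdot|\tilde y|\equiv 0$ on $K$, hence pointwise
\[
|x\pm ty| \;=\; |x|+t|y| \;=\; (|x|^r+t^r|y|^r)^{1/r},
\]
the last equality because at every $\omega\in K$ one of $|\tilde x(\omega)|,|\tilde y(\omega)|$ vanishes. Applying exact $r$-convexity to the pair $(x,ty)$ then gives $\|x\pm ty\|^r\le \|x\|^r+t^r\|y\|^r$, and summing the two estimates yields $[\mathrm{D}_r]$.

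For the reverse direction I would argue by contradiction, assuming $[\mathrm{D}_r]$ holds while $|x|\wedge|y|\ne 0$. The key scalar fact is the inequality
\[
|a+tb|^r+|a-tb|^r \;\ge\; 2(|a|^r+t^r|b|^r),
\]
valid for $r\ge 2$ and strict for $r>2$ whenever $ab\ne 0$ and $t>0$; moreover a Taylor expansion at $t=0^+$ shows that the excess is asymptotic to $r(r-1)|a|^{r-2}|b|^2 t^2$. Since $|x|\wedge|y|\ne 0$, the Kakutani representation provides a non-empty open set $V\subset K$ on which both $|\tilde x|$ and $|\tilde y|$ are bounded below by some $\delta>0$, so for all sufficiently small $t>0$ one has in the lattice
\[
|x+ty|^r+|x-ty|^r \;\ge\; 2(|x|^r+t^r|y|^r)+c\, t^2\cdot \mathbf{1}_V
\]
with some $c=c(\delta,r)>0$. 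Applying exact $r$-convexity to the pair $(x+ty,x-ty)$ gives
\[
\|x+ty\|^r+\|x-ty\|^r \;\ge\; \bigl\|(|x+ty|^r+|x-ty|^r)^{1/r}\bigr\|^r,
\]
and combining this with the pointwise lower bound and strict monotonicity of $\|\cdot\|_X$ produces an estimate $\|x+ty\|^r+\|x-ty\|^r \ge 2\|x\|^r+C t^2$ for some $C=C(x,y)>0$ and all small $t>0$. Since $r>2$, the bound $[\mathrm{D}_r]$ only allows an excess $2t^r\|y\|^r=o(t^2)$ beyond $2\|x\|^r$, which yields the contradiction.

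The hard part is the bridging step: translating a strict pointwise $t^2$-excess on $V$ into a strict norm-level $Ct^2$-excess. This uses the strict monotonicity of the $X$-norm in a quantitatively controlled form, made accessible because the excess sits on a fixed open set $V$ while the dominant term $2^{1/r}|x|$ is unaffected by the perturbation. Once this estimate is secured, the comparison of orders $t^2$ against $t^r$ (with $r>2$) closes the argument immediately.
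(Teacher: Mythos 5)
Your forward direction is correct and is essentially the paper's: for disjoint $x,y$ one has $|x\pm ty|=(|x|^r+t^r|y|^r)^{1/r}$, and exact $r$-convexity applied to the pair $(x,ty)$ gives $\|x\pm ty\|^r\le \|x\|^r+t^r\|y\|^r$, whence $[\mathrm D_r]$.

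The reverse direction, however, has a genuine gap, and it sits exactly where you flag ``the hard part''. You need to pass from the pointwise excess $|x+ty|^r+|x-ty|^r\ge 2(|x|^r+t^r|y|^r)+c\,t^2\mathbf{1}_V$ to a norm-level excess $\|x+ty\|^r+\|x-ty\|^r\ge 2\|x\|^r+C t^2$ with $C>0$ independent of $t$. Strict monotonicity is a purely qualitative property ($0\le u\le v$ and $u\ne v$ imply $\|u\|<\|v\|$); it does not come in a ``quantitatively controlled form'', and the fact that the excess lives on a fixed set $V$ does not by itself produce a lower bound proportional to $t^2$: the increment $\big\|\big(2|x|^r+c\,t^2\mathbf{1}_V\big)^{1/r}\big\|^r-2\|x\|^r$ is positive for each $t$ but could a priori be $o(t^2)$. (A minor additional point: for $V$ open, $\mathbf{1}_V$ need not belong to $C(K)$.) The way to close the gap --- and this is precisely where the paper's proof does its work --- is to pass to the $2$-concavification $X^{(2)}$ (legitimate because exact $r$-convexity with $r\ge 2$ implies exact $2$-convexity) and choose a positive norm-one functional $\varphi\in X^{(2)*}$ norming $|x|^2$. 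This yields the clean estimate $\|x+ty\|^r+\|x-ty\|^r\ge 2\|x\|^r+r\,t^2\|x\|^{r-2}\langle\varphi,|x|^2\wedge|y|^2\rangle$, so $[\mathrm D_r]$ together with $r>2$ forces $\langle\varphi,|x|^2\wedge|y|^2\rangle=0$. Only then does strict monotonicity enter, and only qualitatively: $\varphi$ then also norms $|x|^2-|x|^2\wedge|y|^2\le|x|^2$, these two elements of $X^{(2)}$ have equal norms, hence are equal, i.e.\ $|x|\wedge|y|=0$. Equivalently, in your contradiction scheme the constant $C$ must be produced as $\langle\varphi,|x|^2\wedge|y|^2\rangle$, whose strict positivity when $|x|\wedge|y|\ne 0$ is exactly the content of strict monotonicity read through a norming functional; as written, this key quantitative step is asserted but not established.
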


\begin{proof} Clearly condition $[\mathrm D_r]$ is necessary since if $x\perp y$ then for every positive real number 
$t$ we have
$$\|x\pm ty\|^r=\|(|x|^r+t^r|y|^r)^{1/r}\|^r\le \|x\|^r+t^r\|y\|^r$$
Let us prove now that condition $[\mathrm D_r]$ is sufficient.

Recall that $X$ being exactly $r$-convex is also exactly $2$-convex (since $r\ge 2$) \cite[Prop.~1.d.5]{LT}. We have then for every $x,y\in X$:
\begin{align*}
\left({\|x+y\|^r+\|x-y\|^r\over 2}\right)^{1/r}&\ge \left({\|x+y\|^2+\|x-y\|^2\over 2}\right)^{1/2}  
\ge \left\|\left({|x+y|^2+|x-y|^2\over 2}\right)^{1/2}\right\|  \\ &= \left\|(|x|^2+|y|^2)^{1/2}\right\|
\end{align*}
Then for every $x,y\in X$ and $t$ a positive real number:
\[
\|x+ty\|^r+\|x-ty\|^r\ge 2\left\|(|x|^2+t^2|y|^2)^{1/2}\right\|^r \ge 2\left\|(|x|^2+t^2(|x|\wedge |y|)^2)^{1/2} 
\right\|^r
\]
Consider the 2-concavification $X^{(2)}$ of the Banach lattice $X$ and denote by $x\mapsto x^2$ the natural (non-linear) bijective map from $X$ onto $X^{(2)}$. We have
$$\left\|(|x|^2+t^2(|x|\wedge |y|)^2)^{1/2}\right\|_X=\left\||x|^2+t^2(|x|^2\wedge |y|)^2)\right\|^{1/2}_{X^{(2)}}$$
Recall that $X$ being 2-convex,  its 2-concavification $X^{(2)}$ is a Banach lattice too. Let $\ph\in  X^{(2)*}$ be a norm one positive functional norming the element $|x|^2$. We have
\begin{align*}
\left\||x|^2+t^2(|x|^2\wedge |y|)^2)\right\|_{X^{(2)}} &\ge \langle\ph, |x|^2+t^2(|x|^2\wedge |y|^2)\rangle = \langle \ph, |x|^2\rangle + t^2\langle \ph, |x|^2\wedge |y|^2\rangle \\
&= \|x\|_X^2+ t^2\langle \ph, |x|^2\wedge |y|^2\rangle
\end{align*}
it follows that
\[
\left\|(|x|^2+t^2(|x|\wedge |y|)^2)^{1/2}\right\|_X^r\ge \left(\|x\|_X^2+ t^2\langle \ph, |x|^2\wedge |y|^2\rangle\right)^{r/2}\ge \|x\|^r+\frac r2 t^2\|x\|^{r-2} \langle \ph, |x|^2\wedge |y|^2\rangle
\]
where we used the elementary inequality $(a+b)^\alpha\ge a^\alpha+\alpha ba^{\alpha-1}$, valid for positive real numbers $a,b$ and exponent $\alpha\ge 1$. Hence
$$\|x+ty\|^r+\|x-ty\|^r\ge 2\|x\|^r+r t^2\|x\|^{r-2} \langle \ph, |x|^2\wedge |y|^2\rangle $$
Assume now that $x,y\in X$ verify condition (D). Then necessarily
$$r t^2\|x\|^{r-2} \langle \ph, |x|^2\wedge |y|^2\rangle\le 2 t^r\|y\|^r$$
for every $0<t\le 1$. Dividing by  $t^2$ and letting $t\to 0$, we deduce since $r>2$ 
$$ \langle \ph, |x|^2\wedge |y|^2\rangle=0$$
Then
$$\|x^2\|_{X^{(2)}}=\langle \ph, |x|^2\rangle= \langle \ph, |x|^2-|x|^2\wedge |y|^2\rangle 
\le \left\||x|^2-|x|^2\wedge |y|^2 \right\|_{X^{(2)}} $$
that is
$$\|x\|^2_X\le  \|(|x|^2-|x|^2\wedge |y|^2)^{1/2} \|_X$$
Since $0\le (|x|^2-|x|^2\wedge |y|^2)^{1/2} \le |x|$ and $X$ has strictly monotone norm the last inequality implies
$|x|=(|x|^2-|x|^2\wedge |y|^2)^{1/2}$, that is $|x|\wedge |y|=0$.
\end{proof}

\begin{corollary}\label{DPisometries} Let $X$,  $Y$ be  exactly $r$-convex Banach lattices ($r>2$), and assume that $Y$ has a  strictly monotone norm. Then every linear isometry $T: X\to Y$ preserves disjointness. 
\end{corollary}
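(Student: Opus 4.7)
The plan is to exploit the fact that condition $[\mathrm D_r]$ from Proposition \ref{disjointcharacter} is expressed purely in terms of the norm, so it automatically transfers across any linear isometry. The corollary then follows by combining the two directions of that equivalence, using one direction in $X$ and the other in $Y$.

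More concretely, suppose $x, y \in X$ are disjoint. Inspecting the proof of Proposition \ref{disjointcharacter}, the implication ``$x \perp y \Longrightarrow [\mathrm D_r]$'' only requires exact $r$-convexity of $X$ (it follows immediately from $\|x \pm ty\|^r = \||x|^r + t^r|y|^r\|_{X^{(1/r)}}$-type estimates using $r$-convexity with constant $1$), and does not need strict monotonicity. Therefore condition $[\mathrm D_r]$ holds for the pair $(x,y)$ in $X$.

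Next, since $T : X \to Y$ is a linear isometry, we have $\|Tx + t\,Ty\| = \|T(x+ty)\| = \|x + ty\|$ for every scalar $t$, and similarly for the other norm-expressions appearing in $[\mathrm D_r]$. Thus the validity of $[\mathrm D_r]$ for $(x,y)$ in $X$ transfers verbatim to validity of $[\mathrm D_r]$ for the image pair $(Tx, Ty)$ in $Y$.

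Finally, we apply the converse direction of Proposition \ref{disjointcharacter} in $Y$: here we use in full the hypotheses that $Y$ is exactly $r$-convex, has strictly monotone norm, and that $r > 2$ (the latter being what allows the division by $t^2$ and passage to the limit $t \to 0$ in the proof of \ref{disjointcharacter}). This yields $Tx \perp Ty$, completing the argument. There is no real obstacle here: the work has already been done in Proposition \ref{disjointcharacter}, and the corollary is essentially the observation that $[\mathrm D_r]$ is an isometric invariant that characterizes disjointness on both sides.
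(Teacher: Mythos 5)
Your proof is correct and is essentially the argument the paper intends: the necessity of $[\mathrm D_r]$ in $X$ uses only exact $r$-convexity, the condition is purely norm-theoretic and hence transported by the isometry, and sufficiency in $Y$ (where strict monotonicity and $r>2$ are used) gives $Tx\perp Ty$. The paper states the corollary without further proof precisely because this is the intended deduction from Proposition \ref{disjointcharacter}.
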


\subsection{Some consequences}

\begin{corollary}\label{axiomatizability2} Let $2<r\le s<\infty$ or $1<r\le s<2$. The class $\mathcal L^B_{r,s}$ of Banach spaces which are linearly isometric to exactly $r$-convex and $s$-concave Banach lattices is axiomatizable.
\end{corollary}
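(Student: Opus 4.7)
The proof splits according to the two ranges of exponents, and I would treat each separately.

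For $2<r\le s<\infty$ the plan is to apply Theorem~\ref{axiomatizability1} directly to the Banach lattice class $\mathcal L_{r,s}$ of exactly $r$-convex and $s$-concave Banach lattices. First I would observe that $\mathcal L_{r,s}$ is axiomatizable in the Banach lattice language, since the two defining inequalities are closed conditions (expressible via Krivine's functional calculus applied to $|x|,|y|$) and pass trivially through lattice ultraproducts and ultraroots. It then remains to check that every $L\in\mathcal L_{r,s}$ is order continuous and satisfies DPIU. Order continuity follows from the fact that exact $s$-concavity with $s<\infty$ prevents the existence of a sublattice copy of $c_0$, making $L$ a KB-space. For DPIU I would first prove strict monotonicity of the norm: if $0\le x\le y$, write $y=x+(y-x)$ and use the pointwise inequality $(x^s+(y-x)^s)^{1/s}\le y$ (valid for $s\ge 1$) combined with exact $s$-concavity to obtain $\|y\|\ge(\|x\|^s+\|y-x\|^s)^{1/s}$, which forces $y=x$ whenever $\|x\|=\|y\|$. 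Since every ultrapower $L_\U$ again belongs to $\mathcal L_{r,s}$ (hence is still exactly $r$-convex and strictly monotone), Corollary~\ref{DPisometries} applies to every linear isometry $L\hookrightarrow L_\U$, yielding DPIU. Theorem~\ref{axiomatizability1} then gives axiomatizability of $\mathcal L_{r,s}^B$.

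For $1<r\le s<2$ my plan is to reduce to the previous case by duality. By \cite[Prop.~1.d.4]{LT} the dual of an exactly $r$-convex and exactly $s$-concave Banach lattice is exactly $s'$-convex and exactly $r'$-concave, with $2<s'\le r'<\infty$; moreover each $L\in\mathcal L_{r,s}$ is reflexive. I would then identify $\mathcal L_{r,s}^B=\{X:X^*\in\mathcal L_{s',r'}^B\}$: one direction is obtained by dualizing an isometry $X\cong L$ with $L\in\mathcal L_{r,s}$, and the other uses that if $X^*$ is isometric to a reflexive $M\in\mathcal L_{s',r'}$ then $X$ is reflexive and $X\cong X^{**}\cong M^*\in\mathcal L_{r,s}$. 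Since the class $\mathcal L_{s',r'}^B$ is already axiomatizable by the first case, the problem reduces to transferring axiomatizability through dualization.

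For this transfer, members of $\mathcal L_{s',r'}$ are uniformly convex and smooth with moduli depending only on $s',r'$, so Heinrich's theorem provides the isometric identifications $(\prod_\U X_i)^*=\prod_\U X_i^*$ and $(X_\U)^*=(X^*)_\U$ inside the relevant super-reflexive universe; combined with the axiomatizability of $\mathcal L_{s',r'}^B$ these yield closure of $\{X:X^*\in\mathcal L_{s',r'}^B\}$ under ultraproducts and ultraroots. I expect the main technical obstacle to lie here: one must apply the Heinrich identifications to a priori arbitrary Banach spaces $X$ whose ultraproducts or ultrapowers only a posteriori fall into $\mathcal L_{r,s}^B$, so some care is needed to check that uniform convexity and smoothness are inherited by subspaces and pass from an ultrapower back to the base space, before the axiomatizability conclusion of Case~1 can be legitimately invoked on the dual side.
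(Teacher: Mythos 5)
Your proposal is correct and follows essentially the same route as the paper: for $2<r\le s<\infty$ one checks that $\mathcal L_{r,s}$ is a Banach-lattice-axiomatizable class of order continuous, strictly monotone, exactly $r$-convex lattices, so Corollary~\ref{DPisometries} gives DPIU and Theorem~\ref{axiomatizability1} applies; for $1<r\le s<2$ one dualizes to $\mathcal L_{s',r'}$ using superreflexivity to commute duality with ultrapowers. You merely supply more detail than the paper at two points it treats as immediate (the derivation of strict monotonicity from exact $s$-concavity, and the verification that the Heinrich duality identifications apply to an a priori arbitrary ultraroot, which is handled by noting that $X$ embeds in its superreflexive ultrapower and is therefore itself superreflexive).
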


\begin{proof} The class $\mathcal L_{r,s}$ of exactly $r$-convex and $s$-concave Banach lattices is axiomatizable since is trivially closed under ultraproducts and sublattices. Being $s$-concave, these spaces do not contain $c_0$ as sublattices, and thus are order continuous (\cite[Thm 1.c.4 and the Remark that follows]{LT}). Since this  $s$-concavity is exact, these spaces are also strictly monotone. When $2<r\le s<\infty$ it results from   Corollary \ref {DPisometries} that every member of $\mathcal L_{r,s}$ has property DPIU.  By Theorem \ref{L-ultraroot} the class $\mathcal L^B_{r,s}$  is axiomatizable. This last conclusion remains true if $1<r\le s<2$ since $\mathcal L_{r,s}$ is the class consisting of conjugate spaces to members of $\mathcal L_{s',r'}$, where $r',s'$ are the conjugate exponents of $r,s$ (thus $2<s'\le r'<\infty$) and these classes consist of superreflexive spaces (for which conjugation commutes with ultrapower functors).
\end{proof}

More generally we have:

\begin{corollary} Let $2<r\le s<\infty$ or $1<r\le s<2$. Let $\mathcal L$  be an axiomatizable class of Banach lattices included in the class of exactly $r$-convex and $s$-concave Banach lattices. Then the class $\mathcal L^B$ of Banach spaces linearly isometric to members of $\mathcal L$ is axiomatizable.
\end{corollary}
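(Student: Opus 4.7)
The plan is to split along the two ranges in the hypothesis, as in Corollary \ref{axiomatizability2}. Closure of $\mathcal L^B$ under surjective linear isometries and ultraproducts is immediate from the corresponding closure of $\mathcal L$; only closure under ultraroots requires work.

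For the case $2<r\le s<\infty$, I would apply Theorem \ref{axiomatizability1} directly to $\mathcal L$. Two hypotheses must be verified: that every member of $\mathcal L$ is order continuous, and that every member has property DPIU. Exact $s$-concavity with $s<\infty$ precludes sublattice copies of $c_0$, so members of $\mathcal L$ are KB-spaces and a fortiori order continuous. For DPIU, fix $L\in\mathcal L$ and an ultrapower $L_\U$; the defining inequalities of exact $r$-convexity and exact $s$-concavity involve only finitely many vectors at a time and hence pass to ultrapowers, so $L_\U$ is again exactly $r$-convex and $s$-concave, in particular strictly monotone. Corollary \ref{DPisometries} then ensures that every linear isometry $L\to L_\U$ preserves disjointness, and Theorem \ref{axiomatizability1} applies to give the axiomatizability of $\mathcal L^B$.

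For the case $1<r\le s<2$, I would dualize to reduce to the first case, mirroring the argument given at the end of Corollary \ref{axiomatizability2}. Members of $\mathcal L$ are superreflexive (being $r$-convex with $r>1$ and $s$-concave with $s<\infty$), so the ultrapower construction commutes with Banach-lattice duality: $(L^*)_\U$ is canonically lattice-isometric to $(L_\U)^*$. Set $\mathcal L^*=\{L^*:L\in\mathcal L\}$; since $L^*$ is exactly $s'$-convex and exactly $r'$-concave whenever $L$ is exactly $r$-convex and exactly $s$-concave, $\mathcal L^*$ is a class of Banach lattices contained in $\mathcal L_{s',r'}$ with $2<s'\le r'<\infty$. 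Axiomatizability of $\mathcal L$ transfers to $\mathcal L^*$ via this duality: ultraproducts of duals are duals of ultraproducts (giving closure under ultraproducts), and an equation $M_\U\cong L^*$ with $L\in\mathcal L$ gives $(M^*)_\U\cong L^{**}\cong L\in\mathcal L$, hence $M^*\in\mathcal L$ by ultraroot closure of $\mathcal L$, so $M\cong M^{**}\in\mathcal L^*$. By the first case applied to $\mathcal L^*$, the class $(\mathcal L^*)^B$ is axiomatizable as a class of Banach spaces. Finally, if $X$ is a Banach space with some ultrapower $X_\U\in\mathcal L^B$, then $X_\U$ is superreflexive, hence so is $X$ (super-reflexivity being a local property preserved by passing from an ultrapower back to the original space), and $(X^*)_\U\cong(X_\U)^*\in(\mathcal L^*)^B$; axiomatizability of $(\mathcal L^*)^B$ yields $X^*\in(\mathcal L^*)^B$, and one more application of duality gives $X\cong X^{**}\in\mathcal L^B$.

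The principal obstacle is justifying the lattice-isometric commutation $(L^*)_\U\cong(L_\U)^*$ in the superreflexive setting. As Banach spaces this is Heinrich's classical result; the additional compatibility with the lattice structures follows because the canonical isometric embedding $(L^*)_\U\hookrightarrow(L_\U)^*$ is positive and the lattice operations on the dual Banach lattice pass through ultrapower limits. Once this point is in place, the remaining manipulations of closure properties are routine.
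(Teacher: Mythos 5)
Your proposal is correct and follows essentially the argument the paper intends: the paper leaves this corollary without a written proof because it is the proof of the preceding Corollary on $\mathcal L_{r,s}$ verbatim, namely order continuity from exact $s$-concavity, DPIU from exact $r$-convexity plus strict monotonicity of the ultrapower via the disjointness criterion $[\mathrm D_r]$, then Theorem \ref{axiomatizability1}, with the range $1<r\le s<2$ handled by duality in the superreflexive setting. Your added care in checking that exact $r$-convexity and $s$-concavity pass to ultrapowers (so that the target of the isometry is strictly monotone) and in spelling out the lattice-isometric identification $(L^*)_\U\cong(L_\U)^*$ only makes explicit what the paper glosses over.
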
 

\begin{corollary} In particular for $2<p,q<\infty$ or $1<p,q<2$ the class $\mathcal BL_pL_q^{B}$ of Banach spaces linearly isometric to bands in spaces $L_p(L_q)$,  and for $2<r\le s<\infty$ or $1<r\le s<2$  the class  $\mathcal N_{r,s}^B$ of Nakano (Banach) spaces associated with a variable exponent $p(\cdot)$ with values in $[r,s]$ are axiomatizable.
\end{corollary}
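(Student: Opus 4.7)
The plan is to obtain both statements as direct instances of the preceding Corollary. That Corollary says: for $2<r\le s<\infty$ or $1<r\le s<2$, every axiomatizable (in the Banach lattice language) subclass of $\mathcal L_{r,s}$ has an axiomatizable ``underlying Banach space'' class. So for each of the two classes $\mathcal C$ under consideration I only need to verify (i) that $\mathcal C$ is axiomatizable as a class of Banach lattices, and (ii) that $\mathcal C\subset \mathcal L_{r_0,s_0}$ for some pair $r_0\le s_0$ lying entirely in one of the two allowed intervals.

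For $\mathcal N_{r,s}$: Banach lattice axiomatizability is proved in \cite{PR}. The required convexity/concavity is classical: a Nakano space whose exponent function takes values essentially in $[r,s]$ is exactly $r$-convex and exactly $s$-concave, as one reads off immediately from the Luxemburg norm associated with the modular $\int |f|^{p(\cdot)}$. Hence $\mathcal N_{r,s}\subset \mathcal L_{r,s}$, and since the hypothesis on $r,s$ in the present Corollary is exactly the one required, the preceding Corollary yields axiomatizability of $\mathcal N_{r,s}^B$.

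For $\mathcal{BL}_pL_q$: Banach lattice axiomatizability is the main result of \cite{HR}. The key estimate is that the Bochner space $L_p(L_q)$ is exactly $\min(p,q)$-convex and exactly $\max(p,q)$-concave; this follows from Minkowski's integral inequality applied iteratively (equivalently, from Fubini combined with the exact $t$-convexity/concavity of $L_t$, which holds in both regimes because $\min(p,q)$ and $\max(p,q)$ lie on the same side of $2$). Exact $r$-convexity and exact $s$-concavity pass trivially to any closed vector sublattice, and in particular to any band of $L_p(L_q)$. Setting $r=\min(p,q)$ and $s=\max(p,q)$, the hypothesis that $p,q\in(2,\infty)$ (resp. $p,q\in(1,2)$) translates directly into $2<r\le s<\infty$ (resp. $1<r\le s<2$), so $\mathcal{BL}_pL_q\subset \mathcal L_{r,s}$ and the preceding Corollary applies, giving axiomatizability of $\mathcal{BL}_pL_q^B$.

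The only step which is not a bare invocation of a prior result is the exact $\min(p,q)$-convexity and $\max(p,q)$-concavity of $L_p(L_q)$; this is the point where some care is needed, but it is an elementary consequence of Minkowski's integral inequality. Everything else in the proof is a verification that the hypotheses of the preceding Corollary are met.
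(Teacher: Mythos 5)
Your proof is correct, and for half of the statement it takes a genuinely different route from the paper's. In the ranges $2<p,q<\infty$ and $2<r\le s<\infty$ the two arguments coincide: quote Banach-lattice axiomatizability from \cite{HR} and \cite{P,PR}, check exact $r$-convexity and $s$-concavity, and apply the preceding corollary; your computation that $L_p(L_q)$ is exactly $\min(p,q)$-convex and $\max(p,q)$-concave (and that bands inherit this) is the one nontrivial verification and it is correct. In the ranges $1<p,q<2$ and $1<r\le s<2$ the paper instead argues by duality on the concrete classes: reflexivity makes ultrapowers commute with duals, so axiomatizability of $\mathcal BL_pL_q^B$ reduces to that of $\mathcal BL_{p'}L_{q'}^B$, and for Nakano spaces one must additionally handle the fact that the dual of the Luxemburg norm is an Orlicz norm, only $2$-equivalent to a Luxemburg norm, so the sub-$2$ Luxemburg case is deduced from the above-$2$ Orlicz case. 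You avoid all of this by verifying the inclusion in $\mathcal L_{r,s}$ directly in the sub-$2$ regime and invoking the preceding corollary in its stated $1<r\le s<2$ form. This is legitimate and cleaner --- it never requires identifying the dual of a Nakano space --- but be aware that the paper does not actually prove that corollary in the sub-$2$ regime (its justification is itself an abstract duality argument), so your route quietly delegates the duality step to that stated-but-unproved case rather than eliminating it.
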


\begin{proof} Recall that the class $\mathcal BL_pL_q$ of  Banach lattices isometrically isomorphic to bands in spaces $L_p(L_q)$ is axiomatizable, see \cite{HR}, and for $1\le r\le s<\infty$ the class $\mathcal N_{r,s}$ of Nakano Banach lattices  with a variable exponent $p(\cdot)$ taking values in $[r,s]$ is axiomatizable, a result of Poitevin in \cite{P} (see also \cite{PR}). The corollary follows for $\mathcal BL_pL_q^B$ in the case $2<p,q<\infty$, resp. $\mathcal N_{r,s}^B$ in the case $2<r\le s<\infty$. The other cases are proved by duality: indeed these classes consist of reflexive Banach spaces, and it is well known (see e.g. \cite{H}) that if an ultraproduct $\prod_\U X_i$ is reflexive, then $(\prod_\U X_i)^*=\prod_\U X_i^*$ (up to a canonical isomorphism). It is then easy to see that for $1<p,q<\infty$, the axiomatizability of  $\mathcal BL_pL_q^B$ is equivalent to that of $\mathcal BL_{p'}L_{q'}^B$, where $p',q'$ are the conjugate exponents to $p,q$. The case of  Nakano spaces is a little more involved, since one has to take care of the fact that the dual norm of the Luxemburg norm is an Orlicz norm, that is only equivalent within a factor 2 to a Luxemburg norm. Thus for $1<r\le s<2$ the result for Nakano spaces and Luxemburg norm has to be derived by duality from the case $2<r\le s<\infty$ for Orlicz norm. Note however that Poitevin's result is stated in Luxemburg norm, but it holds true also by duality for the Orlicz norm in the whole range $1<r\le s<\infty$.
\end{proof}

Between the two preceding classes of examples are classes of Orlicz lattices associated with a disjointly additive convex modular satisfying a $\Delta_2$ condition \cite{CZ}. These Banach lattices are representable as Musielak-Orlicz spaces  associated with a convex Musielak-Orlicz function satisfying an uniform $\Delta_2$ estimation \cite{W81,W84}, and vice-versa. The class of convex Orlicz lattices satisfying a given $\Delta_2$ condition is closed under ultraproducts \cite{W84} and sublattices, thus axiomatizable (as Banach lattices), in both Luxemburg and Orlicz norms.

\begin{corollary}
For $2<r\le s<\infty$ or $1<r\le s<2$ the class $\mathcal {OL}_{r,s}^B$ of Banach spaces linearly isometric to an Orlicz lattice with exactly $r$-convex, $s$-concave modular is axiomatizable.
\end{corollary}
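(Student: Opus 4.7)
I would mirror exactly the template of Corollary~\ref{axiomatizability2}, using the paragraph immediately preceding the statement as the source for the Banach-lattice axiomatizability of $\mathcal{OL}_{r,s}$. The very first step is to observe that $\mathcal{OL}_{r,s}$, viewed as a class of Banach lattices (in either Luxemburg or Orlicz norm), is axiomatizable: closedness under ultraproducts is the content of Wnuk's theorem \cite{W84} together with \cite{CZ} (ultraproducts of convex Orlicz lattices satisfying a uniform $\Delta_{2}$ estimate are again of the same kind), closedness under vector sublattices follows from the definitions of exact $r$-convexity and $s$-concavity (both pass to sublattices), and closedness under surjective lattice isometries is trivial. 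The exact $r$-convexity/$s$-concavity conditions are expressed by closed conditions on the norm, so they combine with the Banach-lattice axiomatizability of the underlying Musielak--Orlicz class to give the required axiomatizability.

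For the case $2 < r \le s < \infty$: every member of $\mathcal{OL}_{r,s}$ is exactly $s$-concave for some finite $s$, hence contains no copy of $c_{0}$ and is therefore order continuous (\cite[Thm.~1.c.4]{LT}); exact $s$-concavity with $s<\infty$ also forces the norm to be strictly monotone. Combined with $r>2$, Corollary~\ref{DPisometries} yields that every linear isometry from a member of $\mathcal{OL}_{r,s}$ into another (in particular into an ultrapower of itself) preserves disjointness, so the class has property DPIU. Theorem~\ref{axiomatizability1} then delivers the axiomatizability of $\mathcal{OL}_{r,s}^{\mathcal{B}}$.

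For the case $1 < r \le s < 2$: I would copy the duality argument from the end of Corollary~\ref{axiomatizability2}. Members of $\mathcal{OL}_{r,s}$ are superreflexive (exact $r$-convexity with $r>1$ and exact $s$-concavity with $s<\infty$), so ultrapowers commute with duality on the relevant families. The dual lattice of a member of $\mathcal{OL}_{r,s}$ equipped with the dual norm is exactly $s'$-convex and $r'$-concave with $2 < s' \le r' < \infty$, so it lies in $\mathcal{OL}_{s',r'}$; hence $\mathcal{OL}_{r,s}^{\mathcal{B}}$ is, up to the isometric self-duality of reflexivity, exactly the class of preduals of members of $\mathcal{OL}_{s',r'}^{\mathcal{B}}$. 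Since the latter class has just been shown to be axiomatizable, and since for a reflexive ultraroot $X$ of a member $L$ of $\mathcal{OL}_{r,s}$ the dual $X^{*}$ is an ultraroot of $L^{*} \in \mathcal{OL}_{s',r'}$ (using $(X_{\U})^{*} = (X^{*})_{\U}$ under reflexivity), we conclude $X^{*} \in \mathcal{OL}_{s',r'}^{\mathcal{B}}$ and then $X = X^{**} \in \mathcal{OL}_{r,s}^{\mathcal{B}}$. Closure under ultraproducts and surjective isometries is automatic.

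\textbf{Main obstacle.} The only non-routine point is the Luxemburg/Orlicz norm bookkeeping in the duality step, which is precisely the subtlety already flagged for Nakano spaces in the previous corollary: the dual of a Luxemburg norm is an Orlicz norm (equivalent to, but not equal to, a Luxemburg norm), so one must state and apply the Banach-lattice axiomatizability of $\mathcal{OL}_{r,s}$ \emph{in both norms}, which is exactly the form in which \cite{W84} provides it. No new ingredient beyond the combination of Theorem~\ref{axiomatizability1}, Corollary~\ref{DPisometries}, and the reflexive-duality device of Corollary~\ref{axiomatizability2} is needed.
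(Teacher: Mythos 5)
Your proposal is correct and follows essentially the same route the paper intends: the paper gives no separate proof for this corollary, relying on the preceding paragraph for the Banach-lattice axiomatizability of the Orlicz-lattice class (closure under ultraproducts via \cite{W84} and under sublattices) and on the template of Corollary \ref{axiomatizability2} — order continuity and strict monotonicity from exact $s$-concavity, DPIU from Corollary \ref{DPisometries} when $r>2$, Theorem \ref{axiomatizability1}, and reflexive duality with the Luxemburg/Orlicz bookkeeping for $1<r\le s<2$. You have reconstructed exactly that argument, including the norm-duality subtlety the paper flags for Nakano spaces.
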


Let us finish this section with mentioning an open question concerning classes of Nakano spaces.

\begin{question}
 Is the class $\mathcal N_{r,s}^B$ of Nakano (Banach) spaces associated with any variable exponent $p(\cdot)$ with values in $[r,s]$ axiomatizable, for every $1\le r< s<\infty$? 
\end{question}

\section{Other classes with DPIU}\label{sec:other}

In this section we investigate the DPIU property for the classes of $\mathcal BL_pL_q$-Banach lattices, showing that it is satisfied  for a far larger set of indices $(p,q)$ than simply $(2,+\infty)\times (2,+\infty)$ (the case which is settled by Corollary \ref{DPisometries}). We shall use well known informations about the (linear) isometric embeddings of $L_p(L_q)$ spaces. Before, we give a simple lemma that will allow us to pass later from the real case to the complex case.

\begin{lemma}\label{re-dp-implies-comp-dp}
Let $L$ be a real Banach lattice and $1\le p,q<\infty$. If every linear isometry from $L$ into any $L_p(L_q)$ space is disjointness preserving, then any complex linear isometry from $L_\mathbb C$ into $(L_p(L_q))_\mathbb C$ is also disjointness preserving.
\end{lemma}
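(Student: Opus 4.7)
The strategy is to convert the complex isometry into a real isometry into a (real) $L_p(L_q)$ space by means of a ``$\theta$-spreading'' embedding, and then apply the hypothesis. Concretely, setting $c_q:=\frac{1}{2\pi}\int_0^{2\pi}|\cos\theta|^q\,d\theta$ and using the normalized Lebesgue measure on $[0,2\pi)$, I would define
\[
 J:(L_p(\mu;L_q(\nu)))_\comp\to L_p(\mu;L_q(\nu\times[0,2\pi))),
\]
\[
 J(a+ib)(\omega_1,\omega_2,\theta)=c_q^{-1/q}\bigl(\cos\theta\, a(\omega_1,\omega_2)+\sin\theta\, b(\omega_1,\omega_2)\bigr).
\]
Rotation invariance gives the pointwise identity $\int_0^{2\pi}|\cos\theta\, a+\sin\theta\, b|^q\,d\theta/(2\pi)=c_q(a^2+b^2)^{q/2}$, so integration in $\omega_2$ then in $\omega_1$ yields that $J$ is a real linear isometry onto its range.

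The key technical claim is that $J$ \emph{reflects} disjointness: $J(f)\perp J(g)$ in the real target if and only if $f\perp g$ in the complex source. Preservation is immediate, since $f\perp g$ forces one of $f,g$ to vanish at a.e.\ $(\omega_1,\omega_2)$, and then so does the corresponding $J(f)$ or $J(g)$ at every $\theta$. For the converse, for fixed $(\omega_1,\omega_2)$ the section $\theta\mapsto J(f)(\omega_1,\omega_2,\theta)$ is a degree-one trigonometric polynomial, hence vanishes on a $\theta$-set of positive measure only when $f(\omega_1,\omega_2)=0$. If $E:=\{(\omega_1,\omega_2):f\ne0\text{ and }g\ne0\}$ had positive measure, then by Fubini $J(f)J(g)$ would be nonzero on a positive-measure subset of $\Omega_1\times\Omega_2\times[0,2\pi)$, contradicting $J(f)\perp J(g)$.

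Given these tools the proof is quick. Given a complex linear isometry $T:L_\comp\to(L_p(L_q))_\comp$, the composition $\tilde T:=J\circ T|_L:L\to L_p(\mu;L_q(\nu\times[0,2\pi)))$ is a real linear isometry into a real $L_p(L_q)$ space, hence is disjointness preserving by hypothesis; reflection then gives that $T|_L$ preserves disjointness of real elements. To extend this to $L_\comp$: if $z=x+iy$ and $w=u+iv$ are disjoint in $L_\comp$, then Krivine's formula $|z|=(x^2+y^2)^{1/2}\ge |x|\vee|y|$ (similarly for $w$) forces the four pairs $(x,u), (x,v), (y,u), (y,v)$ to be disjoint in $L$, whence $T(x), T(y)$ are each disjoint from $T(u), T(v)$ in $(L_p(L_q))_\comp$. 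Sublinearity of the complex modulus gives $|T(z)|\le|T(x)|+|T(y)|$ and $|T(w)|\le|T(u)|+|T(v)|$, and the four pairwise disjointness relations yield $|T(z)|\wedge|T(w)|=0$, i.e.\ $T(z)\perp T(w)$.

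The main obstacle is the construction of $J$ together with its reflection property: without the $\theta$-averaging trick one cannot isometrically embed the complexification of $L_p(L_q)$ into a real $L_p(L_q)$ space (the obvious doubling into $L_p(L_q\oplus L_q)$ fails for $q\ne 2$), and the reflection property crucially relies on the observation that a nonzero $\mathbb R$-linear combination of $\cos\theta$ and $\sin\theta$ has only finitely many zeros in $[0,2\pi)$. The remainder of the argument is bookkeeping that translates between complex disjointness and disjointness of real and imaginary parts.
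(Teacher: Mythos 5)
Your proof is correct and follows essentially the same route as the paper's: the paper also embeds $(L_p(L_q))_\comp$ real-linearly and isometrically into a real $L_p(L_q)$ space by a fiberwise real isometry of $\comp$ into an $L_q$ space that reflects disjointness (using two independent normalized Gaussians $G_1,G_2$ in $L_q[0,1]$, i.e.\ $z\mapsto (\Re z)G_1+(\Im z)G_2$, where your rotation-invariance of $(\cos\theta,\sin\theta)$ is replaced by $2$-stability and your ``finitely many zeros'' observation by the a.s.\ non-vanishing of a nonzero Gaussian), then applies the hypothesis to the composition and passes from real to complex disjointness via real and imaginary parts exactly as you do.
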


\begin{proof}
Let $T: L_\mathbb C \to (L_p(L_q))_\mathbb C$ be a complex linear isometry. The restriction $T_r$ of $T$ to $\Re L_\mathbb C = L$ is a real linear isometry. As a real linear space, $(L_p(L_q))_\mathbb C$ is linearly isometric to $L_p(L_q)(\ell_2^2)$. Let $G_1,G_2$ be two independent Gaussian variables on $[0,1]$, that are normalized in $L_q[0,1]$. Then the map $S: z\mapsto (\Re z) \,G_1+ (\Im z)\, G_2$ is a real linear isometry from $\mathbb C$ into $L_q[0,1]$, and thus
\[ f\mapsto \widetilde Sf:=\Re f\otimes G_1+\Im f\otimes G_2\] 
defines a real linear isometry $\widetilde S$ from $(L_p(L_q))_\mathbb C$ into $L_p(L_q)(L_q[0,1])=L_p(L_q(L_q[0,1]))$. Note that for every $z\ne 0$, the function $Sz$ has full support in $[0,1]$. In particular $Sz_1$ and $Sz_2$ may be disjoint only if $z_1=0$ or $z_2=0$. It follows that $\widetilde Sf$, $\widetilde Sg$ are disjoint in the real Banach lattice $L_p(L_q)(L_q[0,1])$ iff $f,g$ are disjoint in the complex Banach lattice $(L_p(L_q))_\mathbb C$. Since by hypothesis the real linear isometry $\widetilde ST_r: L\to L_p(L_q(L_q[0,1]))$ necessarily preserves the disjointness, so does the map $T_r$. Now if $h_1,h_2\in L_\mathbb C$ are disjoint then both $\Re h_1$ and $\Im h_1$ are disjoint from both $\Re h_2$ and $\Im h_2$, thus $Th_1=T_r\Re h_1+iT_r \Im h_1$ is disjoint from $Th_2=T_r\Re h_2+iT_r \Im h_2$.
\end{proof}

\begin{notation}
Given a space $X=L_p(S,\Sigma,\sigma;L_q(S',\Sigma',\sigma'))$, denote by $N$ the map $X\to L_p(\Omega,\A,\mu)$ such that $N(f)(\omega)=\|f(\omega,\cdot)\|_q$ (the norm in $L_q$ of the partial function $f(\omega,\cdot)$). This map $N$ is called the {\it random norm} of the $L_p(L_q)$ space $X$ (\cite{LR,HR}). 

Clearly for all $x\in X$ and $\ph\in L_\infty(S,\Sigma,\sigma)$, we have $\ph.x\in X$ and $N(\ph x)=|\ph|N(x)$.  This is typically used with $\ph={N(v)\over N(w)}$, with $v,w\in X$ and $N(v)\le N(w)$ 
\end{notation}

\begin {proposition}\label{DP-for-LpLq-isometries} Let $1\le p\ne q<\infty$ with $q\ne 2$ and $q\not\in [p,2]$ in the case  $p\le 2$. Assume that the space  $L_q(\Omega',\A',\mu')$ has dimension greater or equal to two. If $p=1$ we assume that $L_q(\Omega')$ has dimension greater or equal to three. Then every linear isometry of $L_p(\Omega,\A,\mu;L_q(\Omega',\A',\mu')))$ into  another space $L_p(S,\Sigma,\sigma;L_q(S',\Sigma',\sigma'))$ is disjointness preserving.
\end{proposition}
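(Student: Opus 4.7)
The plan is to first reduce to real scalars, then invoke a structural characterisation of isometric embeddings between $L_p(L_q)$ spaces that forces the Lamperti form, and finally apply the classical Lamperti/Plotkin theorem fiberwise.

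The reduction to real scalars uses Lemma~\ref{re-dp-implies-comp-dp}: since the hypotheses on $p$, $q$ and the fiber dimension are insensitive to the scalar field, it suffices to show that every real linear isometry $T\colon L_p(\Omega;L_q(\Omega'))\to L_p(S;L_q(S'))$ is disjointness preserving. Fix such a $T$ and a disjoint pair $f,g$ in the source; the goal is to prove $|Tf|\wedge|Tg|=0$.

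For the main step I would invoke the structural results of Greim--Rajagopalan \cite{GR} and Jamison--Kaminska--Plichko \cite{JKP} on isometric embeddings between $L_p(X)$ spaces when $X$ is not isomorphic to a closed subspace of $L_p$: such embeddings are of Lamperti type, i.e.\ there exist a measurable base transformation $\sigma\colon S\to\Omega$, a scalar weight $h\in L_\infty(S)$, and a measurable field of isometric embeddings $U_s\colon X\to X$ with $(Tf)(s)=h(s)\,U_s\bigl(f(\sigma(s))\bigr)$. Specialised to $X=L_q(\Omega')$, this applicability hypothesis --- that $L_q(\Omega')$ is not isomorphic to a closed subspace of $L_p$ --- is precisely what the excluded cases in our statement ensure: the standard isometric representations of $L_q$ inside $L_p$ (Gaussian embeddings, and the Kadec--Pe\l czy\'nski/Khintchine type embeddings) occur exactly when $q=p$, $q=2$, or $p\le 2\le q\le 2$ --- all ruled out by the hypothesis $q\not\in[p,2]$ for $p\le 2$. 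The extraction of the Lamperti form from the isometry is where the random norm $N$ of the Notation is used centrally: the identity $N(\varphi u)=|\varphi|Nu$ together with the substitutions $\varphi=N(v)/N(u)$ allow one to show that $T$ intertwines the $L_\infty(\Omega)$-module structure with the $L_\infty(S)$-module structure, from which the base transformation $\sigma$ and the fiber field $(U_s)$ can be disintegrated.

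Once the Lamperti form is in hand, disjointness preservation follows in two pieces. The base transformation $\sigma$ is a point-map and so obviously lattice preserving on the $\Omega$-variable. Each fiber operator $U_s$ is an isometric embedding of $L_q(\Omega')$ into $L_q(S')$; since $q\neq 2$ and $\dim L_q(\Omega')\ge 2$, the classical Lamperti/Plotkin theorem in $L_q$ forces $U_s$ to be disjointness preserving. The strengthened dimension bound $\dim\ge 3$ when $p=1$ is required to exclude the exceptional two-dimensional embeddings of $\ell_q^2$ into $L_1$-factors from Hardin's classification, which are isometric but not disjointness preserving. Combining the two pieces shows $Tf\perp Tg$ whenever $f\perp g$, completing the proof. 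The main obstacle I anticipate is the Lamperti disintegration itself, especially in the boundary case $p=1$ where the standard Clarkson-type arguments for isometry rigidity degenerate; reconciling that boundary case with the other ranges is precisely the reason for the stronger dimension hypothesis, and is where I expect the references \cite{GR,JKP} to be used most heavily.
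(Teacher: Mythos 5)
Your overall skeleton (reduce to real scalars via Lemma \ref{re-dp-implies-comp-dp}, then invoke the structure theory of \cite{GR}) matches the paper's, but the structural input you rely on is stronger than what \cite{GR} actually provides, and this is where the gap lies. What \cite[Theorem 5.1]{GR} gives for an into-isometry $T$ between $L_p(L_q)$ spaces is only the intertwining relation $N(Tf)=\widetilde T N(f)$, where $N$ is the random norm and $\widetilde T$ is a positive linear isometry between the base $L_p$-spaces; it does not give the full Lamperti disintegration $(Tf)(s)=h(s)\,U_s\bigl(f(\sigma(s))\bigr)$ with a point map $\sigma$ and a measurable field of fiber isometries $U_s$. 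For a non-surjective embedding the positive base isometry $\widetilde T$ is induced by a measure-algebra embedding plus a change of density, not by a point map, and extracting a measurable field $(U_s)$ from the intertwining is a genuinely extra measurable-selection argument that neither \cite{GR} nor \cite{JKP} supplies. Your proof as written therefore invokes a theorem that is not in the cited literature. (Incidentally, \cite{GR} is Guerre--Raynaud and \cite{JKP} is Jamison--Kami\'nska--Lin on Musielak--Orlicz spaces; the latter plays no role in this proposition.)

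The paper avoids the disintegration entirely: since $\widetilde T$ is a positive isometry between $L_p$-spaces it is a lattice homomorphism, hence preserves $q$-sums, and disjointness of $f,g$ in $L_p(L_q)$ is equivalent to the pointwise Clarkson equality $N(f+g)^q+N(f-g)^q=2(N(f)^q+N(g)^q)$ (valid because $q\ne 2$); applying $\widetilde T$ transfers this identity to $Tf,Tg$, and disjointness of the images follows with no fiberwise analysis at all. Two further points. First, the hypothesis that $L_q(\Omega')$ has dimension at least three when $p=1$ is needed because the proof of \cite[Theorem 5.1]{GR} requires isometric copies of $\ell_q^3$ inside $X$ in that case; it is not about excluding exceptional two-dimensional embeddings into $L_1$-factors. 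Second, the case where the base $L_p(\Omega,\A,\mu)$ is one-dimensional falls outside the formal scope of \cite[Theorem 5.1]{GR} and must be treated separately, as the paper does at the end of its proof. If you replace your Lamperti disintegration by the random-norm intertwining and finish with the Clarkson-equality characterization of disjointness, your argument becomes correct and coincides with the paper's.
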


\begin{proof} By Lemma \ref{re-dp-implies-comp-dp} we need only treat the real $L_p(L_q)$ case.
By  \cite[Theorem 5.1  and Proposition 1.6]{GR}, under the conditions of the Proposition and if $L_p(\Omega,\A,\mu)$ has dimension at least 2, then for every linear isometry of  $X:=L_p(\Omega,\A,\mu;L_q(\Omega',\A',\mu')))$ into itself there exists a positive linear isometry $\widetilde T$ from  $L_p(\Omega,\A,\mu)$ into itself such that 
$$N(Tf)=\widetilde T N(f)$$
 The proof is based on an analysis of isometric copies of 2-dimensional $\ell_p^2$ and $\ell_q^2$ spaces in $X$ (and sometimes 3-dimensional subspaces, namely $\ell_q^3$-subspaces if $p=1$, or $\reel\oplus_p(\reel\oplus_q\reel)$ in the cases $q<p<2$, or $p=2$ or $q=1$). The conditions on $X$ insure that these copies exist in $X$. Thus the theorem 5.1  in \cite{GR} extends trivally to isometries from $X$ into any $L_p(L_q)$ space $Y$,  with the same proof ($\widetilde T$ is now a positive linear isometry from $L_p(\Omega,\A,\mu)$ into $L_p(S,\Sigma,\sigma)$).
Since isometries between $L_p$-spaces are disjointness preserving, positive isometries are lattice homomorphisms. Thus the isometry $\widetilde T$ preserves the ``$q$-sums'':
$$\widetilde T((\ph^q+\psi^q)^{1/q})=((\widetilde T\ph)^q+(\widetilde T\psi)^q)^{1/q}$$
Disjointness in $X$ is characterized by the equation
$$N(f+g)^q+N(f-g)^q=2(N(f)^q+N(g)^q)$$
which implies by the preceding
$$N(Tf+Tg)^q+N(Tf-Tg)^q=2(N(Tf)^q+N(Tg)^q)$$
and the disjointness of the images $Tf,Tg$.

The case where $L_p(\Omega,\A,\mu)$ is trivial (dimension 1) is not formally evoked in \cite{GR}. In this case clearly the space $X$ cannot contain the three dimensional spaces $\reel\oplus_p(\reel\oplus_q\reel)$. However these subspaces were considered there only for proving that $T$ preserves the disjointness of $L_p$-supports, a property which is now trivial. The rest of the argument is valid and shows that there exists $\ph_0\in L_p(S,\Sigma,\sigma)$ such that $N(Tf)=\|f\| \ph_0$ for all $f\in X$ and thus again $T$ is disjointness preserving (this is also a direct consequence of \cite[prop. 5.4 and 5.8]{GR}).
\end{proof}

\begin{recall}
Let $1\le p\ne q<\infty$. A $\mathcal BL_pL_q$ Banach lattice $X$ is called {\it fiber-atomless} if it can be represented as a band in a space $L_p(L_q)$, where the Banach lattice $L_q$ is atomless (equivalently it is the $L_q$-space of an atomless measure space). 
Let $X$ be a band in a space $L_p( L_q)$ and $B$ the smallest band $B$ in $L_q$ such that $X\subset L_p(B)$.
Then $X$ is  fiber-atomless if the Banach lattice $B$ is atomless \cite[Lemma 2.3]{HR-11}. For given $p\ne q$, the class of fiber-atomless $\mathcal BL_pL_q$ Banach lattices is axiomatizable \cite[Proposition 2.5]{HR-11}.\end{recall}

\begin{lemma}\label{sep-fib-atl}
Let $X$ be a fiber-atomless $\mathcal \mathcal BL_pL_q$ Banach lattice, and $E$ be a separable subset of $X$. There is a separable closed vector sublattice $X_0$ of $X$, containing $E$ and which is itself a fiber-atomless $\mathcal BL_pL_q$ Banach lattice.
\end{lemma}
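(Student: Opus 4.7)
The plan is a downward L\"owenheim--Skolem style construction inside the $L_p(L_q)$-representation of $X$. First I use the fiber-atomlessness hypothesis to write $X$ as a band in some $L_p(S,\Sigma,\sigma;L_q(S',\Sigma',\sigma'))$ where $\sigma'$ is atomless on $\Sigma'$. I pick a countable dense subset $(e_n)_{n\ge 1}$ of $E$. By Bochner measurability each $e_n:S\to L_q(S')$ is an a.e.\ norm limit of countably-valued $\Sigma$-step functions, and the $L_q$-values arising in these step functions are themselves norm limits of $\Sigma'$-simple functions. Collecting all the base and fiber sets appearing in these approximations yields countable families $\mathcal A\subset\Sigma$ and $\mathcal B\subset\Sigma'$, and I set $\Sigma_0:=\sigma(\mathcal A)$.

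The delicate step is to produce a countably generated sub-$\sigma$-algebra $\Sigma_0'\subset\Sigma'$ containing $\mathcal B$ on which $\sigma'$ is still atomless. For this I will choose a measurable function $\tau:S'\to\reel$ whose push-forward $\tau_*\sigma'$ is continuous (such a $\tau$ exists since $\sigma'$ is atomless and $\sigma$-finite), let $\mathcal H=\tau^{-1}(\mathrm{Borel}(\reel))$, and set $\Sigma_0':=\sigma(\mathcal B\cup\mathcal H)$. Atomlessness of $\sigma'|_{\Sigma_0'}$ then follows from the observation that for any $A\in\Sigma_0'$ of positive measure, the function $t\mapsto\sigma'(A\cap\tau^{-1}((-\infty,t]))$ is continuous in $t$ (since $\tau_*\sigma'$ is) and takes all values in $[0,\sigma'(A)]$, so $A$ can be split in $\Sigma_0'$. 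By construction each $e_n$ belongs to
\[
Y_0:=L_p(S,\Sigma_0,\sigma|_{\Sigma_0};L_q(S',\Sigma_0',\sigma'|_{\Sigma_0'})),
\]
which embeds naturally as a closed sublattice of $L_p(\Sigma;L_q(\Sigma'))$, is separable (countable generation of both $\sigma$-algebras together with $p,q<\infty$), and is itself a fiber-atomless $\mathcal BL_pL_q$-space.

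I will then set $X_0:=X\cap Y_0$. This is a closed subspace, and an ideal of $Y_0$: if $g\in Y_0$ satisfies $|g|\le|f|$ for some $f\in X_0\subset X$, the band property of $X$ inside $L_p(\Sigma;L_q(\Sigma'))$ forces $g\in X$, hence $g\in X_0$. Since $Y_0$ is order continuous ($p,q<\infty$), this closed ideal is in fact a band of $Y_0$, so by \cite[Lemma 2.3]{HR-11} the Banach lattice $X_0$ is itself a band in an $L_p(L_q)$-space with atomless fiber, i.e.\ a fiber-atomless $\mathcal BL_pL_q$-lattice. Separability transfers from $Y_0$ to $X_0$, and $X_0$ contains $E$ by closedness and density of $(e_n)$ in $E$. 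The main technical point I expect is the construction and verification of the atomless countably generated $\Sigma_0'$; once that is in place everything else amounts to routine verification.
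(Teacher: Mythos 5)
Your proof is correct and is essentially the paper's own argument --- an explicit downward L\"owenheim--Skolem construction inside a representation of $X$: approximate a countable dense subset of $E$ by elementary tensors/step functions, generate separable base and fiber sublattices, repair atomlessness of the fiber, and intersect the resulting separable $L_p(L_q)$-sublattice with the band $X$ (closed ideal $+$ order continuity $\Rightarrow$ band). The only differences are in implementation: you encode the fiber sublattice as a countably generated sub-$\sigma$-algebra and restore atomlessness globally by adjoining the $\sigma$-algebra of a function $\tau$ with continuous distribution (note you should first make the tacit but standard reduction to the $\sigma$-finite support of the countably many fiber values involved before asserting that such a $\tau$ exists), whereas the paper replaces each atom of the generated fiber sublattice by an atomless separable sublattice of the band it generates.
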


\begin{proof}
This follows immediately from Downwards L\"owenheim-Skolem theorem \cite[prop. 9.13]{HI} and the axiomatizabilty of the class of fiber-atomless $\mathcal BL_pL_q$ Banach lattices. Let us however give a direct (but tedious) proof  for the reader unfamiliar with this kind of logic arguments. Assume that $X$ is a band in $L_p(\Omega,\A,\mu;L_q(S,\Sigma,\nu))$. We may assume that $E$ is finite or countable, and that each element of $f\in E$ belongs to the algebraic tensor product $L_p(\Omega,\A,\mu)\otimes L_q(S,\Sigma,\nu)$, i. e. has the form $f=\sum_{i=1}^n g_i\otimes h_i$, $g_i\in L_p(\Omega,\A,\mu)$, $h_i\in 
L_q(S,\Sigma,\nu)$, $i=1\dots n$. Recall that 
\[ g\otimes h(\omega,s)= g(\omega)h(s) \]
Choose such a decomposition for each $f\in E$,  let $G_f$, resp. $H_f$ be the set of the first factors $g_i$, resp. $h_i$ appearing in this decomposition, and $G$, resp. $H$ be the closed vector sublattice generated by $\cup_{f\in E}G_f$, resp. $\cup_{f\in E}H_f$ in $L_p(\Omega,\A,\mu)$, resp. $L_q(S,\Sigma,\nu)$. Then $G$, $H$ are separable,  and isomorphic as vector lattices to some spaces $L_p(\Omega_0,\A_0,\mu_0)$, resp. $L_q(S_1,\Sigma_1,\nu_1)$. Note that $H$ is not necessarily diffuse, but it has at most countably many atoms $a_i$. Each of these atoms generates a band $B_i$ in $L_q(S,\Sigma,\nu)$ which is atomless. A standard measure-theoretic argument shows that $B_i$ contains an atomless separable closed sublattice $L_i$ containing $a_i$ as element. Replacing each component $\mathbb K. a_i$ in $H$ by $L_i$, we enlarge $H$ to a closed, separable and  atomless vector sublattice $H_0$ of $L_q(S,\Sigma,\nu)$, which is thus isomorphic to some separable atomless $L_q(S_0,\Sigma_0,\nu_0)$. Let $T:L_p(\Omega_0,\A_0,\mu_0)\to G$ and $U:L_q(S_0,\Sigma_0,\nu_0)\to H_0$ be two surjective vector lattice isomorphisms, then the linear map $T\otimes U: L_p(\Omega_0,\A_0,\mu_0)\otimes L_q(S_0,\Sigma_0,\nu_0)\to L_p(\Omega,\A,\mu)\otimes L_q(S,\Sigma,\nu)$ is well defined and verifies the equality  $N(T\otimes U(f))=TN(I\otimes U f)=T N(f)$ for every $f$ in its domain ($N: L_p(L_q)\to L_p $ is the random norm). Thus $T\otimes U$ is isometric for the $L_p(L_q)$ norms and extends by density to $L_p(\Omega_0,\A_0,\mu_0;L_q(S_0,\Sigma_0,\nu_0))$. This map is also positive and disjointness preserving, hence a lattice isometry. Its range $Y$ is a separable closed sublattice of $L_p(\Omega,\A,\mu;L_q(S,\Sigma,\nu))$, which contains $E$ by construction. Then $X_0=Y\cap X$ is an order-ideal in $Y$, and is order isometric to an order ideal (equivalently, a band) in $L_p(\Omega_0,\A_0,\mu_0;L_q(S_0,\Sigma_0,\nu_0))$.
\end{proof}

\begin{proposition}\label{fiber-atomless-DPIU}
Let $1\le p\ne q<\infty$ with $q\ne 2$ and $q\not\in [p,2]$ in the case  $p\le 2$. Then every linear isometry from a fiber-atomless $\mathcal BL_pL_q$ Banach lattice into any $\mathcal BL_pL_q$ Banach lattice is disjointness preserving. In particular every fiber-atomless $\mathcal BL_pL_q$ Banach lattice has property DPIU.
\end{proposition}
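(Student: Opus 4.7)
The plan is to reduce $T$ to a linear isometry between \emph{full} Bochner spaces $L_p(L_q)$ with atomless $L_q$-fiber, so that Proposition \ref{DP-for-LpLq-isometries} applies directly. By Lemma \ref{re-dp-implies-comp-dp} it suffices to treat the real case. Since disjointness is pairwise, given any $x,y\in X$ we may use Lemma \ref{sep-fib-atl} to replace $X$ by a separable fiber-atomless sublattice containing $x,y$, and $T$ by its restriction; assume from here on that $X$ is separable.

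\textbf{Representation of $X$ as a full $L_p(L_q)$.} A separable fiber-atomless $X$ is (up to lattice isometry) a band in some $L_p(\Omega,\mu;L_q(\Omega',\nu))$ with $(\Omega',\nu)$ separable and atomless; after lattice-isometric reparametrization we may take $L_q(\Omega',\nu)=L_q([0,1],\mathrm{Leb})$. The band is described by a measurable field $\omega\mapsto E_\omega\subset [0,1]$ of (atomless) measurable subsets. Set $A:=\{\omega:\rho(\omega)>0\}$ where $\rho(\omega):=\nu(E_\omega)\in(0,1]$, and choose measure-preserving isomorphisms $\phi_\omega:(E_\omega,\nu|_{E_\omega})\to([0,\rho(\omega)],\mathrm{Leb})$ measurably in $\omega$ (routine standard-Borel selection in the separable setting). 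Define
\[
\Phi(f)(\omega,t)\;:=\;\rho(\omega)^{1/q}\, f\bigl(\omega,\phi_\omega^{-1}(t\rho(\omega))\bigr),\qquad \omega\in A,\ t\in[0,1].
\]
A fiberwise change of variables gives $\|\Phi(f)(\omega,\cdot)\|_{L_q[0,1]}=\|f(\omega,\cdot)\|_{L_q(E_\omega)}$, so $\Phi$ is a surjective lattice isometry from $X$ onto the full $L_p(L_q)$ space $L_p(A,\mu;L_q[0,1])$, whose $L_q$-fiber is atomless and hence infinite-dimensional.

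\textbf{Application and DPIU.} Embed $Y$ lattice-isometrically as a band in some $L_p(S,\sigma;L_q(S',\sigma'))$ via an inclusion $j$, and form
\[
\widetilde T\;:=\;j\circ T\circ\Phi^{-1}\;:\;L_p(A;L_q[0,1])\longrightarrow L_p(S;L_q(S')).
\]
This is an isometry between full $L_p(L_q)$ spaces with domain $L_q$-fiber of infinite dimension (in particular $\ge 3$ even when $p=1$), and the standing hypotheses on $(p,q)$ are exactly those of Proposition \ref{DP-for-LpLq-isometries}. Hence $\widetilde T$ preserves disjointness; since $\Phi$ and $j$ are lattice embeddings, so does $T$. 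The DPIU statement is then immediate: by the axiomatizability recalled just before the Proposition, an ultrapower $X_\U$ of a fiber-atomless $\mathcal{BL}_pL_q$ lattice is again a $\mathcal{BL}_pL_q$ lattice, so any linear isometry $X\to X_\U$ is a special case of what was just proved.

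\textbf{Main obstacle.} The only delicate point is the measurable fiberwise standardization $\Phi$: identifying the atomless $L_q$-field $\omega\mapsto L_q(E_\omega)$ with the constant fiber $L_q[0,1]$ in a $\omega$-measurable way. The initial separable reduction via Lemma \ref{sep-fib-atl} is made precisely so that this becomes a routine standard-Borel selection among atomless separable $L_q$-spaces. Once $\Phi$ is in hand, the result drops out of Proposition \ref{DP-for-LpLq-isometries}.
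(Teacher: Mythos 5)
Your proof is correct and follows essentially the same route as the paper: reduce to a separable fiber-atomless sublattice containing the given disjoint pair via Lemma \ref{sep-fib-atl}, identify that sublattice with a full $L_p(L_q[0,1])$, and apply Proposition \ref{DP-for-LpLq-isometries}. The only difference is that where the paper simply cites the isometric classification of separable $\mathcal{B}L_pL_q$ Banach lattices (\cite[p.~207]{HR}) for this identification, you construct the standardizing lattice isometry $\Phi$ by hand; this is legitimate (the measurable fiberwise standardization can even be made explicit, e.g.\ $\phi_\omega(s)=\nu(E_\omega\cap[0,s])$), and amounts to reproving the cited classification in the fiber-atomless case.
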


\begin{proof}
Let $X$ be a fiber-atomless $\mathcal BL_pL_q$ Banach lattice, and $T: X\to L_p(L_q)$ be a linear isometry (it is enough to consider this case where the target $\mathcal BL_pL_q$ Banach lattice is $L_p(L_q)$, since any $\mathcal BL_pL_q$ Banach lattice is embedded in some $L_p(L_q)$ Banach lattice). Let $x,y\in X$ be a pair of disjoint elements. By Lemma \ref{sep-fib-atl} there is a separable sublattice $Y$ of $X$, containing $x,y$ and which is itself a fiber-atomless $\mathcal BL_pL_q$ Banach lattice.  By the isometric classification of separable $\mathcal BL_pL_q$ Banach lattices \cite[p. 207]{HR}, $Y$ is  isometrically isomorphic as (real or complex) Banach lattice with a space $L_p(L_q[0,1])$, where $L_p$ is some separable $L_p$ space. An application of Proposition \ref{DP-for-LpLq-isometries} to $T\mid_Y$ shows now that $Tx, Ty$ are disjoint.
\end{proof}

\begin{corollary}
Let $1< p\ne q<\infty$ with $q\ne 2$ or $p=1$, $q\in(2,\infty)$. Then the class of fiber-atomless $\mathcal BL_pL_q$ Banach spaces is axiomatizable. 
\end{corollary}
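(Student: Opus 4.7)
The proof is essentially an assembly of the machinery already developed in the paper, so my plan is to treat it as a three-line corollary of earlier results. The main ingredients will be (i) the Banach-lattice axiomatizability of the class of fiber-atomless $\mathcal{BL}_pL_q$ lattices from \cite[Proposition 2.5]{HR-11}, recalled just above, (ii) Proposition \ref{fiber-atomless-DPIU} giving property DPIU under the stated index conditions, and (iii) the transfer theorem \ref{axiomatizability1} itself.

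First I would note that for $1\le p,q<\infty$ the spaces $L_p(L_q)$ are order continuous, hence so are all their bands; thus the class under consideration consists of order continuous Banach lattices. Second, by \cite[Proposition 2.5]{HR-11} the class of fiber-atomless $\mathcal{BL}_pL_q$ Banach lattices is closed under lattice isometries, ultraproducts and ultraroots, i.e.\ it is axiomatizable in the Banach lattice language. Third, under the hypotheses imposed on the pair $(p,q)$ (which are precisely those of Proposition \ref{fiber-atomless-DPIU}, either $p>2$ and $q\ne p, q\ne 2$; or $1<p\le 2$ with $q<p$ or $q>2$; or $p=1$ and $q\in(2,\infty)$), every fiber-atomless $\mathcal{BL}_pL_q$ Banach lattice has the DPIU property.

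With these three facts in hand, the hypotheses of Theorem \ref{axiomatizability1} are all met: we have an axiomatizable class of order continuous Banach lattices, each of whose members satisfies DPIU. The conclusion of that theorem is exactly that the corresponding class $\C^B$ of underlying Banach spaces is axiomatizable, which is what we wanted.

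There is no real obstacle here beyond verifying the index bookkeeping: one has to check that the various ranges of $(p,q)$ appearing in the corollary genuinely fall within the scope of Proposition \ref{fiber-atomless-DPIU} (which, in the borderline case $1<p\le 2$, requires $q\notin[p,2]$). Once this is confirmed, the proof is just the citation chain $\text{[HR-11, Prop.~2.5]} + \text{Prop.~\ref{fiber-atomless-DPIU}} + \text{Thm.~\ref{axiomatizability1}}$.
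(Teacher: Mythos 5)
Your citation chain covers only part of the corollary's range of indices, and the part it misses cannot be recovered by the same route. The first alternative in the hypothesis, $1<p\ne q<\infty$ with $q\ne 2$, includes the case $1<p<q<2$; but Proposition \ref{fiber-atomless-DPIU} requires $q\notin[p,2]$ when $p\le 2$, which excludes exactly this case. Your case list for $1<p\le 2$ (``$q<p$ or $q>2$'') silently drops $p<q<2$, even though the corollary allows it, and your closing remark that one only needs to ``confirm'' the index bookkeeping is where the argument breaks: the confirmation fails. Nor is this a repairable technicality --- as the paper notes in its introduction, for $p<q\le 2$ the space $L_p(L_q)$ embeds isometrically into $L_p$ by a map that does not preserve disjointness, so the DPIU property genuinely fails in that range and Theorem \ref{axiomatizability1} is simply not applicable there.

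The paper closes this gap by duality: for $1<p<q<2$ the members of $\mathcal BL_pL_q$ are reflexive, ultrapowers commute with duals for reflexive ultraproducts, and the dual class is $\mathcal BL_{p'}L_{q'}$ with $2<q'<p'<\infty$, which lies in the range already handled by the DPIU argument; axiomatizability then transfers back to the original class. For all the other admissible pairs $(p,q)$ --- namely $p=1$ with $q>2$, and $p>1$ with $q\ne 2$ and $q\notin[p,2]$ --- your assembly of \cite[Prop.~2.5]{HR-11}, Proposition \ref{fiber-atomless-DPIU} and Theorem \ref{axiomatizability1} coincides with the paper's proof and is correct.
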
 

\begin{proof}
If $1\le p\ne q<\infty$ with $q\ne 2$ and $q\not\in [p,2]$ in the case  $p\le 2$, this results by Theorem \ref{axiomatizability1} from Proposition \ref{fiber-atomless-DPIU} and the fact that the class of fiber-atomless $\mathcal BL_pL_q$ Banach lattices is axiomatizable. In the remaining case $1<p<q<2$ this results by duality from the case $2<q<p<\infty$. 
\end{proof}

\begin{remark}
It is natural to exclude the case $q=2$ in the preceding statement since atomless Hilbert lattices cannot be axiomatized in Banach space language ($L_2[0,1]$ and $\ell_2$ are linearly isometric). By contrast the case $p=1$, $q\in (1,2)$ makes sense but it is not accessible by the techniques developed in this paper.
\end{remark}

Now we shall consider some axiomatizable classes of $BL_pL_q$-Banach lattices which are larger than the class of fiber-atomless $BL_pL_q$-Banach lattices, but still have DPIU property.

\begin{recall}[{\cite[section 2]{HR-11}}] Two elements $x$, $y$ of a space $L_p(L_q)$ are said {\it base-disjoint} if $N(x)\wedge N(y)=0$ where $N:L_p(L_q)\to L_q$ is the random norm (this notion makes sense only if $p\ne q$). An equivalent condition is that  $|x|\wedge |y|=0$ and $\|x+y\|^p=\|x\|^p+\|y\|^p$. This last condition may be taken as a definition for  base-disjoint elements in a $\mathcal BL_pL_q$-lattice $X$ and it is clearly invariant under Banach lattice isomorphisms. In particular the initial condition $N(x)\wedge N(y)=0$ will be realized or not simultaneously for all representations of $X$ as a band in a $L_p(L_q)$ space.
\end{recall}

\begin{definition}
Let us say that a band $B$ in a $\mathcal BL_pL_q$-Banach lattice is a base-band if every element of $X$ disjoint from $B$ is in fact base-disjoint from $B$.
\end{definition}

It is easy to see that base-bands in a $L_p(L_q)$-Banach lattice are the bands of the form $B(L_q)$, where $B$ is a band in $L_p$. More generally for any representation of a $\mathcal BL_pL_q$-Banach lattice X in a $L_p(L_q)$-Banach lattice, the base-bands of $X$ are exactly the traces $X\cap Z$ of the base-bands $Z$ in $L_p(L_q)$.

The base-band generated by an element $x$ of a $\mathcal BL_pL_q$-Banach lattice $X$ is the smallest base-band in $X$ containing $x$. We shall call it the {\it base-support} of $x$. If $X$ is represented as a band in some $L_p(L_q)$ space, and $N: L_p(L_q)\to L_p$ is the random norm, then the base-support of $x$ is $B(L_q)\cap X$, where $B$ is the band generated by  $N(x)$ in $L_p$. Thus $x,y\in X$ have the same base-support iff $N(x),N(y)$ generate the same band in $L_p$.

\begin{notation}
For $n\ge 1$,  we denote  by $\mathcal BL_pL_q^{\ge n}$ the subclass of $\mathcal BL_pL_q$ consisting of Banach lattices in which every base-band contains a lattice isometric copy of the $n$-dimensional lattice  $\ell_q^n$.
\end{notation}

\begin{lemma}\label{fiber-dlmension} Let $n\ge 1$ be an integer.

a) The Banach lattices $L_p(\ell_q^n)$ belong to $\mathcal BL_pL_q^{\ge n}$ but not to $\mathcal BL_pL_q^{\ge n+1}$.

b) If a $\mathcal BL_pL_q$-Banach lattice belongs to $\mathcal BL_pL_q^{\ge n}$ but not to $\mathcal BL_pL_q^{\ge n+1}$ it has a base-band lattice-isometric to a $L_p(\ell_q^n)$-lattice. 

c) In a Banach lattice belonging to $\mathcal BL_pL_q^{\ge n}$, for any nonzero element $x$ there is a system $(y_1,\dots,y_n)$ consisting of $n$ disjoint vectors with the same base-support as $x$ and  spanning  a $\ell_q^n$-sublattice.\end{lemma}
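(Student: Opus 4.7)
The plan is to work throughout in a fixed representation $X\subset L_p(\Omega;L_q(S))$ with random norm $N\colon X\to L_p(\Omega)$ and to read off everything from the fibrewise structure. Central is the following observation, which I will prove first and reuse everywhere: if $n$ pairwise disjoint vectors $z_1,\dots,z_n$ of $X$ generate a lattice isometric to $\ell_q^n$, then the $N(z_i)$ are pairwise proportional, so equal to $c_i\phi$ for scalars $c_i\ge 0$ and a single $\phi\ge 0$; in particular the $z_i$ share a common base-support, namely the band generated by $\phi$. Indeed, specialising $\|\sum a_iz_i\|^q=\sum|a_i|^q\|z_i\|^q$ to two indices $(i,j)$ and rewriting via $N$ (using that fibrewise disjointness gives $N(z_i+z_j)^q=N(z_i)^q+N(z_j)^q$) yields the equality case
\[
\bigl\|(N(z_i)^q+N(z_j)^q)^{1/q}\bigr\|_p=\bigl(\|N(z_i)\|_p^q+\|N(z_j)\|_p^q\bigr)^{1/q}
\]
of the Minkowski inequality in $L_{p/q}$ (forward when $p>q$, reverse when $p<q$); in either case, for $p\ne q$, equality forces $N(z_i)\propto N(z_j)$. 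On $\{\phi>0\}$ the fibre values $z_i(\omega)\in L_q(S)$ are then pairwise disjoint and nonzero.

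For part (a), the easy direction comes from exhibiting, inside an arbitrary base-band $L_p(A;\ell_q^n)$ of $L_p(\ell_q^n)$, the system $\mu(E)^{-1/p}\,1_E\otimes u_i$ for $i=1,\dots,n$, where $E\subset A$ has finite positive measure and $u_1,\dots,u_n$ are the atoms of $\ell_q^n$. The negative direction follows at once from the preliminary observation: $n+1$ pairwise disjoint vectors spanning $\ell_q^{n+1}$ would produce, on $\{\phi>0\}$, at least $n+1$ pairwise disjoint nonzero fibre vectors in the $n$-atomic lattice $\ell_q^n$, which is impossible.

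For part (b), choose a base-band $B_0$ of $X$ containing no lattice copy of $\ell_q^{n+1}$, and represent $B_0$ as a band in $L_p(A;L_q(S))$ with measurable fibre-dimension $d\colon A\to\{1,2,\dots,\infty\}$. Applying $X\in\mathcal BL_pL_q^{\ge n}$ to every sub-base-band of $B_0$ gives $d\ge n$ almost everywhere; conversely, if $\{d\ge n+1\}$ had positive measure, a measurable selection of $n+1$ disjoint atoms in those fibres would produce an $\ell_q^{n+1}$ lattice in $B_0$, contradicting its choice. Hence $d\equiv n$ a.e.\ on $A$, so $B_0$ is lattice isometric to $L_p(A;\ell_q^n)$. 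For part (c), let $B_x$ be the base-band generated by $x$; it still satisfies $\mathcal BL_pL_q^{\ge n}$. The preliminary observation ensures that every nonzero base-sub-band $F$ of $B_x$ contains an $\ell_q^n$ sublattice with a common base-support $F'\subset F$. Zorn's lemma then yields a maximal disjoint family $(F_k)_{k\in K}$ of base-sub-bands of $B_x$, each carrying (after rescaling) an $\ell_q^n$ system $\{y_1^{(k)},\dots,y_n^{(k)}\}$ of common base-support $F_k$ and common norm $\alpha_k>0$. Maximality forces $\bigsqcup_kF_k=B_x$, and $K$ is countable because the base-support of $x$, being that of a single element of $L_p$, is $\sigma$-finite. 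Renormalising so that $\sum_k\alpha_k^p=1$ and setting $y_i:=\sum_ky_i^{(k)}$, base-disjointness of the $F_k$ gives
\[
\Bigl\|\sum_{i=1}^n a_iy_i\Bigr\|^p=\sum_k\Bigl\|\sum_ia_iy_i^{(k)}\Bigr\|^p=\sum_k\alpha_k^p\Bigl(\sum_i|a_i|^q\Bigr)^{p/q}=\Bigl(\sum_i|a_i|^q\Bigr)^{p/q},
\]
which is the defining identity of an $\ell_q^n$ lattice, while the base-support of each $y_i$ equals $\bigcup_kF_k=B_x$, the base-support of $x$.

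The principal technical obstacle is the measurable-dimension bookkeeping in part (b), and implicitly in the maximality step of (c): one needs the fibre dimension of a band of $L_p(L_q)$ to be well-defined, measurable, and stable under restriction to sub-base-bands. I expect to extract this from the representation-theoretic machinery already used in \cite{HR-11}; failing that, I would establish it by hand by countable measurable selection of disjoint atoms in the $L_q$-fibres, which also supplies the measurable embedding of $\ell_q^{n+1}$ needed to close the contradiction in (b).
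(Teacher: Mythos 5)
Your proof is essentially correct, and its engine --- the equality case of the (direct or reverse) Minkowski inequality in $L_{p/q}$ forcing proportional random norms for disjoint vectors spanning $\ell_q^k$ --- is exactly the paper's ``key argument''; parts (a) and (c) then run along the same lines as the paper (your Zorn's-lemma assembly over a maximal disjoint family of base-bands is the paper's ``standard exhaustion argument'' made explicit, and your observation that $K$ is countable because the base-support of a single element is $\sigma$-finite is the right justification for the $p$-summation step). Where you genuinely diverge is part (b). The paper never introduces a fibre-dimension function: it takes the $\ell_q^n$ system $(x_1,\dots,x_n)$ supplied by the hypothesis inside a base-band with no $\ell_q^{n+1}$, forms the base-band $Z$ it generates, and checks two purely lattice-theoretic facts --- no nonzero element of $Z$ is disjoint from all the $x_j$, and no $x_j$ splits into two disjoint but not base-disjoint pieces --- after which $(f_1,\dots,f_n)\mapsto\sum_j (f_j/N(x_j))\,x_j$ is an explicit lattice isometry from $L_p(A;\ell_q^n)$ onto $Z$. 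Your route instead requires a measurable fibre-dimension $d(\omega)$ together with measurable selections of disjoint atoms in the fibres, which you correctly flag as the principal obstacle and defer to the representation machinery of \cite{HR-11}. That machinery does exist, so I would not call this a gap, but be aware that it is the bulk of the work in your version of (b), and that the paper's argument shows it can be bypassed entirely: the hypothesis hands you the $\ell_q^n$ system, and everything else is band algebra. What your approach buys in exchange is a slightly stronger conclusion (the \emph{chosen} base-band $B_0$ itself, not merely some base-band inside it, is isometric to $L_p(A;\ell_q^n)$), which the statement does not require.
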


\begin{proof}
The key argument is that disjoint norm one elements $x_1,\dots x_k$ in a $\mathcal BL_pL_q$-Banach lattice $X$ (that we represent as a band in a $L_p(L_q)$-space with $L_p$-valued random norm $N$) generate a  $\ell_q^k$-subspace iff they have equal random norms: $N(x_1)=N(x_2)=\dots =N(x_k)$. 

Indeed the equality $\|x_1+x_2\|^q=\|x_1\|^q+\|x_2\|^q$ for disjoint vectors $x_1,x_2\in X$ means exactly that 
\[ \|N(x_1)^q+N(x_2)^q\|_{p/q}= \|N(x_1)^q\|_{L_{p/q}}+\|N(x_2)^q\|_{L_{p/q}}\]
which happens if and only if $N(x_1)^q$ and $N(x_2)^q$ are proportional (equality case in Minkowski inequality, if $p\ne q$ ); moreover  the coefficient of proportionality must be 1 since $\|x_1\|=\|x_2\|$.

a) It is clear that $L_p(\ell_q^n)$ belongs to $\mathcal BL_pL_q^{\ge n}$. Conversely given any system   $(x_1,\dots x_n)$ of $n$ disjoint  elements in $L_p(\ell_q^n)$ with the same random norm $\varphi$, any  element $f$ in $L_p(\ell_q^n)$ which is disjoint from $(x_1,\dots, x_n)$ must vanish (when viewed as a $\ell_q^n$-valued measurable function) on the support of $\varphi$, that is $N(f)$ is disjoint from $\ph$ (in $L_p$). Therefore we cannot complete the system $(x_1,\dots x_n)$ to a disjoint system spanning  a $\ell_q^{n+1}$-subspace.

b) First we remark that if $(y_1,\dots,y_n)$ is a system  of pairwise disjoint elements of a $\mathcal BL_pL_q$-lattice $X$  having the same base-support then we can find another  system  $(z_1,\dots,z_n)$ of disjoint vectors which generate a $\ell_q^n$-subspace. Indeed, set $z_j=\ph_j y_j$ with $\ph_j= (N(y_1)\wedge \dots\wedge N(y_n))/N(y_j)$.

Let $X$ be a $\mathcal BL_pL_q^{\ge n}$-lattice which  does not belong  to $\mathcal BL_pL_q^{\ge n+1}$. Let $Y$ be a base-band of $X$ which contains no $\ell_q^{n+1}$-sublattice. By hypothesis $Y$ contains a $\ell_q$-basis  $(x_1,\dots, x_n)$ consisting of positive, disjoint vectors. By the observation  above the base-band $Z$ generated by the system   $(x_1,\dots, x_n)$ contains no non zero element disjoint from all the $x_j$, $j=1\dots n$, otherwise it would contain an isometric copy of $\ell_q^{n+1}$ as subattice.Thus $Z$ coincides with the band generated by $(x_1,\dots, x_n)$. For the same reason  we cannot decompose any of the $x_j$'s in two disjoint, but not base-disjoint parts $x'_j,x''_j$ (that is the $x'_j$ are "fiber-atoms" in the sense of [HR]). It follows that $Z$ is lattice-isomorphic to  $L_p(A;\ell_q^n)$ where $A$ is the common support of the $N(x_j)$. This isomorphism from  $L_p(A;\ell_q^n)\approx L_p(A)^n$ onto $Z$ is simply given by $(f_1,\dots,f_n)\mapsto \sum_{j=1}^n {f_j \over N(x_j)}x_j$.

c) We show that for every subset $A$ of the support of $N(x)$ with positive measure we may find a system $(y_1,\dots,y_n)$ of disjoint vectors with $N(y_1)=\dots =N(y_n)=\chi_B N(x)$ where $B$ is a subset of  $A$ with non-zero measure. 

Indeed by hypothesis we may find disjoint norm one elements $x_1,\dots x_n$ in the base-band of $\chi_Ax$ which generate a $\ell_q^n$ subspace. Then $N(x_1)=\dots =N(x_n)=:\phi$ with $\mathrm{supp}\,\phi\subset \mathrm{supp}\,N(x)$. Let $B:= \mathrm{supp}\,\phi$ and $\psi:= {\chi_B \over \phi}N(x)$ (with the convention $0/0=0$) then put $y_j=\psi x_j$.

Now by a standard exhaustion argument we may find a system $(z_1,\dots,z_n)$ of disjoint vectors with $N(z_1)=\dots =N(z_n)=N(x)$.
\end{proof}

\begin{lemma}\label{base-disjointness-preservation} Let $1\le p\ne q<\infty$. Let $X,Y$ be two $\mathcal BL_pL_q$-Banach lattices. If either $p\ne 2, q\ne 1$ and  $p\not\in [q,2]$ (in the case  where $q<2$), or if $X$ is a $\mathcal BL_pL_q^{\ge 2}$-Banach lattice, then every linear isometry from $X$   to $Y$ preserves base-disjointness. \end{lemma}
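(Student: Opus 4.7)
I would reduce preservation of base-disjointness to a pair of norm equalities that are manifestly preserved by any linear isometry, and then deduce base-disjointness of the images from these equalities together with the structural hypotheses on $(p,q)$ or on $X$.

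First, I would observe that base-disjointness of $x,y\in X$ forces a strong pair of norm equalities. Representing $X$ as a band in some $L_p(L_q)$, the condition $N(x)\wedge N(y)=0$ means that at a.e.\ base-point $\omega$ at most one of $x(\omega),y(\omega)$ is nonzero, so $N(x\pm y)=N(x)+N(y)$ pointwise, and the $L_p$-disjointness of $N(x),N(y)$ yields
\[
\|x+y\|^p \;=\; \|x-y\|^p \;=\; \|x\|^p+\|y\|^p.
\]
Since $T$ is a linear isometry, these equalities pass at once to $(Tx,Ty)$ inside $Y$. It then suffices to show that these two equalities in $Y$ force $N(Tx)\wedge N(Ty)=0$. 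By a Downward L\"owenheim--Skolem reduction entirely parallel to Lemma \ref{sep-fib-atl}, I would replace $Y$ by a separable closed vector sublattice $Y_0$ containing $Tx,Ty$ which is itself a $\mathcal BL_pL_q$-lattice, and identify $Y_0$ with a band in some standard $L_p(\mu;L_q(\nu))$ via the isometric classification of separable $\mathcal BL_pL_q$-lattices \cite[p.~207]{HR}; and analogously I would pass to a separable $X_0\subset X$ containing $x,y$ (with $X_0\in\mathcal BL_pL_q^{\ge 2}$ when we are in Case B).

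For the first case ($p\ne 2$, $q\ne 1$, and $p\notin[q,2]$ when $q<2$), the plan is to invoke a suitable adaptation of the structure theorem of Greim--Raynaud \cite{GR}, as already recalled in the proof of Proposition \ref{DP-for-LpLq-isometries}, to produce a positive linear isometry $\widetilde T$ between the base $L_p$ spaces of $X_0$ and $Y_0$ satisfying $N(Tf)=\widetilde T N(f)$. The conditions in Case A are precisely those needed to isometrically detect the base $L_p$ fiber (through $\ell_p^2$-test subspaces) without requiring the sharper $L_q$-testing that would force full disjointness preservation. Since $p\ne 2$, Lamperti's theorem gives that the positive isometry $\widetilde T$ of $L_p$ is a lattice homomorphism, and then
\[
N(Tx)\wedge N(Ty)=\widetilde T N(x)\wedge \widetilde T N(y)=\widetilde T\bigl(N(x)\wedge N(y)\bigr)=0,
\]
which is the desired base-disjointness.

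For the second case ($X\in\mathcal BL_pL_q^{\ge 2}$), I would exploit the source hypothesis directly via Lemma \ref{fiber-dlmension}(c): in the base-support of $x$ choose two disjoint elements $u_1,u_2$ spanning an $\ell_q^2$-sublattice, and similarly $v_1,v_2$ in the base-support of $y$. Then $\mathrm{span}(u_1,u_2,v_1,v_2)$ is lattice-isometric to $\ell_p^2(\ell_q^2)$. Its image under $T$ is an isometric copy of $\ell_p^2(\ell_q^2)$ inside $Y$, and combining the $\ell_q^2$-fiber structure (which rules out the degenerate $L_p$-directions that caused trouble in Case A when $p\in\{2\}$ or $q=1$) with the $\ell_p^2$-equalities $\|Tx\pm Ty\|^p=\|Tx\|^p+\|Ty\|^p$ should force $\{Tu_1,Tu_2\}$ and $\{Tv_1,Tv_2\}$ to have mutually disjoint random norms, hence $Tx,Ty$ to be base-disjoint.

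\textbf{Main obstacle.} The hard point is the converse in Step 3. In Case A, one must verify that the Greim--Raynaud machinery, designed in \cite{GR} to yield full disjointness preservation under the conditions of Proposition \ref{DP-for-LpLq-isometries}, still yields the weaker formula $N(Tf)=\widetilde T N(f)$ under the milder hypothesis on $(p,q)$ stated here (where the roles of $p$ and $q$ are essentially swapped, since it is the outer $L_p$-structure we wish to recover rather than the inner $L_q$). In Case B, the difficulty is the finite-dimensional rigidity lemma: that an isometric copy of $\ell_p^2(\ell_q^2)$ in a $\mathcal BL_pL_q$-lattice, $p\ne q$, must split as an $\ell_p^2$-sum of base-disjoint $\ell_q^2$-fibers. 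This is essentially a finite-dimensional Lamperti-type fact that handles precisely the borderline exponents ($p=2$ and $q=1$) excluded from Case A.
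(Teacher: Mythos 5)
Your opening reduction is correct and matches the paper: base-disjointness of $x,y$ is equivalent to $x,y$ spanning $\ell_p^2$ isometrically (i.e. $\|x\pm y\|^p=\|x\|^p+\|y\|^p$), and this configuration is transported by $T$. The gaps are in how you get back from ``$Tx,Ty$ span $\ell_p^2$ in $L_p(L_q)$'' to base-disjointness. In Case A your route through a global intertwining isometry $\widetilde T$ with $N(Tf)=\widetilde T N(f)$ is not available: the Guerre--Raynaud structure theorem (Theorem 5.1 there, used in Proposition \ref{DP-for-LpLq-isometries}) requires hypotheses on $q$ and on the fiber dimension that fail under the Case A hypotheses of this lemma (e.g.\ $q=2$, $p>2$, or one-dimensional fibers, are allowed here), and establishing such a $\widetilde T$ would essentially presuppose that $T$ preserves base-disjointness -- the very thing to be proved. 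What is actually needed, and what the paper uses, is the \emph{local} rigidity statement \cite[Prop.~5.6]{GR}: under $p\ne 2$, $q\ne 1$, $p\notin[q,2]$ when $q<2$, any two elements of an $L_p(L_q)$-space spanning $\ell_p^2$ isometrically are automatically base-disjoint. Applied directly to the pair $(Tu,Tv)$ this finishes Case A with no global structure and no Lamperti step.

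In Case B the configuration you build does not reach the conclusion: $x$ need not lie in $\mathrm{span}(u_1,u_2)$, so even if $T(\mathrm{span}(u_1,u_2,v_1,v_2))$ were shown to split as an $\ell_p^2$-sum of base-disjoint $\ell_q^2$-fibers, this gives no control on $N(Tx)\wedge N(Ty)$ -- relating $N(Tx)$ to $N(Tu_1)\vee N(Tu_2)$ is again the statement being proved. The paper's proof instead arranges for the element $v$ \emph{itself} (or base-disjoint components of it) to be a member of an $\ell_q^2$-plane every unit vector of which spans $\ell_p^2$ with $u$, and then invokes \cite[Prop.~5.7]{GR} (your unproven ``finite-dimensional rigidity lemma'') to conclude that $Tu$ is base-disjoint from the image of that whole plane, hence from $Tv$. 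Producing such a plane containing $v$ requires a case analysis you do not have: when $B_v\ominus\band[v]$ has the same base-support as $v$ one takes a disjoint $w$ with $N(w)=N(v)$; when $\band[v]=B_v$ one must first split $v=v_1+v_2$ into disjoint pieces each generating $B_v$ as base-band (using the $\mathcal BL_pL_q^{\ge 2}$ hypothesis) and treat each piece separately; the general case mixes the two. Without this step, and without a proof or reference for the $\ell_p^2(\ell_q^2)$ rigidity at the borderline exponents $p=2$, $q=1$, $q<p<2$, Case B remains open.
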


\begin{proof} We may assume w.l.o.g. that $Y$ is a $L_p(L_q)$-space. It is easy to see that the analogue of Lemma \ref{re-dp-implies-comp-dp} is true, where we replace the words `disjointness preserving' by `base-disjointness preserving'. For this reason we may restrict our attention to the real Banach lattice case.

Assume that $u,v$ are two norm one base-disjoint elements in $X$. Then $Tu,Tv$ form a 
 $\ell_2^p$-basis in $Y$.  If  $p\ne 2, q\ne 1$ and  $p\not\in [q,2]$ (in the case  where $q<2$),  it results from \cite[Prop. 5.6]{GR} that $Tu,Tv$ are base-disjoint. 
 
 In the remaining cases where $p=2$, or $q=1$ or $q<p<2$, if now $X$ is a $\mathcal BL_pL_q^{\ge 2}$-Banach lattice we shall consider two particular subcases.
 
The first particular case is when the base-band $B_v$ generated by $v$ and the (ordinary) band generated by $v$ coincide on no sub-base-band of $v$ (in other words, $B_v\ominus\band[v]$ and $v$ have the same base-support). In this case we may find in $B_v\ominus\band[v]$  an element $w$ such that $N(w)=N(v)$. Then $(v,w)$ form a $\ell_q^2$-basis in $X$ and $(u,x)$  form a $\ell_p^2$-basis for every norm-one element $x\in\mathrm{span}\,[v,w]$. The same is true for $Tu,Tv,Tw$ in $Y$, since $T$ is an isometry. Hence by [GR, prop. 5.7], $Tu$ and $Tx$ are base-disjoint in $Y$ for any $x\in\mathrm{span}\,[v,w]$, in particular for $x=v$.

A second  particular subcase occurs when $\band[v]=B_v$. In this case  by Lemma \ref{fiber-dlmension} we can find disjoint elements  $v_1,v_2$ in $B_v$, each of them generating $B_v$ as base-band. We may w.l.o.g. assume that $v_1+v_2=v$: indeed if not replace $v_1$ by $v'_1=P_{v_1}v$ (the component of $v$ in the band generated by $v_1$) and $v_2$ by $v'_2=v-v'_1$; then $v'_1$ and $v'_2$ are disjoint, $\band[v'_1]=\band[v_1]$ and $\band[v'_2]\supset \band[v_2]$ which imply that each of $v'_1$, $v'_2$ generates $B_v$ as base-band. Now we prove separately that $Tu,Tv_1$ and $Tu,Tv_2$ are base disjoint, this will imply that $Tu$ and $Tv=Tv_1+Tv_2$ are also base-disjoint.  For proving that $Tu$ and $Tv_1$ are base-disjoint, we find $w_2$ with $\band[w_2]=\band[v_2]$ such that $N(w_2)=N(v_1)$ (simply set $w_2={N(v_1)\over N(v_2)}v_2$). Then $v_1,w_2$ are disjoint and span $\ell_q^2$, and $u$ is base-disjoint of any element $x\in\mathrm{span}\,[v_1,w_2]$ so that we can conclude as in the first case. The same reasoning works for proving that $Tu$ and $Tv_2$ are base-disjoint.

In the general case we can split $v$ as a sum $v'+v''$ of two base disjoint components, the first one falling into the first subcase above, the second one in the second subcase. Simply split $B_v$ into the base-band $B'_v$ generated by $B_v\ominus \band[v]$ and the complementary one $B''_v=B_v\ominus B'_v$, and define $v',v''$ as being the corresponding components of $v$. By the preceding reasonings, $Tu$ is base-disjoint from both $Tv'$ and $Tv''$ and thus from $Tv=Tv'+Tv''$.
\end{proof}

\begin{proposition} Let $1\le p\ne q<\infty$ with $q\ne 2$ and $q\not\in [p,2]$ if $p\le 2$. Then if $p>1$ the  class $\mathcal BL_pL_q^{\ge k}$ has property DPIU for all $k\ge 2$ while the class $\mathcal BL_1L_q^{\ge k}$ has DPIU for all $k\ge 3$. Moreover if $p>2,q>1$ or $q>2,p\ne1,2$, the class $\mathcal BL_pL_q$  has property DPIU. 
\end{proposition}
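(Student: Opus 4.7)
The plan is to reduce both assertions to Proposition~\ref{DP-for-LpLq-isometries} and Lemma~\ref{base-disjointness-preservation}. Throughout I use the axiomatizability of $\mathcal BL_pL_q$ from \cite{HR} so that $X_\U$ is again a $\mathcal BL_pL_q$-lattice; DPIU will then follow from the stronger claim that any linear isometry $T\colon X\to Y$ into an \emph{arbitrary} $\mathcal BL_pL_q$-lattice $Y$ preserves disjointness.

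\textbf{Stage~1: classes $\mathcal BL_pL_q^{\ge k}$.} Given $x,y\in X$ disjoint, I would first pass to a separable closed sublattice $X_0\subseteq X$ containing $\{x,y\}$ and still in $\mathcal BL_pL_q^{\ge k}$, by downwards L\"owenheim--Skolem applied to this axiomatizable subclass (the direct analogue of Lemma~\ref{sep-fib-atl}). The isometric classification of separable $\mathcal BL_pL_q$-lattices from \cite{HR} then writes $X_0$ as a countable base-band sum $\bigoplus_m X_0^{(m)}$, where each $X_0^{(m)}$ is a single $L_p(L_q)$-space with $L_q$-fiber of constant dimension $m\in\{k,k+1,\dots,\infty\}$. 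Splitting $x=\sum_m x^{(m)}$, $y=\sum_m y^{(m)}$ accordingly, disjointness of $x,y$ forces $x^{(m)}\perp y^{(m)}$ inside each summand, while $x^{(m)}$ and $y^{(m')}$ are base-disjoint for $m\ne m'$. Summand by summand, Proposition~\ref{DP-for-LpLq-isometries}---whose dimensional hypothesis $m\ge 2$ (resp.\ $m\ge 3$ if $p=1$) is exactly built into the choice of $k$---yields $Tx^{(m)}\perp Ty^{(m)}$; Lemma~\ref{base-disjointness-preservation}, whose ``source in $\mathcal BL_pL_q^{\ge 2}$'' clause is automatic here, yields $Tx^{(m)}\perp Ty^{(m')}$ for $m\ne m'$. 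Order continuity of $Y$ lets $\wedge$ distribute over the countable disjoint sums, so $|Tx|\wedge|Ty|=0$.

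\textbf{Stage~2: full class $\mathcal BL_pL_q$ under the stronger index hypotheses.} Both cases $p>2,\,q>1$ and $q>2,\,p\ne 1,2$ satisfy the index requirements of Lemma~\ref{base-disjointness-preservation}, so every isometry $T\colon X\to X_\U$ preserves base-disjointness globally on $X$. I would decompose $X$ as a base-band sum $X=X_1\oplus X_{\ge 2}$, where $X_1$ collects the one-dimensional fibers (a pure $L_p$-lattice) and $X_{\ge 2}\in\mathcal BL_pL_q^{\ge 2}$. For disjoint $x,y\in X$, write $x=x_1+x_2$, $y=y_1+y_2$ along this splitting; the four components $|x_1|,|x_2|,|y_1|,|y_2|$ are pairwise disjoint. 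Three of the six pairings are in fact base-disjoint in $X$ (the cross-band pairings, plus $x_1\perp y_1$ inside $X_1\cong L_p$), so Lemma~\ref{base-disjointness-preservation} preserves them under $T$; the remaining pair $x_2\perp y_2$ lies in $X_{\ge 2}$, where Stage~1 applied to $T|_{X_{\ge 2}}\colon X_{\ge 2}\to X_\U$ (the hypothesis $p>1$ of Stage~1 with $k=2$ being met in both subcases) preserves it. Hence $Tx_1,Tx_2,Ty_1,Ty_2$ are pairwise disjoint in $X_\U$, and summing gives $Tx\perp Ty$.

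The main obstacle lies in Stage~1, at the separable reduction: one needs $X_0$ not merely to be separable and in $\mathcal BL_pL_q^{\ge k}$, but to admit a controlled base-band decomposition into single-fiber-dimension pieces $X_0^{(m)}\simeq L_p(L_q)$, so that Proposition~\ref{DP-for-LpLq-isometries} can be applied summand-by-summand. Everything after this reduction is routine bookkeeping: within-summand disjointness from Proposition~\ref{DP-for-LpLq-isometries}, cross-summand disjointness from the base-disjointness preservation of Lemma~\ref{base-disjointness-preservation}, and order continuity of $X_\U$ to handle the countable combination.
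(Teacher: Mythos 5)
Your proposal is correct and follows essentially the same route as the paper: reduce to separable sublattices via downwards L\"owenheim--Skolem, decompose into base-disjoint $L_p(L_q)$ factors with controlled fiber dimension, apply Proposition~\ref{DP-for-LpLq-isometries} within each factor and Lemma~\ref{base-disjointness-preservation} across factors. The only (cosmetic) difference is in the ``moreover'' case, where you split off the one-dimensional-fiber base-band $X_1$ globally before the separable reduction, while the paper performs the separable reduction first and then treats the one-dimensional-fiber summands directly, noting that there base-disjointness coincides with disjointness.
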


\begin{proof} We show that a linear isometry $T$ from a member  $X$ of the class under consideration 
($\mathcal BL_pL_q^{\ge k}$  with $k=1,2$ or $3$, depending on $p,q$) into a $L_p(L_q)$-Banach lattice $Y$ is disjointness preserving. We consider disjoint elements $u,v$ of $X$ and want to show that their images $Tu,Tv$ are disjoint. By Lemma \ref{fiber-dlmension} we may find disjoint elements $(x_1,\dots x_k)$ having the same base-support as $|u|+|v|$ and spanning $\ell_q^k$. There is a  separable sublattice $X_0$ of the base-band generated by $u$ and $v$ in $X$ which contains the pair $\{u,v\}$ and the system  $\{x_1,\dots x_k\} $and is itself a  $\mathcal BL_pL_q$-Banach lattice : this can be seen by an application of the downwards L\"owenheim-Skolem theorem (or directly like in the proof of Lemma \ref{sep-fib-atl}).  By Lemma \ref{fiber-dlmension} again, $X_0$ is still a $\mathcal BL_pL_q^{\ge k}$-Banach lattice ($k$=1,2,3). Thus we may restrict our attention to the case where $X$ is separable. Then $X$ can be represented as a $p$-direct sum $\bigoplus_n L_p(\Omega_n,\A_n,\mu_n; L_q(\Omega'_n,\A'_n,\mu'_n))$ where for each $n$, $L_p(\Omega_n,\A_n,\mu_n)$ and $L_q(\Omega'_n,\A'_n,\mu'_n)$ are separable. The hypothesis on $X$ means that for all $n$, $\dim L_q(\Omega'_n,\A'_n,\mu'_n)\ge k$, $k=1,2$ or $3$ depending on $p,q$.

Let $T_n$ be the restriction of the isometry $T$ to the factor $X_n=L_p(\Omega_n,\A_n,\mu_n; L_q(S_n,\Sigma_n,\nu_n))$. By Proposition \ref{DP-for-LpLq-isometries} if $\dim L_q(\Omega'_n,\A'_n,\mu'_n)\ge 2$  (or $\ge 3$ in the case $p=1$) this isometry is disjointness preserving. On the other hand if $k\ge 2$ the isometry $T$ is base-disjointness preserving by Lemma \ref{base-disjointness-preservation}. Since the factors  $X_n$ are base-disjoint and $T$ preserves base-disjointness, the images $T(X_n)$ are pairwise base-disjoint  and $T$ is disjointness-preserving. This proves the proposition in the cases $k=2,3$.

In the case $k=1$, if $p>2,q>1$ or $q>2,p\ne1,2$ then, again by Lemma \ref{base-disjointness-preservation}, $T$ preserves base-disjointness even if in the factor $X_n$, the fiber $L_q(\Omega'_n,\A'_n,\mu'_n)$ is 1-dimensional. In this case $X_n=L_p(\Omega_n,\A_n,\mu_n)$, and base-disjointess in $X_n$ is the same as disjointness. Hence the isometry $T_n$ is also disjointness preserving and we conclude as before.
\end{proof}

\begin{proposition}\label{Axiomatizability}
The class $\mathcal BL_pL_q^{\ge n}$-Banach lattices is axiomatizable.
\end{proposition}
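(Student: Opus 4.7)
The plan is to exploit the already-established axiomatizability of $\mathcal BL_pL_q$ from \cite{HR} and reduce the problem to showing that the extra condition defining $\mathcal BL_pL_q^{\ge n}$ -- that every base-band contains an isometric copy of $\ell_q^n$ -- is preserved under ultraproducts and ultraroots. By Lemma \ref{fiber-dlmension} c), this extra condition is equivalent to the following intrinsic one: \emph{for every nonzero $x \in X$ there exist pairwise disjoint $y_1, \ldots, y_n \in X$ with the same base-support as $x$ which span a $\ell_q^n$-sublattice.} Each ingredient of this statement is a closed equational condition in the Banach-lattice language: ``$y_k$ is in the base-band of $x$'' translates to the universal implication that for every $z \in X$, if $|z| \wedge |x| = 0$ and $\|z+x\|^p = \|z\|^p + \|x\|^p$, then $|z| \wedge |y_k| = 0$ and $\|z+y_k\|^p = \|z\|^p + \|y_k\|^p$; pairwise disjointness is $y_j \wedge y_k = 0$; and spanning a $\ell_q^n$-sublattice amounts to the norm identity $\|\sum a_j y_j\|^q = \sum |a_j|^q \|y_j\|^q$ for all scalars.

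For closure under ultraproducts, let $X_i \in \mathcal BL_pL_q^{\ge n}$ and $\xi = [x_i]_\U$ be nonzero in $\prod_\U X_i$. For $i$ in a set of $\U$ we may assume $\|x_i\| > 0$, apply Lemma \ref{fiber-dlmension} c) in $X_i$ to produce disjoint $y_{1,i}, \ldots, y_{n,i}$ with the same base-support as $x_i$ and spanning $\ell_q^n$. The classes $\eta_k := [y_{k,i}]_\U$ are then pairwise disjoint in $\prod_\U X_i$; the norm identities characterizing $\ell_q^n$-span pass through the ultraproduct, and so do the equational conditions expressing ``same base-support as $\xi$''.

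For closure under ultraroots, assume $X_\U \in \mathcal BL_pL_q^{\ge n}$ and fix a nonzero $x \in X$. Since $D_X(x) \neq 0$ in $X_\U$, we obtain disjoint $\eta_1, \ldots, \eta_n \in X_\U$ with the same base-support as $D_X(x)$ and spanning $\ell_q^n$. Writing $\eta_k = [y_{k,i}]_\U$, for every $\eps > 0$ there is $U_\eps \in \U$ such that, for $i \in U_\eps$, the system $(y_{k,i})_{k=1}^n$ in $X$ satisfies all four conditions (pairwise disjointness, equal norms, $\ell_q^n$-identity, membership in the base-band of $x$) to within $\eps$. Since $X$ is a $\mathcal BL_pL_q$-lattice, we may round such an approximate configuration to an exact one by three perturbations, each of order $\eps$: (i) disjointify via band projections in $X$; (ii) rescale along fibers so that the random norms become equal (which, by the equality case of Minkowski's inequality in $L_p(L_q)$, is precisely what ``$\ell_q^n$ with all $\|y_k\|$ equal'' means); (iii) project onto the base-band of $x$ in $X$.

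The main obstacle is step (iii): each projection may degrade the other conditions slightly, so the rounding has to be balanced so that the four conditions hold simultaneously rather than one at a time. A conceptually cleaner route, which dispenses with this iterative bookkeeping, is to rephrase the defining condition as a single sentence of continuous logic of the form
\[
\sup_{\|x\|\le 1}\ \inf_{\|y_1\|,\ldots,\|y_n\|\le 1}\ F(x,y_1,\ldots,y_n)\ =\ 0,
\]
where $F$ is a continuous, bounded combinatorial penalty that vanishes exactly on witnesses of the property (built from the modulus, $\wedge$, the norm, and the closed inequalities defining base-disjointness and $\ell_q^n$-span). Since $F$ is uniformly continuous on bounded balls, this sentence has the same truth value in a structure and in any of its ultraroots, so closure under ultraroots is automatic from the general metamathematics of continuous logic.
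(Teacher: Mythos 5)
Your ultraproduct half is close to the paper's argument, but it contains an imprecision that matters: ``$y_k$ has the same base-support as $x$'' is \emph{not} an equational condition that passes through ultraproducts. As you yourself formulate it, it is a universal implication (``for every $z$, if $z$ is base-disjoint from $x$ then $z$ is base-disjoint from $y_k$''), and implications between closed conditions are exactly the kind of statement that ultraproducts do not preserve: an element $[z_i]_\U$ can be base-disjoint from $[x_i]_\U$ in the limit while no $z_i$ is base-disjoint from $x_i$. The paper sidesteps this by invoking the proof of Lemma \ref{fiber-dlmension} c) to choose witnesses with \emph{literally equal random norms}, $N(x_j^\alpha)=N(x_\alpha)$; the identity $N(\xi_j)=[N(x_j^\alpha)]_\U=[N(x_\alpha)]_\U=N(\xi)$ then forces equality of base-supports in the ultraproduct. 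If you replace ``same base-support'' by ``equal random norms'' throughout, your ultraproduct argument becomes correct and is essentially the paper's.

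The ultraroot half has a genuine gap. Your direct route requires converting approximate witnesses in $X$ into exact ones, and you acknowledge you have not balanced the three perturbations; the continuous-logic reformulation does not rescue this. First, the base-support condition is a universal implication, hence not a formula of continuous logic, so there is no admissible penalty $F$ ``vanishing exactly on witnesses'' of that condition without further structural work. Second, even granting such an $F$, the sentence $\sup_x\inf_y F=0$ only asserts the existence of \emph{approximate} witnesses in every model; identifying the class of its models with $\mathcal BL_pL_q^{\ge n}$ is precisely the rounding problem you set out to avoid, now hidden rather than solved. The paper's proof of closure under ultraroots is entirely different and avoids rounding: it argues by contraposition, using Lemma \ref{fiber-dlmension} b) to show that if $X\notin\mathcal BL_pL_q^{\ge n}$ then $X$ has a base-band lattice-isometric to $L_p(\ell_q^k)$ for some $k<n$; its ultrapower is a base-band of $X_\U$ lattice-isometric to $(L_p)_\U(\ell_q^k)$, which by Lemma \ref{fiber-dlmension} a) contains no $\ell_q^{k+1}$ in any base-band, so $X_\U\notin\mathcal BL_pL_q^{\ge n}$. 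That structural dichotomy is the missing idea in your argument.
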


\begin{proof} 
The class  of $\mathcal BL_pL_q$-Banach lattices itself is axiomatizable  \cite{HR}.

If for some $k<n$, $X\in \mathcal BL_pL_q$ contains a base-band $Y$ isomorphic to $L_p(\ell_q^k)$ then for every ultrafilter $\U$, the $\mathcal BL_pL_q$-Banach lattice $X_\U$ contains the base-band $Y_\U$ which isomorphic to $L_p(\ell_q^k)_\U$, which is itself isomorphic to $(L_p)_\U(\ell_q^k)$. Thus by Lemma 
\ref{fiber-dlmension} if $X$ does not belong to $\mathcal BL_pL_q^{\ge n}$, neither does $X_\U$, and the class $\mathcal BL_pL_q^{\ge n}$ is closed under ultraroots.

Let us show that the class $\mathcal BL_pL_q^{\ge n}$ is closed under ultraproducts. Let $(X_\alpha)_{\alpha\in \mathbb A}$ be a family of members of  $BL_pL_q^{\ge n}$, and $X=\Pi_\U X_\alpha$ an ultraproduct of this family. We may consider each $X_\alpha$ as a band in a space $\Lambda_\alpha=E_\alpha(F_\alpha)$, where $E_\alpha$ is an $L_p$-space and $F_\alpha$ a $L_q$-space. Let  $N_\alpha:\Lambda_\alpha\to E_\alpha$ be the corresponding  random norm. Then by \cite{LR}, $X$ may be represented as a band in $E(F)$, where $E=\Pi_\U X_\alpha$ and $F$ is some $L_q$ space, in such a way that the random norm $N:E(F)\to E$ coincides on $X$ with the ultraproduct map $\Pi_\U N_\alpha$. 

Let $Y$ be a nontrivial base-band in $X$, we want to prove that $Y$ contains a sublattice isomorphic to $\ell_q^n$. We may assume that $Y$ is the base-band generated by a single nonzero element $\xi$. Let $\xi=[x_\alpha]_\U$ be a representation of $\xi$ by a bounded  family  in $\prod X_\alpha$, consisting of nonzero elements. Let $Y_\alpha$ be the base-band generated by $x_\alpha$ in $X_\alpha$. By Lemma \ref{fiber-dlmension} we can find a system $(x_1^\alpha,\dots , x_n^\alpha)$ of disjoint elements of $X_\alpha$ generating a $\ell_q^n$ sublattice, each $x^j_\alpha$ generating $X_\alpha$ as base-band. In fact we may find such a system with $N(x_1^\alpha)=\dots N(x_n^\alpha)=N(x_\alpha)$. 
Now set $\xi_j=[x_j^\alpha]_\U$, $j=1,\dots n$, then $N(\xi_j)=[N(x_j^\alpha)]_\U=[N(x_\alpha)]_\U=N(x)$, which shows that the $\xi_j$ have the same base-support as $\xi$ and thus belong to the base-band $Y$, the $\xi_j$ are disjoint and span a $\ell_q^n$-sublattice of $Y$.
 \end{proof}
 
Let us finally give a summary of our present knowledge about Banach axiomatizability for classes $\mathcal BL_pL_q$:

\begin{theorem} 
\parindent=0pt{
a) If $p,q\in ]1,2[\cup ]2,+\infty[$, $p\ne q$  then the class $\mathcal BL_pL_q$ is axiomatizable in the language of Banach spaces; so are the classes $BL_pL_q^{\ge n}$ for all $n\ge 1$.

b) If $1<p<\infty$ then the class $\mathcal BL_pL_1^{\ge 2}$, and more generally any class $\mathcal BL_pL_1^{\ge n}$, $n\ge 2$, is axiomatizable in the language of Banach spaces.

c) If $q\ne 2$ then the class $\mathcal BL_2L_q^{\ge 2}$, and more generally any class $\mathcal BL_2L_q^{\ge n}$, $n\ge 2$, is axiomatizable in the language of Banach spaces.

d) If $q>2$ the class $\mathcal BL_1L_q^{\ge 3}$, and more generally any class $\mathcal BL_1L_q^{\ge n}$, $n\ge 3$, is axiomatizable in the language of Banach spaces.

e) If $1<p<\infty $ the class $\mathcal BL_pL_2$ is axiomatizable in the language of Banach spaces.}
\end{theorem}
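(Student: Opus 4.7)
The theorem assembles the axiomatizability results accumulated in this paper; the plan is to invoke Theorem \ref{axiomatizability1} case by case, combining the Banach-lattice axiomatizability of each class with the DPIU property established either in section \ref{r-convex-BL} or in the preceding proposition of this section.

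For part (a) with $p,q \in (2,\infty)$ and $p \neq q$, the class $\mathcal{BL}_pL_q$ is Banach-lattice axiomatizable by \cite{HR}, and its members are exactly $\min(p,q)$-convex and $\max(p,q)$-concave with strictly monotone norm; hence Corollary \ref{DPisometries} yields DPIU, and Theorem \ref{axiomatizability1} gives Banach axiomatizability. For the subclasses $\mathcal{BL}_pL_q^{\geq n}$ I use Proposition \ref{Axiomatizability} for the Banach-lattice axiomatizability, while DPIU is inherited since it is a property of individual lattices. For $p,q \in (1,2)$ with $p \neq q$, I would argue by duality: these spaces are superreflexive, so ultraproducts commute with dualization canonically \cite{H}, and the classes consist of duals of the already-handled $\mathcal{BL}_{p'}L_{q'}$ (and $\mathcal{BL}_{p'}L_{q'}^{\geq n}$) classes with $p',q' > 2$.

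For parts (b), (c), and (d) I would apply Theorem \ref{axiomatizability1} directly: Proposition \ref{Axiomatizability} provides Banach-lattice axiomatizability of each class $\mathcal{BL}_pL_q^{\geq n}$, and the preceding proposition of this section supplies DPIU exactly in the listed ranges ($p > 1$ and $n \geq 2$ in (b) and (c); $n \geq 3$ in (d)). Order continuity is automatic since these lattices are $s$-concave for $s = p \vee q < \infty$.

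Part (e) splits into two subcases. For $p = 2$, $L_2(L_2) \cong L_2$ as Banach lattices, so $\mathcal{BL}_2L_2$ is simply the class of Hilbert spaces, which is Banach-axiomatizable via the parallelogram identity. The case $p \neq 2$, $p \in (1,\infty)$, is the main technical obstacle, since $q = 2$ is excluded from every DPIU statement (rotations within a Hilbert fiber clearly break disjointness preservation) and Theorem \ref{axiomatizability1} does not apply. My plan here is to establish closure under Banach ultraroots by a direct analysis: represent each $\mathcal{BL}_pL_2$ lattice as $L_p(\mu; H_\omega)$ for a measurable field of Hilbert spaces, use the classical fact that ultraproducts of such spaces remain of the same form, and exploit the rigidity of $L_p(H)$ isometries for $p \neq 2$ at the level of an adapted paving in the spirit of Theorem \ref{L-inverse ultraprod}, bypassing DPIU by tracking how the local Hilbert-fiber structure of the ultraproduct descends to the ultraroot. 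This final step in (e) is where I anticipate the real work.
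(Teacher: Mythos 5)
Your plan for parts (b), (c), (d), and for the ``pure'' cases of (a), matches the paper's proof, which is exactly the combination of Proposition \ref{Axiomatizability}, the DPIU proposition of Section \ref{sec:other}, Theorem \ref{axiomatizability1}, and duality for $1<p,q<2$. However, part (a) as you have written it omits the mixed-exponent cases $p\in(1,2),\ q\in(2,\infty)$ and $p\in(2,\infty),\ q\in(1,2)$, which are included in the hypothesis $p,q\in\,]1,2[\,\cup\,]2,+\infty[$. For these, $L_p(L_q)$ is only exactly $\min(p,q)$-convex with $\min(p,q)<2$, so Corollary \ref{DPisometries} does not apply, and duality maps one mixed case to the other, so it cannot be used to reduce to the case you did treat. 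You need the last assertion of the DPIU proposition of Section \ref{sec:other} (``if $p>2,q>1$ or $q>2,p\ne 1,2$, the class $\mathcal BL_pL_q$ has property DPIU''), which covers both mixed cases directly; this is what the paper invokes (``DPIU property when $p$ or $q>2$'').

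The more serious gap is part (e) for $p\ne 2$, which you explicitly leave as ``the real work.'' The sketch you give (measurable fields of Hilbert spaces, rigidity of $L_p(H)$ isometries, descending the fiber structure to the ultraroot) is not carried out and, as stated, would have to confront exactly the failure of disjointness preservation that you correctly identify; nothing in the paper's machinery (Theorem \ref{L-inverse ultraprod}, the Lacey function, etc.) applies without DPIU. The paper's actual argument is entirely different and much shorter: since members of $\mathcal BL_pL_2$ are reflexive for $1<p<\infty$, a Banach ultraroot $X$ of $L\in\mathcal BL_pL_2$ embeds isometrically in $L$ as the range of a contractive projection (via the diagonal embedding and the weak$^*$-limit map $Q$, as in the last paragraph of the proof of Theorem \ref{L-inverse ultraprod}, with no appeal to disjointness), and then one invokes the characterization in \cite{R} of ranges of contractive projections in $L_p(H)$: such a range is again linearly isometric to a $\mathcal BL_pL_2$ space. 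Closure under ultraproducts is already known from the Banach-lattice axiomatizability of the class. Without this contractive-projection input, your part (e) is not a proof.
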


\begin{proof}

\parindent=0pt
{a) results from  Prop. \ref{Axiomatizability} and DPIU property when $p$ or $q>2$, and is obtained by duality when $1<p,q<2$.

b)   c)  and d) result from DPIU property and  Prop. \ref{Axiomatizability};

e) results from the analysis of contractive projections in $L_p(H)$, where $H$ is an Hilbert space, see \cite{R}.}
\end{proof}

The main question  remaining open in the context of $\mathcal BL_pL_q$-spaces is:

\begin{question}
 Is the class $\mathcal BL_1L_q$ axiomatizable in the language of Banach spaces, for $1<q\le 2$?
\end{question}


\begin{thebibliography}{99}

\bibitem{AA} Y. A. Abramovich and C. D. Aliprantis, {\it An invitation to Operator theory}. Graduate Studies in Mathematics  {\bf 50}, American Mathematical Society, Providence, RI (2002). 

\bibitem{AB} C. D. Aliprantis and O. Burkinshaw, {\it Positive operators}. Springer Verlag, Dordrecht (2006).

\bibitem{BBHU} I. Ben Yaacov, A. Berenstein, C. W. Henson, and A. Usvyatsov, {\it Model theory for metric structures}. Model Theory with Applications to Algebra and Analysis, Vol. 2,  315--427, London Math. Soc. Lecture Note Ser. {\bf 350}, Cambridge Univ. Press (2008).

\bibitem{BB} K. Boulabiar and G. Buskes, {\it Polar decomposition of disjointness preserving operators}. Proc. Amer. Math. Soc. {\bf 132} (2004), no. 3, 799-806.

\bibitem{CZ} W. J. Claas and  A. C. Zaanen, {\it Orlicz lattices}. 
Special issue dedicated to W\l adys\l aw Orlicz on the occasion of his seventy-fifth birthday. 
Comment. Math. Special Issue {\bf 1} (1978), 77-93. 

\bibitem{F} D. H. Fremlin, {\it Measure Theory, vol.3: Measure algebras}. Torres Fremlin, Colchester (2002).

\bibitem{GH}  J. J. Grobler and C. B. Huijsmans, {\it Disjointness preserving operators on complex Riesz spaces}. Positivity {\bf 1} (1997), 155-164.

\bibitem{GR}  S. Guerre and Y. Raynaud, {\it Sur les isom\'etries de $L_p(X)$ et le th\'eor\`eme ergodique vectoriel}.  Can. J. Math. {\bf 140} (1988),  360-391.

\bibitem{H}  S. Heinrich, {\it Ultraproducts in Banach Spaces Theory}.  J. Reine
Angew. Math. {\bf 313} (1980), 72-104.

\bibitem{CWH} C. W. Henson and L.C. Moore Jr., {\it Nonstandard analysis and the theory of Banach spaces}. Nonstandard analysis--recent developments (Victoria, B.C., 1980), 27-112,  Lecture Notes in Math. {\bf 983}, Springer, Berlin, 1983.

\bibitem{HI} C. W. Henson and J. Iovino, {\it Ultraproducts in Analysis}. Analysis
and Logic (Mons 1997), 1--113, London Math. Soc. Lecture Note Ser. {\bf 262}, Cambridge Univ. Press (2003).

\bibitem{HR} C. W. Henson and Y. Raynaud, {\it On the theory of $L_p(L_q)$ Banach lattices}.  Positivity {\bf 11} (2007), 201-230.

\bibitem{HR-11} C. W. Henson and Y. Raynaud, {\it Quantifier elimination in the theory of $L_p(L_q)$-Banach lattices}. J. Log. Anal. {\bf 3} (2011), Paper 11.

\bibitem{JKP}  J. E. Jamison, A. Kami\'nska, and Pei-Kee Lin, {\it Isometries of Musielak-Orlicz spaces II}.  Studia Math. {\bf 104} (1993),  75-89.

\bibitem{JLS}  W. B. Johnson, J. Lindenstrauss, and G. Schechtman, {\it Banach spaces determined by their uniform structures}. Geom. Funct. Anal. {\bf 6} (1996), 430-470.

\bibitem{L} H. E. Lacey, {\it Local unconditional structure in Banach spaces}. Banach spaces of analytic functions, Lecture Notes in Math. {\bf 604} (1977), 44-56.

\bibitem{La} J. Lamperti, {\it On the isometries of certain function-spaces}. Pacific J. Math. {\bf 8} (1958) 459-466. 

\bibitem{LR}  M. Levy and Y. Raynaud,  {\it Ultrapuissances des espaces $L^p(L^q)$}.  C. R. Acad. Sci. Paris S\'er. I Math. {\bf 299} (1984), no. 3, 81-84. 

\bibitem{LT} J. Lindenstrauss and L. Tzafriri, {\it Classical Banach Spaces II: Function
Spaces}. Ergebnisse der Math. {\bf 97}, Springer Verlag (1979).

\bibitem{M76} M. Meyer, {\it Le stabilisateur d'un espace vectoriel r\'eticul\'e}, C. R. Acad. Sci. Paris S\'er. A {\bf 283} (1976) 249-250.

\bibitem{M81} M. Meyer, {\it Les homomorphismes d'espaces vectoriels r\'eticul\'es complexes}, C. R. Acad. Sci. Paris S\'er. I Math. {\bf 292} (1981) 793-796.

\bibitem{MN} P.~Meyer-Nieberg, \emph{Banach Lattices}. Springer-Verlag, Berlin Heidelberg 1991.

\bibitem{P}  L. P. Poitevin,  \emph{ Model Theory of Nakano Spaces}. Ph.D. thesis, University of Illinois at Urbana-Champaign, 2006.

\bibitem{PR}  L. P. Poitevin and Y. Raynaud, {\it Ranges of positive contractive projections in Nakano spaces.}  Indag. Math. {\bf 19} (2008), 441-464.

\bibitem{R}  Y. Raynaud, {\it The range of a contractive projection in $L_p(H)$.}  Rev. Mat.
Complut. {\bf 17} (2004), 485-512.

\bibitem{Ri} M. J. Rivera, {\it On the classes of $\mathcal L^\lambda$, quasi-$\mathcal L^E$ and $\mathcal L^{\lambda,g} spaces$}. Proc. Amer. Math. Soc. {\bf 133} (2005), no. 7, 2035-2044.

\bibitem{W81} W. Wnuk, {\it On a representation theorem for convex Orlicz lattices}. Bull. Acad. Polon. Sci. S\'er. Sci. Math. {\bf 28} (1980), no. 3-4, 131-136 (1981).

\bibitem{W84} W. Wnuk, {\it Representations of Orlicz lattices}. Dissertationes Math. (Rozprawy Mat.) {\bf 235} (1984). 



\end{thebibliography}
\end{document}